\newcommand{\mfn}{\mathfrak{N}}
\newcommand{\mcb}{\mathcal{B}}
\newcommand{\mcf}{\mathcal{F}}
\newcommand{\mcg}{\mathcal{G}}
\newcommand{\mcl}{\mathcal{L}}
\newcommand{\mcn}{\mathcal{N}}
\newcommand{\mcx}{\mathcal{X}}
\newcommand{\hmcb}{\hat{\mcb}}
\newcommand{\mvert}{\,|\,}
\newcommand{\tQ}{\tilde{Q}}
\newcommand{\atwo}{\langle2\rangle}
\newcommand{\supp}{\mathrm{supp}}
\newcommand{\Tau}{\mathrm{T}}
\newcommand{\Eta}{\mathrm{H}}
\newcommand{\B}{\mathbb{B}}
\newcommand{\G}{\mathbb{G}}
\newcommand{\X}{\mathbb{X}}
\newcommand{\Y}{\mathbb{Y}}
\newcommand{\hbit}{\hspace*{1.5pt}}
\newcommand{\tsum}{{\textstyle \sum}}
\newcommand{\bigabs}[1]{\bigl|#1\bigr|}
\newcommand{\Bigabs}[1]{\Bigl|#1\Bigr|}
\newcommand{\biggabs}[1]{\biggl|#1\biggr|}
\newcommand{\eps}{\varepsilon}
\newcommand{\gibbs}{\mathrm{Gibbs}}
\newcommand{\pop}{\mathrm{Pop}}
\newcommand{\eqinlaw}{\overset{\text{d}}{=}}
\newcommand{\inlawto}{\overset{\text{d}}{\longrightarrow}}
\newcommand{\bignorm}[1]{\bigl\| #1 \bigr\|}
\newcommand{\runder}{r_*}
\newcommand{\rover}{\hat{r}_*}
\newcommand{\rstarq}{r_*,q}
\begin{document}


\title{\hspace*{2mm}\\[-16mm] Stein's Method for Spatial Random Graphs}

\author{Dominic Schuhmacher and Leoni Carla Wirth\footnote{Supported by Deutsche Forschungsgemeinschaft GRK 2088.}\\[2mm]
Institute for Mathematical Stochastics\\
University of G\"ottingen
}

\maketitle

\begin{abstract}
  In this article, we derive Stein's method for approximating a spatial random graph by a generalised random geometric graph, which has vertices given by a finite Gibbs point process and edges based on a general connection function. Our main theorems provide explicit upper bounds for integral probability metrics and, at improved rates, a recently introduced Wasserstein metric for random graph distributions. The bounds are in terms of a vertex error term based on the Papangelou kernels of the vertex point processes and two edge error terms based on conditional edge probabilities. In addition to providing new tools for spatial random graphs along the way, such as a graph-based Georgii--Nguyen--Zessin formula, we also give applications of our bounds to the percolation graph of large balls in a Boolean model and to discretising a generalised random geometric graph.
\end{abstract}

\section{Introduction}

Spatial random graphs provide an important framework for the analysis of relations and interactions in networks. Their underlying spatial structure makes them particularly attractive, on the one hand because they are well-suited for modelling real-world systems, such as transportation networks or social networks, on the other hand because they give rise to many interesting theoretical problems, such as percolation and coverage.

Arguably the best known and most widely studied spatial random graph is the \textit{random geometric graph} introduced by \textcite{gilbert1961} in the framework of communication network modelling. A random geometric graph consists of vertices given by a Poisson point process and edges that connect two vertices whenever their spatial distance is smaller than some threshold $r\in\R_+$. The (spatially) dependent edges in spite of an overall simple construction have lead to a wealth of interesting results and applications; 
see \textcite{penrose2003}.

The simple construction also allows to express many properties of the random geometric graph in terms of U-statistics of a Poisson point process. For example, \textcite{reitzner2013} and \textcite{reitzner2017} obtain general Central Limit Theorems including convergence rates, for such properties of random geometric graphs. Their work builds on the Malliavin--Stein approach by \textcite{nourdin2009}.

To better capture the complex nature of real-world networks, more general spatial random graphs have been studied. In this context the asymptotic behaviour of (properties of) these spatial graphs is of particular interest. For example,  \textcite{penrose2016} studies the connectivity of \emph{soft random geometric graphs} as the number of vertices tends to infinity. Soft random geometric graphs generalise random geometric graphs by creating edges according to probabilities supplied by a more or less general connection function that depends on the locations of the two vertices to be connected. 
Another class of spatial random graphs are the \emph{spatial inhomogeneous random graphs} studied in \textcite{vandh2021}. The authors consider graphs of fixed size, assigning a uniformly chosen location and additional random weights to each vertex and connecting two vertices according to a connection function that depends on their spatial distance and their weights. Then local properties and limits of these graphs in the sense of local weak convergence are investigated. 
However, both studies focus on the influence of the vertex locations on the edge structure and properties thereof and do not take into account the (change in) vertex locations themselves. 

In spite of these works, little is known about the rates for the graph convergence itself and corresponding approximation results. An interesting contribution in this direction for non-spatial random graphs was made by \textcite{ReinertRoss2019}, who studied the convergence of exponential random graphs by viewing them as a vector of (dependent) Bernoulli random variables. By an application of Stein's method, the authors obtain explicit bounds on the approximation error between an exponential random graph and an Erd\H{o}s--R\'enyi graph.

In this paper, we obtain approximation results and corresponding convergence rates for spatial random graphs. Our results are based on deriving Stein’s method for approximation by \emph{generalised random geometric graphs}, whose vertices are given by a Gibbs process and whose edges are based on a general connection function depending on the locations of the vertices. This graph model includes the abovementioned soft random geometric graphs. We propose a novel approach by viewing a graph as a pair of point processes. This enables us to focus on the interplay of the geometry and the edge structure of a graph. Furthermore, it allows us to build upon recent research on Stein’s method for Poisson point processes, see \textcite{sx2008} and Gibbs processes, see \textcite{stucki2014}. 

In the special case of two soft random geometric graphs $G_1,G_2$ on some compact space $\mcx\subset\R^d$ with vertex intensity functions $\lambda_1,\lambda_2:\mcx\to\R_+$ and edge connection functions $\kappa_1,\kappa_2:\mcx\times\mcx\to [0,1]$, a direct application of our main Theorem~\ref{thm: general approx result} yields for any real-valued measurable function $f$ on the graph space $\G$ (see Section~\ref{sec:setup})
\begin{align*} 
    \bigl| \E\bigl(f(G_1)\bigr)-\E\bigl(f(G_2)\bigr)\bigr| &\leq 2\norm{f}_{\infty} \int_{\mcx}\bigl|\lambda_1(x)-\lambda_2(x)\bigr| \, \diff x  \notag\\
    &\hspace*{4mm} {} + \norm{f}_{\infty} \int_{\mcx}\int_{\mcx}  \bigabs{\kappa_1(x,y)-\kappa_2(x,y)} \,\lambda_1(x)\,\lambda_2(y) \, \diff x \, \diff y.
\end{align*}
This result implies a general upper bound on the distance between the distributions of two soft random geometric graphs in terms of any integral probability metric. Considering, more specifically, a suitable Wasserstein metric, we can improve the resulting convergence rates further, 
see Theorem~\ref{thm: Poisson approx in Wasserstein}.

The plan of the paper is as follows. In Section~\ref{sec:setup}, we introduce the necessary notation and definitions, in particular we briefly discuss Gibbs processes and the GOSPA metric for (deterministic) spatial graphs. Furthermore, we define what we mean by a generalised random geometric graph and study a related graph birth-and-death process. Section~\ref{sec: gnz for srg} discusses an important tool of our proofs, the Georgii-Nguyen-Zessin formula for spatial random graphs, and illustrates its utility in two applications. In Section~\ref{sec: Stein method}, we turn to Stein's method. After a brief overview of the basic ideas, we start by constructing a coupling of two generalised random geometric graphs in Section~\ref{ch: coupling}. This coupling is one of the main ingredients to develop Stein’s method for spatial random graph distributions and is utilised in Section~\ref{ssec: bounds for general IPM} to obtain upper bounds on the distance between two spatial random graph distributions in terms of a general integral probability metric. In Section~\ref{ssec: bounding stein factors}, we choose specifically the Wasserstein metric w.r.t.\ the GOSPA metric in order to improve the Stein factors in the upper bound.
Finally, Section~\ref{sec: applications} provides two applications, giving asymptotic results and convergence rates for the percolation graph of large balls in the Boolean model and for the discretisation of a generalised random geometric graph.

\section{Setup and Notation}  \label{sec:setup}

Let $\mcx\subset \R^d$ be compact and denote by $\mathfrak{N}=\mathfrak{N}(\mcx)$ the space of finite counting measures on $\mcx$ that are \emph{simple} (i.e.\ do not have multi-points). We identify $\xi \in \mathfrak{N}$ with its support $\supp(\xi)$ and make the convention that set notation of the form $\{x_1, \ldots, x_k\}$ shall always imply that $x_1, \ldots, x_k \in \mcx$ are pairwise different except where explicitly stated otherwise. For $A \subset \mcx$ and $x \in \mcx \setminus A$ denote by $A^{\atwo} = \{ \{y,z\} \subset A \}$ the set of potential edges within $A$ and by $\langle A,x \rangle = \{ \{y,x\} \mvert y \in A \}$ the set of potential edges between $A$ and $x$. We make ample use of this notation in particular  for $A=\mcx$ and $A=\xi$.

Denote by $\mfn_2 = \mathfrak{N}(\mcx^{\langle2\rangle})$ the space of finite simple counting measures on the set of two-element subsets of $\mcx$ (equipped with the $\sigma$-algebra obtained by identifying $\mcx^{\langle2\rangle}$ with $\{(x,y) \in \mcx^2 \, | \, x < y\}$ and taking the trace $\sigma$-algebra from $\mcx^2$). On $\mfn$ and $\mfn_2$ we use the usual Borel $\sigma$-algebras with respect to their respective vague topologies (which are also the smallest $\sigma$-algebras that render point counts measurable) and denote them by $\mcn$ and $\mcn_2$, respectively.

We call a probability kernel $Q$ from $\mfn$ to $\mcn_2$ an \emph{edge kernel} if the measure $Q(\xi,\cdot)$ is concentrated on $\mfn_2 \vert_{\xi^{\langle2\rangle}} = \{\sigma \in \mfn_2 \mvert \sigma((\xi^{\langle2\rangle})^c) = 0\}$ for any $\xi \in \mfn$. In what follows we tacitly identify this set with the set $\mfn(\xi^{\langle2\rangle})$ unless stated otherwise.

For a probability distribution $P$ on $\mfn$ and an arbitrary edge kernel $Q$, we consider the probability distribution $P \otimes Q$ on $\mfn \times \mfn_2$ given as the kernel product satisfying
\begin{equation*}
  (P \otimes Q)(f) = \int_{\mfn \times \mfn_2} f(\xi,\sigma) \; (P \otimes Q)(d(\xi,\sigma)) = \int_{\mfn} \int_{\mfn_2} f(\xi, \sigma) \; Q(\xi,d\sigma) \;P(d\xi)
\end{equation*}
for any measurable $f \colon \mfn \times \mfn_2 \to \R_+$. For the generic random element of $\mfn \times \mfn_2$ having this distribution, we write $(\Xi,\Sigma) \sim P \otimes Q$. By definition $\Sigma$ is concentrated on $\mfn_2 \vert_{\xi^{\langle2\rangle}}$ given $\Xi=\xi$.

We furthermore write $\mathbb{G} = \{(\xi,\sigma) \mvert \xi \in \mfn, \sigma \in \mfn_2 \vert_{\xi^{\langle2\rangle}}\}$, which we identify with the set of simple graphs with a finite vertex set $\subset \mcx$.

\subsection{Gibbs processes}\label{ch: gibbs}
Let $\alpha$ be a diffuse measure on the compact set $\mcx\subset \R^d$ and denote by $\pop(\alpha)$ the distribution of the Poisson point process with intensity measure $\alpha$. 

A (finite) Gibbs process on $\mcx$ is a point process that has a hereditary density $u:\mfn\to\R_+$ with respect to $\pop(\alpha)$, where we call a function $u:\mfn\to\R_+$ hereditary, if $u(\xi)=0$ implies $u(\eta)=0$ for any $\xi$, $\eta\in\mfn$ with $\xi\subset \eta$. 

The assumption that $\alpha$ is diffuse implies that the corresponding Poisson and Gibbs processes are simple. We remark, that it is possible to relax this condition and refer to \textcite{stucki2014} for a discussion of this case.

An alternative way to describe a Gibbs process is the conditional intensity $\lambda:\mfn\times\mcx\to\R$ which can be derived from the density~$u$ (using the hereditary property) via
$$\lambda(x\mvert \xi) = \frac{u(\xi + \delta_x)}{u(\xi)},$$
where we set $0/0 = 0$. In particular, we can use the above formula to recursively recover the (unnormalised) density from the conditional intensity~$\lambda$.
We denote the distribution of a Gibbs process with conditional intensity $\lambda$ by $\gibbs(\lambda)$ or $P^{\lambda}$ for short. In the special case where $\lambda$ does not depend on the point configuration, i.e.\ $\lambda(\cdot\mvert \xi) = \lambda(\cdot)$, we recover a Poisson point process whose distribution we denote by $\pop(\lambda)$.

In the following we often impose a somewhat relaxed local stability condition 
\begin{equation} \label{eq: stability cond}
    \sup_{\xi\in\mfn} \int_{\mcx} \lambda(x\mvert\xi) \alpha(\diff x) <\infty, \tag{LS}
\end{equation}
to ensure the finiteness of the expected number of points in the Gibbs process. Note that this condition is weaker than the usual local stability condition, i.e.\ $\sup_{\xi \in \mfn} \lambda(x\mvert \xi)\leq \beta(x)$ for some integrable function $\beta:\mcx\to\R_+$.

An important subclass of Gibbs processes are the pairwise interaction processes that have a conditional intensity of the form 
\begin{equation} \label{eq: cif of pip}
    \lambda(x\mvert\xi) = \beta(x)\prod_{y\in\xi} \varphi(x,y)
\end{equation}
for an activity function $\beta:\mcx\to \R_+$ and a symmetric interaction function~$\varphi:\mcx\times\mcx\to\R_+$. We denote the distribution of a pairwise interaction process by $\text{PIP}(\varphi,\beta)$.
If $\varphi\leq 1$, we furthermore call the process inhibitory. Note that an inhibitory pairwise interaction process is locally stable (and thus satisfies \eqref{eq: stability cond}) if $\beta$ is integrable.

\subsection{Generalised random geometric graphs} \label{ch: kappa geom RG}

We use the term \emph{generalised random geometric graph} or, more specifically, $\mathrm{RGG}(\lambda,\kappa)$ to refer to a random graph whose vertex process is a finite Gibbs process with conditional intensity $\lambda:\mcx\times\mfn\to\R_+$ and whose edges are created independently of each other (given the vertex set) with connection probabilities from the measurable function $\kappa:\mcx\times\mcx \to [0,1]$. The latter means that we choose the edge kernel $Q^{\kappa}$ given by
$$
  Q^{\kappa}(\xi,\{\sigma\}) =  \prod_{\{x,y\}\in\xi^{\atwo}}\kappa(x,y)^{\sigma(\{x,y\})}(1-\kappa(x,y))^{1-\sigma(\{x,y\})}
$$
for $\sigma \in \mfn_2\vert_{\xi^{\atwo}}$,
so that the distribution of our $\mathrm{RGG}(\lambda,\kappa)$ is ${P^{\lambda} \otimes Q^{\kappa}}$. To set it apart from more general types of random graphs, we typically denote the random element by $(\Eta,\Tau)$ rather than $(\Xi,\Sigma)$.

Note that the creation of an edge depends only on the location of the two vertices to be connected by it. In particular, the process of edges between $\xi \in \mfn$ and $x\in\mcx\setminus\xi$ is independent of the process of edges within $\xi$, i.e.\
\begin{equation} \label{eq:kappa_grg_product_formula_prelim}
  Q^{\kappa}(\xi+\delta_x, \{\sigma_1+\sigma_2\}) = Q^{\kappa}(\xi, \{\sigma_1\}) \, Q_2^{\kappa}(\xi,x;\{\sigma_2\}) \quad \text{ for all $\sigma_1 \in \mfn_2 \vert_{\xi^{\atwo}}$, $\sigma_2 \in \mfn_2\vert_{\langle \xi,x\rangle}$},
\end{equation}
where $Q_2^{\kappa}(\xi,x;\{\sigma_2\}) := \prod_{y\in\xi}\kappa(x,y)^{\sigma_2(\{x,y\})}(1-\kappa(x,y))^{1-\sigma_2(\{x,y\})}$.\\

We introduce what we call a \emph{graph birth-and-death process (GBDP)}, which is a special pure-jump Markov process (PJMP) with state space $\G$ that has ${P^{\lambda} \otimes Q^{\kappa}}$ as its stationary distribution (see Lemma~\ref{le: stationary distribution}). Denote by $q(\xi) = \int_{\mcx}\lambda(x\vert\xi)\alpha(\diff x) + \abs{\xi}$ the jump rate at $(\xi,\sigma) \in \G$. It is independent of $\sigma$ and we assume from now on that it is positive, since $q(\xi) = 0$ is only possible for $\xi = \emptyset$ and implies that $\emptyset$ is absorbing, which is not a very interesting case for this paper as the stationary distribution would be concentrated on the empty graph.

The dynamics of the GBDP are then as follows. Being in a state $(\xi,\sigma)\in\G$, the process stays there for an Exp($q(\xi)$)-distributed time. After this time, a vertex is added (``born'') with probability $(\int_{\mcx}\lambda(x\vert\xi)\alpha(\diff x))/q(\xi)$ and is deleted (``dies'') with probability $\abs{\xi}/q(\xi)$. In the former case the new vertex is positioned at $x\in\mcx$ according to the density $\lambda(x\vert\xi)/(\int_{\mcx}\lambda(x\vert\xi)\alpha(\diff x))$ and we add edges. In the latter case, the vertex $x\in\xi$ to be removed is chosen uniformly among all vertices $\xi$, i.e.\ with probability $1/\abs{\xi}$. At the same time a vertex $x$ is born, we add edges between $x$ and vertices of $\xi$ according to $Q_2^{\kappa}(\xi,x;\cdot)$, i.e.\ independently of each other and of the existing vertices $\sigma$. If a vertex $x$ dies, we immediately delete all of its incident edges.

In summary, these dynamics belong to a PJMP on $\G \subset \mfn \times \mfn_2$ with jump rate function $q$ and
transition kernel $\overline{P}^{\lambda,\kappa}$ of its jump chain given by
\begin{align*} 
    \overline{P}^{\lambda,\kappa}((\xi,\sigma),F) \; = \ &\frac{1}{q(\xi)} \biggl[\int_{\mcx}\int_{\mfn_2} 1_F(\xi+\delta_x,\sigma + \tau) \; Q_2^{\kappa}(\xi,x;\diff \tau) \; \lambda(x\vert\xi) \alpha(\diff x) \\
    &\hspace*{20mm} + \int_{\mcx} 1_F(\xi-\delta_x,\sigma\vert_{(\xi-\delta_x)^{\atwo}}) \xi(\diff x)\biggr] 
\end{align*}
for $(\xi,\sigma)\in\G$ and $F\in\mcn\otimes\mcn_2$. By Corollary~G.1 of \textcite{moller2004}, this process is non-explosive if the underlying Gibbs process is locally stable. Following the theory of pure-jump Markov processes, see Section~4.2 and Section~4.11 Problem~5 of \textcite{ethier2009}, the generator $\mcg^{\lambda,\kappa}$ is given by 
\begin{alignat}{1} \label{eq: Generator}
    &  \notag\mcg^{\lambda,\kappa} h(\xi,\sigma)\\
    &\ =\int_{\mcx} \E \bigl[h\bigl(\xi + \delta_x, \sigma + \Tau_{\xi,x}\bigr) - h(\xi,\sigma)\bigr] \lambda(x\vert\xi) \alpha(\diff x) +\int_{\mcx} \bigl[h\bigl(\xi - \delta_x, \sigma\vert_{(\xi-\delta_x)^{\atwo}}\bigr) - h(\xi,\sigma)\bigr]\xi(\diff x),
\end{alignat}
where ${\Tau}_{\xi,x} \sim Q_2^{\kappa}(\xi,x;\cdot)$ and $h\in \mathcal{F}'$ the set of bounded functions $h\colon \G \to \R$ such that the right-hand side is well defined. 

\subsection{Metrics for spatial graphs} \label{ch: metric}

We use either of the graph optimal subpattern assignment (GOSPA) metrics~$d_{\mathbb{G},i} := d_{\mathbb{G},R_i}$, $i=1,2$, introduced in \textcite{sw2023} to assess the difference of two deterministic graphs based on underlying metrics $d_V\leq C_V$ and $d_E\leq C_E$ on the vertex and edge space, respectively. 
To make this more precise, consider two graphs $(\xi,\sigma),(\eta,\tau)\in\G$ with 
$\xi = \sum_{i=1}^n \delta_{x_i}$ and $\sigma = \sum_{\substack{i,j=1\\i<j}}^{n} e_{ij}\,\delta_{\{x_i,x_j\}}$ as well as $\eta = \sum_{i=1}^m \delta_{y_i}$ and $\tau = \sum_{\substack{i,j=1\\i<j}}^{m} f_{ij}\,\delta_{\{y_i,y_j\}}$, where $e_{ij}$, $f_{ij}\in\{0,1\}$ indicate whether the edges $\{x_i,x_j\}$ and $\{y_i,y_j\}$ are present in $(\xi,\sigma)$ and $(\eta,\tau)$, respectively. Furthermore, denote by $\tilde{e}_{ij} = (e_{ij},\{x_i,x_j\}^{e_{ij}})$ and $\tilde{f}_{ij} = (f_{ij},\{y_i,y_{j}\}^{f_{ij}})$ the available information for an edge, where we set $A^1=A$ and $A^0 = \emptyset$ for any set $A$.

In the special case where $(\xi,\sigma)$ and $(\eta,\tau)$ have the same size, i.e.\ if $n = m$, the two versions of the GOSPA metric agree and are given by
\begin{align}  \label{eq: dgi same cardinality}
    d_{\mathbb{G},i}((\xi,\sigma),(\eta,\tau)) = \frac{1}{n} \biggl( \min_{\pi\in S_n} \sum_{i\in [n]} d_V(x_i,y_{\pi(i)}) + \frac{1}{n-1} \sum_{\substack{i,i' \in [n] \\ i < i'}} d_E(\tilde{e}_{ii'}, \tilde{f}_{\pi(i),\pi(i')}) \biggr),
\end{align}
where $S_n$ denotes the set of permutations on $[n] = \{1,\ldots,n\}$. 

In the general case, where the two compared graphs can be of different size, we keep the above structure and match all vertices of the smaller graph with (some of) the vertices in the larger graph. Additionally, we add a penalty for every vertex (and its corresponding edges) in the larger graph that cannot be matched. For the GOSPA1 metric, this penalty only depends on the number of possible edges, 
while for the GOSPA2 metric the penalty depends on the actual edge structure of the respective unmatched vertices. In particular, if the sizes of the two graphs differ only by one, the penalty for the additional vertex $x$, say, is $C_V + C_E =: \tilde{C}_1$ in the GOSPA1 metric and $C_V + C_E + \frac{1}{n-1} \text{deg}(x) \,C_E \leq C_V + 2C_E =: \tilde{C}_2$ in the GOSPA2 metric (not including the overall normalisation by $1/n$). The additional ${} + C_E$ for the GOSPA2 metric is explained in \textcite[Remark~2.9]{sw2023}. In the general case, where we do not restrict to two graphs with sizes differing only by one, the choice of the penalties and the boundedness of the underlying metrics $d_V$ and $d_E$ yield that the GOSPA metrics are bounded by $C_1 = C_V + \frac12C_E$ and $C_2 = C_V + C_E$, respectively, see \textcite[Proposition~2.14]{sw2023} for details.

To quantify distances between random graph distributions, we introduce the Wasserstein metric $W_{\G}$ (of order 1) with respect to either of the two GOSPA metrics~$d_{\mathbb{G}} = d_{\mathbb{G},i}$, $i=1,2$. Let
$$\mathcal{M}_{\G} = \biggl\{ P \text{ probability measure on } \G \biggm\vert \int d_{\G}((\xi,\sigma), \boldsymbol{0}) \, P(d(\xi,\sigma)) < \infty \biggr\},$$
where $\boldsymbol{0}$ denotes the empty graph.

\begin{definition} \label{def: wasserstein G}
The \emph{Wasserstein metric} $W_{\mathbb{G}}$ on $\mathcal{M}_{1}(\G)$ based on $d_{\mathbb{G}}$ is defined as
$$W_{\G}(P,Q) = \inf_{\substack{(\Xi,\Sigma) \sim P \\ (\Eta,\Tau) \sim Q}} \E \, d_{\mathbb{G}} \bigl( (\Xi,\Sigma), (\Eta,\Tau) \bigr) = \sup_{f \in \mcf_{\G}} \, \biggl| \int_{\mathbb{G}} f \, dP - \int_{\mathbb{G}} f \, dQ \biggr|, \quad P,Q \in \mathcal{M}_{\G},$$
where $\mcf_{\G} = \bigl\{f \colon \G \to \R \bigm| \abs{f(\xi,\sigma) - f(\tau,\eta)} \leq d_{\G}((\xi,\sigma),(\tau,\eta)) \bigr\}$ and the second equality is due to the Kantorovich--Rubinstein theorem.
\end{definition}

Then $W_{\mathbb{G}}(P_n,P) \to 0$ is equivalent to $P_n \to P$ weakly and $\int d_{\G}((\xi,\sigma), \boldsymbol{0})\, P_n(d(\xi,\sigma)) \to \int d_{\G}((\xi,\sigma), \boldsymbol{0}) \, P(d(\xi,\sigma))$ for both underlying metrics; see \textcite[Theorem~6.9]{villani2009}. By the fact that the two GOSPA metrics $d_{\mathbb{G},i}$, $i=1,2$ are bounded, we obtain that the corresponding Wasserstein metrics metrize weak convergence.

We write $\mathcal{M}_{\G,i}$, $\mcf_{\G,i}$ and $W_{\G,i}$ for $i=1,2$ rather than $\mathcal{M}_{\G}$, $\mcf_{\G}$ and $W_{\G}$ for the objects defined above whenever the type of the underlying GOSPA metric is important.

\section{Georgii--Nguyen--Zessin Formula for Spatial Random Graphs} \label{sec: gnz for srg}

We introduce a variant of the GNZ formula for spatial random graphs for a simple (i.e.\ multi-point free) point process $\Xi$ with distribution $P$ on a general localised Borel space $(\mcx, \hmcb, \mcb)$. Here $\mcb$ denotes the $\sigma$-algebra on $\mcx$ and $\hmcb \subset \mcb$ a localising ring. See \textcite[Chapter 1]{Kallenberg2017rm} for the general concept. If $\mcx$ is a complete separable metric space (c.s.m.s.), we may choose for $\mcb$ the Borel $\sigma$-algebra and for $\hmcb$ the system of bounded sets in $\mcb$. 

Assume that $\Xi$ satisfies Condition \eqref{eq:condsigma}, which states that
\begin{equation}  \label{eq:condsigma}
    \Prob \bigl( \Xi(B) = 0 \bigm| \Xi\vert_{B^c} \bigr) > 0 \quad \text{a.s.} \tag{$\Sigma$}
\end{equation}
for any $B \in \hmcb$.

There is then a $P$-a.s.\ unique probability kernel $L$ from $\mfn=\mfn(\mcx)$ to $\mcx$, called the \emph{Papangelou kernel} of $\Xi$, satisfying for every measurable $h \colon \mfn \times \mcx \to \R_+$
\begin{equation}  \label{eq:gnz}
  \E \biggl( \int_{\mcx} h(\Xi-\delta_x,x) \; \Xi(dx) \biggr)
  = \E \biggl( \int_{\mcx} h(\Xi,x) \; L(dx \mvert \Xi) \biggr).
\end{equation}
This equation is often referred to as the \emph{Georgii--Nguyen--Zessin (GNZ) equation}. 
Since we have assumed $\Xi$ to be simple, we obtain $L(\supp(\xi) \mvert \xi) = 0$ for $P$-a.e.\ $\xi$. For this and further results, see \textcite[Chapter 8]{Kallenberg2017rm}.

To state the GNZ formula for spatial random graphs in Theorem~\ref{thm:graph_gnz} below, some more notation is required. For $\xi \in \mfn$, $x \in \mcx \setminus \xi$ and the measurable map $\varphi_x \colon \mfn_2 \to \mfn_2 \times \mfn_2$, $\sigma \mapsto (\sigma\vert_{(\mcx\setminus\{x\})^{\atwo}},\sigma\vert_{\langle \mcx\setminus\{x\}, x \rangle})$, define the probability measure
\begin{equation*}
    \tQ(\xi,x;\, \cdot) = Q(\xi+\delta_x;\ \cdot) \circ \varphi_x^{-1}
\end{equation*}
on $\mfn_2 \times \mfn_2$, which is concentrated on $\mfn_2 \vert_{\xi^{\atwo}} \times \mfn_2 \vert_{\langle \xi,x \rangle}$. Thus $\tQ(\xi,x;\, \cdot)$ represents the distribution of the edge process on the vertex set $\xi+\delta_x$ as a joint distribution of the edge process on the vertex set $\xi$ and the process of edges between $\xi$ and $x$. We disintegrate the measure $\tQ(\xi,x;\, \cdot)$ with respect to its first marginal $\tilde{Q}_1(\xi,x;\, \cdot) = \tQ\bigl(\xi,x;\, \cdot \times \mfn_2 \bigr)$ as
\begin{equation}  \label{eq:ekernel_disintegration}
    \tQ(\xi,x;\, d(\sigma_1,\sigma_2)) = \tilde{Q}_1(\xi, x;\, d\sigma_1) \otimes \tilde{Q}_{2 \mvert 1}(\xi, x, \sigma_1;\, d\sigma_2);
\end{equation}
see \textcite[Theorem 3.4]{Kallenberg2021fomp}. 

\begin{theorem} \label{thm:graph_gnz}
  For any measurable function $h:\mfn \times \mfn_2 \times \mcx \times \mfn_2 \to \R_+$ we have
  \vspace*{-5mm}
  
  \begin{equation*}
  \begin{split}
    &\int_{\mfn \times \mfn_2} \int_{\mcx} h \bigl(\xi-\delta_x, \sigma \vert_{(\xi-\delta_x)^{\atwo}}, x, \sigma \vert_{\langle \xi-\delta_x,x \rangle} \bigr) \; \xi(dx) \; (P \otimes Q)(d(\xi,\sigma)) \\
    &= \int_{\mfn \times \mfn_2} \int_{\mcx} \int_{\mfn_2} h\bigl(\xi, \sigma, x, \sigma_2 \bigr) \; \tilde{Q}_{2 \mvert 1}(\xi,x,\sigma ;\, d\sigma_2) \; L(\diff x \mvert \xi) \; (P \otimes Q)(d(\xi,\sigma))  \\
    &\hspace*{3mm} {} + \int_{\mfn} \int_{\mcx} \int_{\mfn_2} \int_{\mfn_2}\! h\bigl(\xi, \sigma_1, x, \sigma_2 \bigr) \; \tilde{Q}_{2 \mvert 1}(\xi,x,\sigma_1 ;\, d\sigma_2) \; \bigl( \tilde{Q}_1(\xi, x;\, d\sigma_1) - Q(\xi, d\sigma_1) \bigr) \; L(dx \mvert \xi) \; P(d\xi).
  \end{split}
  \end{equation*}
\end{theorem}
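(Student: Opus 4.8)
The plan is to reduce the identity to the ordinary GNZ equation~\eqref{eq:gnz} for the vertex process $\Xi$, applied to an integrand obtained by first integrating out the edges. Since $h\geq 0$, Fubini--Tonelli rewrites the left-hand side as
\begin{equation*}
  \int_{\mfn} \int_{\mcx} \biggl[ \int_{\mfn_2} h\bigl(\xi-\delta_x,\, \sigma\vert_{(\xi-\delta_x)^{\atwo}},\, x,\, \sigma\vert_{\langle \xi-\delta_x, x\rangle}\bigr) \, Q(\xi, d\sigma) \biggr] \, \xi(dx) \, P(d\xi).
\end{equation*}
The crucial step is to identify the bracketed inner integral with $g(\xi-\delta_x,x)$, where
\begin{equation*}
  g(\eta, x) := \int_{\mfn_2\times\mfn_2} h(\eta,\sigma_1,x,\sigma_2) \, \tQ(\eta,x; d(\sigma_1,\sigma_2)) \quad\text{for } \eta\in\mfn,\ x\in\mcx\setminus\eta,
\end{equation*}
extended by $g(\eta,x):=0$ for $x\in\eta$. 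Indeed, for $x\in\supp(\xi)$ we have $\xi=(\xi-\delta_x)+\delta_x$ with $x\notin\xi-\delta_x$ (as $\Xi$ is $P$-a.s.\ simple), and since $Q(\xi,\cdot)$ is concentrated on $\mfn_2\vert_{\xi^{\atwo}}$, the restriction maps $\sigma\mapsto\sigma\vert_{(\mcx\setminus\{x\})^{\atwo}}$ and $\sigma\mapsto\sigma\vert_{\langle\mcx\setminus\{x\},x\rangle}$ agree $Q(\xi,\cdot)$-a.s.\ with $\sigma\mapsto\sigma\vert_{(\xi-\delta_x)^{\atwo}}$ and $\sigma\mapsto\sigma\vert_{\langle\xi-\delta_x,x\rangle}$; hence the push-forward of $Q(\xi,\cdot)$ under $\varphi_x$ equals $\tQ(\xi-\delta_x,x;\cdot)$ and the inner integral equals $g(\xi-\delta_x,x)$. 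Measurability of $g$ on $\mfn\times\mcx$ is routine from $Q$ being a kernel and $\varphi_x$ depending measurably on $x$.

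Consequently the left-hand side equals $\E\bigl(\int_{\mcx} g(\Xi-\delta_x,x)\,\Xi(dx)\bigr)$, and \eqref{eq:gnz} (applied to $g$, or to $g\wedge n$ followed by monotone convergence if one wants a finite-valued integrand) turns this into $\E\bigl(\int_{\mcx} g(\Xi,x)\,L(dx\mvert\Xi)\bigr)$; here simplicity of $\Xi$ together with $L(\supp(\xi)\mvert\xi)=0$ guarantees that $g$ is only evaluated at pairs $(\xi,x)$ with $x\notin\xi$, where $\tQ(\xi,x;\cdot)$ is defined. Inserting the definition of $g$ and disintegrating $\tQ$ as in~\eqref{eq:ekernel_disintegration}, this becomes
\begin{equation*}
  \int_{\mfn}\int_{\mcx}\int_{\mfn_2}\int_{\mfn_2} h(\xi,\sigma_1,x,\sigma_2)\, \tilde{Q}_{2\mvert 1}(\xi,x,\sigma_1;d\sigma_2)\, \tilde{Q}_1(\xi,x;d\sigma_1)\, L(dx\mvert\xi)\, P(d\xi).
\end{equation*}

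To conclude, I would write $\tilde{Q}_1(\xi,x;d\sigma_1) = Q(\xi,d\sigma_1) + \bigl(\tilde{Q}_1(\xi,x;d\sigma_1) - Q(\xi,d\sigma_1)\bigr)$. After one more Fubini step the $Q(\xi,d\sigma_1)$-part recombines through $Q(\xi,d\sigma_1)\,P(d\xi)=(P\otimes Q)(d(\xi,\sigma_1))$ into the first term on the right-hand side (renaming $\sigma_1$ to $\sigma$), and what is left over is precisely the second term. The one piece of bookkeeping needing care is that $\tilde{Q}_1(\xi,x;\cdot)-Q(\xi,\cdot)$ is a finite signed measure, so this split is immediately valid once the integrals are finite --- e.g.\ for bounded $h$, since for $\mu:=\tilde{Q}_1(\xi,x;\cdot)-Q(\xi,\cdot)$ one has $\mu^+\leq\tilde{Q}_1(\xi,x;\cdot)$ and $\mu^-\leq Q(\xi,\cdot)$, so both parts are dominated by integrals already computed --- after which the general $h\geq 0$ case follows by monotone approximation. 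I expect the genuine obstacle to be not this bookkeeping but the identification in the previous paragraph: matching the restriction operations occurring in the integrand with the definition of $\tQ(\cdot,\cdot;\cdot)$ and with its disintegration~\eqref{eq:ekernel_disintegration}.
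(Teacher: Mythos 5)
Your proposal is correct and follows essentially the same route as the paper's proof: rewrite the left-hand side via Tonelli and the pushforward identity $Q(\xi,\cdot)\circ\varphi_x^{-1}=\tQ(\xi-\delta_x,x;\,\cdot)$, apply the ordinary GNZ equation~\eqref{eq:gnz} to the edge-averaged function $g=\tilde h$, and then disintegrate $\tQ$ as in~\eqref{eq:ekernel_disintegration} and split $\tilde{Q}_1=Q+(\tilde{Q}_1-Q)$ to recombine the $Q$-part with $P$ into $P\otimes Q$. Your additional remarks on measurability of $g$ and on handling the signed measure $\tilde{Q}_1-Q$ only add rigour to steps the paper treats implicitly.
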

\vspace*{0mm}

\begin{proof}
  By Tonelli's theorem and the transformation theorem, noting that $\sigma \vert_{(\xi-\delta_x)^{\atwo}} \!= \sigma \vert_{(\mcx \setminus \{x\})^{\atwo}}$ and $\sigma \vert_{\langle \xi-\delta_x,x \rangle} = \sigma \vert_{\langle \mcx \setminus \{x\},x \rangle}$ $Q(\xi,\cdot)$-a.s.\ for any $\xi \in \mfn$ and $x \in \xi$, we have
  \begin{equation*}
  \begin{split}
    &\int_{\mfn \times \mfn_2} \int_{\mcx} h \bigl(\xi-\delta_x, \sigma \vert_{(\xi-\delta_x)^{\atwo}}, x, \sigma \vert_{\langle \xi-\delta_x,x \rangle} \bigr) \; \xi(dx) \; (P \otimes Q)(d(\xi,\sigma)) \\
    &=\int_{\mfn} \int_{\mcx} \int_{\mfn_2} h \bigl(\xi-\delta_x, \sigma \vert_{(\xi-\delta_x)^{\atwo}}, x, \sigma \vert_{\langle \xi-\delta_x,x \rangle} \bigr) \; Q(\xi, d\sigma) \; \xi(dx) \; P(d\xi) \\
    &= \int_{\mfn} \int_{\mcx} \int_{\mfn_2 \times \mfn_2} h \bigl(\xi-\delta_x, \sigma_1, x, \sigma_2 \bigr)  \;\tilde{Q}(\xi-\delta_x,x;\, d(\sigma_1,\sigma_2)) \; \xi(dx) \; P(d\xi).
  \end{split}
  \end{equation*}
  Setting $\tilde{h}(\xi,x) = \int_{\mfn_2 \times \mfn_2} h \bigl(\xi, \sigma_1, x, \sigma_2 \bigr)  \;\tilde{Q}(\xi,x;\, d(\sigma_1,\sigma_2))$, we obtain a measurable function $\tilde{h} \colon \mfn \times \mcx \to \R_{+}$. Applying the usual GNZ formula \eqref{eq:gnz} for point processes to this function yields that the right hand side above is equal to
  \begin{equation*}
  \begin{split}
    &\int_{\mfn} \int_{\mcx} \int_{\mfn_2 \times \mfn_2} h\bigl(\xi, \sigma_1, x, \sigma_2 \bigr) \;\tilde{Q}(\xi,x;\, d(\sigma_1,\sigma_2)) \; L(dx \mvert \xi) \; P(d\xi) \\
    &= \int_{\mfn} \int_{\mcx} \int_{\mfn_2} \int_{\mfn_2} h\bigl(\xi, \sigma, x, \sigma_2 \bigr) \; \tilde{Q}_{2 \mvert 1}(\xi,x,\sigma ;\, d\sigma_2) \; Q(\xi,d\sigma) \; L(\diff x \mvert \xi) \; P(d\xi)  \\
    &\hspace*{3mm} {} + \int_{\mfn} \int_{\mcx} \int_{\mfn_2} \int_{\mfn_2} h\bigl(\xi, \sigma_1, x, \sigma_2 \bigr) \; \tilde{Q}_{2 \mvert 1}(\xi,x,\sigma_1 ;\, d\sigma_2) \; \bigl( \tilde{Q}_1(\xi, x;\, d\sigma_1) - Q(\xi, d\sigma_1) \bigr) \; L(dx \mvert \xi) \; P(d\xi).
  \end{split}
  \end{equation*}
  By Tonelli's theorem and the definition of $P \otimes Q$ we obtain the statement.
\end{proof}

In the context of the original GNZ formula, the second term on the right hand side may be seen as an error term that quantifies the influence of an additional vertex at $x$ on the creation of edges between other vertices. We formulate the special case that there is no such influence in terms of random elements and expectations.
\begin{corollary} \label{cor: GNZwithoutAdditionalInfluence}
  Let $(\Xi, \Sigma) \sim P \otimes Q$ be a random graph with the property that the distribution of the edge process $\Sigma\vert_{\xi^{\atwo}}$ given $\Xi=\xi+\delta_x$ is the same as given $\Xi=\xi$ for $P$-a.e.\ $\xi$.
  Then we have for any measurable function $h:\mfn \times \mfn_2 \times \mcx \times \mfn_2 \to \R_+$
  \begin{equation*}
  \begin{split}
      \E \biggl(\int_{\mcx} &h \bigl(\Xi-\delta_x, \Sigma \vert_{(\Xi-\delta_x)^{\atwo}}, x, \Sigma \vert_{\langle \Xi-\delta_x,x \rangle} \bigr) \; \Xi(dx) \biggr) \\
      &= \E \biggl( \int_{\mcx} \E \Bigl( h \bigl(\Xi, \Sigma, x, \Tilde{\Sigma}_{\Xi,\Sigma,x} \bigr) \Bigm| \Xi, \Sigma \Bigr) \; L(dx \mvert \Xi) \biggr),
  \end{split}
  \end{equation*}
  where $\Tilde{\Sigma}_{\xi,\sigma,x}$ is the random element on $\mfn_2$ with distribution $\mathcal{L} \bigl( \Sigma \vert_{\langle \xi, x \rangle} \bigm| \Xi = \xi+\delta_x, \Sigma \vert_{\xi^{\atwo}} = \sigma \bigr) = \tilde{Q}_{2 \mvert 1}(\xi,x,\sigma ;\, \cdot)$.
  In particular, if $h$ is constant in the last argument,
  \begin{equation*}
  \begin{split}
      \E \biggl( \int_{\mcx} &h \bigl(\Xi-\delta_x, \Sigma \vert_{(\Xi-\delta_x)^{\atwo}}, x\bigr) \; \Xi(dx) \biggr) = \E \biggl( \int_{\mcx} h \bigl(\Xi, \Sigma, x\bigr) \; L(dx \mvert \Xi) \biggr).
  \end{split}
  \end{equation*}
\end{corollary}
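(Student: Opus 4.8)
The plan is to read the result off from Theorem~\ref{thm:graph_gnz} by showing that the stated hypothesis annihilates the second term on its right-hand side, and then to rewrite the remaining identity in terms of expectations.

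First I would unwind the probabilistic content of the kernels $\tilde{Q}_1$ and $\tilde{Q}_{2\mvert1}$. Since $Q(\xi+\delta_x;\cdot)$ is concentrated on $\mfn_2\vert_{(\xi+\delta_x)^{\atwo}}$ and $(\xi+\delta_x)^{\atwo}$ is the disjoint union of $\xi^{\atwo}$ and $\langle\xi,x\rangle$, the map $\varphi_x$ acts on this support simply by splitting a configuration $\sigma$ into the pair $(\sigma\vert_{\xi^{\atwo}},\sigma\vert_{\langle\xi,x\rangle})$. Hence $\tQ(\xi,x;\cdot)$ is the joint law of $\bigl(\Sigma\vert_{\xi^{\atwo}},\Sigma\vert_{\langle\xi,x\rangle}\bigr)$ given $\Xi=\xi+\delta_x$; its first marginal $\tilde{Q}_1(\xi,x;\cdot)$ is $\mcl\bigl(\Sigma\vert_{\xi^{\atwo}}\bigm|\Xi=\xi+\delta_x\bigr)$, and the disintegration kernel from \eqref{eq:ekernel_disintegration}, $\tilde{Q}_{2\mvert1}(\xi,x,\sigma_1;\cdot)$, is $\mcl\bigl(\Sigma\vert_{\langle\xi,x\rangle}\bigm|\Xi=\xi+\delta_x,\Sigma\vert_{\xi^{\atwo}}=\sigma_1\bigr)$, which is exactly the distribution of the random element $\tilde{\Sigma}_{\xi,\sigma_1,x}$ in the statement. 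On the other hand $Q(\xi,\cdot)$, being concentrated on $\mfn_2\vert_{\xi^{\atwo}}$, coincides with $\mcl\bigl(\Sigma\vert_{\xi^{\atwo}}\bigm|\Xi=\xi\bigr)$. Thus the hypothesis says precisely that $\tilde{Q}_1(\xi,x;\cdot)=Q(\xi,\cdot)$ for $P$-a.e.\ $\xi$ and every admissible $x\notin\xi$. Writing the second term of Theorem~\ref{thm:graph_gnz} as the difference of the two iterated integrals obtained by integrating $\int h\,\tilde{Q}_{2\mvert1}(\xi,x,\sigma_1;\diff\sigma_2)$ against $\tilde{Q}_1(\xi,x;\diff\sigma_1)$ and against $Q(\xi,\diff\sigma_1)$, this difference vanishes for $(P\otimes L)$-a.e.\ $(\xi,x)$ (recall $L(\supp(\xi)\mvert\xi)=0$, so only $x\notin\xi$ contribute), so the term is zero.

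It then remains to translate the surviving identity. By the definition of $P\otimes Q$, the left-hand side of Theorem~\ref{thm:graph_gnz} is the expectation $\E\bigl(\int_\mcx h(\Xi-\delta_x,\Sigma\vert_{(\Xi-\delta_x)^{\atwo}},x,\Sigma\vert_{\langle\Xi-\delta_x,x\rangle})\,\Xi(\diff x)\bigr)$, i.e.\ the left-hand side of the corollary. For the right-hand side, the inner integral $\int_{\mfn_2}h(\xi,\sigma,x,\sigma_2)\,\tilde{Q}_{2\mvert1}(\xi,x,\sigma;\diff\sigma_2)$ equals $\E\bigl(h(\Xi,\Sigma,x,\tilde{\Sigma}_{\Xi,\Sigma,x})\bigm|\Xi=\xi,\Sigma=\sigma\bigr)$ by the first step, and the outer $(P\otimes Q)$-integral against $L(\diff x\mvert\xi)$ then becomes $\E\bigl(\int_\mcx\E(h(\Xi,\Sigma,x,\tilde{\Sigma}_{\Xi,\Sigma,x})\mid\Xi,\Sigma)\,L(\diff x\mvert\Xi)\bigr)$, which is the claimed identity. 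The \enquote{in particular} case is then immediate: if $h$ does not depend on its last argument, the probability kernel $\tilde{Q}_{2\mvert1}(\xi,x,\sigma;\cdot)$ integrates it to itself, so the inner conditional expectation collapses and one is left with $\E\bigl(\int_\mcx h(\Xi,\Sigma,x)\,L(\diff x\mvert\Xi)\bigr)$.

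I do not expect a genuine obstacle, since this is a direct specialisation of Theorem~\ref{thm:graph_gnz}; the only point requiring care is the bookkeeping of the almost-everywhere qualifiers — matching the \enquote{for $P$-a.e.\ $\xi$} hypothesis with the vanishing of the measure $\tilde{Q}_1-Q$ for $(P\otimes L)$-a.e.\ $(\xi,x)$, and confirming that the conditional-law description of $\tilde{Q}_{2\mvert1}$ obtained from the disintegration \eqref{eq:ekernel_disintegration} is indeed the distribution ascribed to $\tilde{\Sigma}_{\xi,\sigma,x}$. As a sanity check, the hypothesis holds for any $\mathrm{RGG}(\lambda,\kappa)$ by the product formula \eqref{eq:kappa_grg_product_formula_prelim}.
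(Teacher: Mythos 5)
Your proposal is correct and follows exactly the intended route: the paper states this corollary as a direct specialisation of Theorem~\ref{thm:graph_gnz}, with the influence assumption giving $\tilde{Q}_1(\xi,x;\,\cdot) = Q(\xi,\cdot)$ so that the second (error) term vanishes, and the first term rewritten via the identification of $\tilde{Q}_{2\mvert1}(\xi,x,\sigma;\,\cdot)$ with the law of $\tilde{\Sigma}_{\xi,\sigma,x}$. Your bookkeeping of the almost-everywhere qualifiers and the collapse of the inner conditional expectation in the \enquote{in particular} case are both accurate, so nothing is missing.
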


We give two examples to illustrate the ``influence assumption'' in Corollary \ref{cor: GNZwithoutAdditionalInfluence} (no influence of an extra vertex on the edge distribution).

\begin{example}
The $k$-nearest neighbour graph on some vertex set $\Xi\subset \mcx$ does not fulfil the influence assumption. To see this, consider for $\xi+\delta_x$ the four corners of a square of which an arbitrary one is labelled $x$. Then the $2$-nearest neighbour graph on $\xi$ given $\Xi = \xi$ is the complete triangle, whereas the $2$-nearest neighbour graph on $\xi$ given $\Xi = \xi+\delta_x$ consists of the edges of the $2$-nearest neighbour graph on $\xi+\delta_x$ (square) that are not adjacent to $x$. Thus it misses the diagonal edge.
\end{example}

\begin{example} \label{ex: kappa-geom rg fulfils influence condition}
The generalised random geometric graph fulfils the influence assumption because edges are created independently with probabilities that depend only on the locations of their own vertices.
More generally, we may write the decomposition~\eqref{eq:ekernel_disintegration} for the generalised random geometric graph as
$$\tQ(\xi,x;\, d(\sigma_1,\sigma_2)) = \tilde{Q}_1(\xi, x;\, d\sigma_1) \otimes \tilde{Q}_{2 \mvert 1}(\xi, x, \sigma_1;\, d\sigma_2)= Q^{\kappa}(\xi,d\sigma_1) \otimes Q_2^{\kappa}(\xi,x;d\sigma_2)$$
by~\eqref{eq:kappa_grg_product_formula_prelim}.
\end{example}

As an application of the GNZ Formula for spatial random graphs, we show that the Markov process constructed around Equation \eqref{eq: Generator} has the generalised random geometric graph as its stationary distribution. 
\begin{lemma}\label{le: stationary distribution}
The stationary distribution of the Markov process generated by $\mcg^{\lambda,\kappa}$ is $P^{\lambda} \otimes Q^{\kappa}$.
\end{lemma}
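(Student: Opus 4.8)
The plan is to verify \emph{infinitesimal invariance}, that is, $\int_{\G}\mcg^{\lambda,\kappa}h\,\diff(P^{\lambda}\otimes Q^{\kappa})=0$ for every $h\in\mathcal{F}'$, and then to invoke the standard theory of pure-jump Markov processes to upgrade this to stationarity of $\mu:=P^{\lambda}\otimes Q^{\kappa}$. For the latter step one uses that, under local stability (which in particular gives non-explosiveness of the GBDP via Corollary~G.1 of \textcite{moller2004}), the bounded local functions on which $\mcg^{\lambda,\kappa}$ acts through \eqref{eq: Generator} form a core, and that infinitesimal invariance on a core implies $\mu P_t=\mu$ for the transition semigroup; cf.\ \textcite[Section~4.9]{ethier2009}. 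Throughout, $\sup_{\xi}\int_{\mcx}\lambda(x\mvert\xi)\,\alpha(\diff x)<\infty$ from \eqref{eq: stability cond} together with $\E\abs{\Eta}<\infty$ ensures that all integrals below are finite and that the interchanges of integration are licit.

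Granting this reduction, fix $h\in\mathcal{F}'$ and split, using \eqref{eq: Generator},
\[
  \int_{\G}\mcg^{\lambda,\kappa}h\,\diff\mu = B(h)-D(h),
\]
where $B(h)=\E\bigl(\int_{\mcx}\E[\,h(\Eta+\delta_x,\Tau+\Tau_{\Eta,x})-h(\Eta,\Tau)\mid\Eta,\Tau\,]\,\lambda(x\mvert\Eta)\,\alpha(\diff x)\bigr)$ with $\Tau_{\xi,x}\sim Q_2^{\kappa}(\xi,x;\cdot)$, and $D(h)=\E\bigl(\int_{\mcx}[\,h(\Eta,\Tau)-h(\Eta-\delta_x,\Tau\vert_{(\Eta-\delta_x)^{\atwo}})\,]\,\Eta(\diff x)\bigr)$. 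In $D(h)$ I would rewrite the integrand as $\bar h(\Eta-\delta_x,\Tau\vert_{(\Eta-\delta_x)^{\atwo}},x,\Tau\vert_{\langle\Eta-\delta_x,x\rangle})$ with $\bar h(\zeta,\sigma_1,x,\sigma_2):=h(\zeta+\delta_x,\sigma_1+\sigma_2)-h(\zeta,\sigma_1)$, which is valid since $\Tau=\Tau\vert_{(\Eta-\delta_x)^{\atwo}}+\Tau\vert_{\langle\Eta-\delta_x,x\rangle}$ a.s.\ for $x\in\Eta$. Because the $\mathrm{RGG}(\lambda,\kappa)$ satisfies the influence assumption with $\tilde Q_{2\mvert 1}(\xi,x,\sigma;\cdot)=Q_2^{\kappa}(\xi,x;\cdot)$ independent of $\sigma$ (Example~\ref{ex: kappa-geom rg fulfils influence condition}), Corollary~\ref{cor: GNZwithoutAdditionalInfluence}, applied to $\bar h^{+}$ and $\bar h^{-}$ separately, gives
\[
  D(h)=\E\biggl(\int_{\mcx}\E\bigl[\,h(\Eta+\delta_x,\Tau+\tilde\Tau_{\Eta,x})-h(\Eta,\Tau)\bigm|\Eta,\Tau\,\bigr]\,L(\diff x\mvert\Eta)\biggr),
\]
with $\tilde\Tau_{\xi,x}\sim Q_2^{\kappa}(\xi,x;\cdot)$ and $L$ the Papangelou kernel of $\gibbs(\lambda)$.

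It remains to identify $L$: since a Gibbs process with hereditary density $u$ w.r.t.\ $\pop(\alpha)$ has Papangelou kernel $\tfrac{u(\xi+\delta_x)}{u(\xi)}\,\alpha(\diff x)$, we read off $L(\diff x\mvert\xi)=\lambda(x\mvert\xi)\,\alpha(\diff x)$ (see \textcite{stucki2014}). Substituting this into the displayed formula for $D(h)$ and noting that, given $(\Eta,\Tau)$, the variable $\tilde\Tau_{\Eta,x}$ has exactly the same law $Q_2^{\kappa}(\Eta,x;\cdot)$ as $\Tau_{\Eta,x}$, we obtain $D(h)=B(h)$, hence $\int_{\G}\mcg^{\lambda,\kappa}h\,\diff\mu=0$ for all $h\in\mathcal{F}'$, as required. (As a consistency check, the same two facts — \eqref{eq:kappa_grg_product_formula_prelim} and $u(\xi+\delta_x)=\lambda(x\mvert\xi)u(\xi)$ — also show that $\mu$ is in fact reversible for the GBDP.) I expect the main obstacle to be the functional-analytic step of the first paragraph, namely pinning down an adequate core for $\mcg^{\lambda,\kappa}$ and justifying that infinitesimal invariance on it yields genuine stationarity; the GNZ manipulation itself is essentially bookkeeping once Corollary~\ref{cor: GNZwithoutAdditionalInfluence} and the form of the Papangelou kernel are in hand.
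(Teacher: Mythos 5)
Your proposal is correct and follows essentially the same route as the paper: rewrite the death term via the graph GNZ formula (Corollary~\ref{cor: GNZwithoutAdditionalInfluence}, applicable by Example~\ref{ex: kappa-geom rg fulfils influence condition}) with $\bar h(\zeta,\sigma_1,x,\sigma_2)=h(\zeta+\delta_x,\sigma_1+\sigma_2)-h(\zeta,\sigma_1)$, identify $L(\diff x\mvert\xi)=\lambda(x\mvert\xi)\,\alpha(\diff x)$ to match it with the birth term, conclude $\E\bigl(\mcg^{\lambda,\kappa}h(\Eta,\Tau)\bigr)=0$, and pass from infinitesimal invariance to stationarity by the standard PJMP results (the paper cites \textcite[p.~239, Proposition~9]{ethier2009} and \textcite[Lemma~1]{baddeley2000} for exactly the functional-analytic step you flag). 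Your extra care in splitting into $\bar h^{+}$, $\bar h^{-}$ is a harmless refinement of what the paper does implicitly.
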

\begin{proof}
  Let $(\Eta,\Tau) \sim P^{\lambda} \otimes Q^{\kappa}$ and fix $h\in\mathcal{F}'$. Consider the measurable function $\Tilde{h}:\mfn \times \mfn_2 \times \mcx \times \mfn_2 \to \R$, $\Tilde{h}(\eta,\tau,x,\tau') = h(\eta + \delta_x, \tau + \tau') - h(\eta, \tau)$.
  Due to Example~\ref{ex: kappa-geom rg fulfils influence condition}, we may apply Corollary \ref{cor: GNZwithoutAdditionalInfluence}
  to this function and obtain
\begin{alignat*}{2}
 &\E \biggl(\int_{\mcx} \bigl[ h(\Eta, \Tau) - h\bigl(\Eta - \delta_x, \Tau\vert_{(\Eta-\delta_x)^{\atwo}}\bigr) \bigr] \; \Eta(dx) \biggr) \\
 &= \E \biggl( \int_{\mcx} \E \Bigl( h \bigl(\Eta+\delta_x, \Tau + {\Tau}_{\Eta,x} \bigr) - h \bigl(\Eta, \Tau \bigr)\Bigm| \Eta \Bigr)   \; L(dx \mvert \Eta) \biggr),
\end{alignat*}
where $T_{\Eta,x} \sim Q_2^{\kappa}(\Eta,x; \cdot)$.
As $\Eta\sim \text{Gibbs}(\lambda)$ implies $L(dx \mvert \Eta) = \lambda(x \mvert \Eta) \, dx$, this yields
\begin{alignat*}{1}
    \E \bigl( \mcg^{\lambda,\kappa}h(\Eta,\Tau) \bigr) &=\E\biggl(\int_{\mcx} \E \Bigl(h\bigl(\Eta + \delta_x, \Tau + {\Tau}_{\Eta,x}\bigr) - h(\Eta,\Tau) \Bigm| \Eta \Bigr) \, \lambda(x \mvert \Eta) \alpha(\diff x)\biggr)\\
    & \hspace{1.4cm}- \E\biggl(\int_{\mcx} \bigl[h(\Eta,\Tau) - h\bigl(\Eta - \delta_x, \Tau\vert_{(\Eta-\delta_x)^{\atwo}}\bigr) \bigr]\,\Eta(\diff x)\biggr)=0.
\end{alignat*}
The claim now follows by Proposition~9 of \textcite[p.239]{ethier2009} and we refer to \textcite[Lemma~1]{baddeley2000} for details.
\end{proof}

As a second application, we derive a split-up formula for the edge-structure of a typical vertex in a spatial preferential attachment model as introduced by \textcite{aiello2008} and later generalised by \textcite{jacob2015}. In what follows, we consider the latter model. Let the vertices~$\Xi$ be given by a Poisson point process with intensity measure $\alpha$ on $\mcx:= \T_1\times(0,\infty)$, where $\T_1$ is the one dimensional torus of length~$1$ endowed with the torus metric given by $d(x,y) = \min\{\abs{x-y}, 1-\abs{x-y}\}$ for $x,y \in \mcx$. We interpret the second component as age, i.e.\ we say that $(y,t)$ is \textit{older} than $(x,s)$ if $t<s$, and define $\Xi_t := \Xi\vert_{\T_1\times [0,t]}$ to be the set of vertices already born at time~$t\geq0$. Furthermore, we assign an orientation to the edges by pointing backwards in time from the younger to the older vertex. 
Then we can construct a sequence of graphs $(\Xi_t,\Sigma_t)_{t>0}$ as follows. Given the graph~$(\Xi_{t-},\Sigma_{t-})$ at time $t-$ and a vertex $(y,t)\in\Xi$, we add the vertex $(y,t)$ and independently of other edges (given $(\Xi_{t-},\Sigma_{t-})$), connect the new vertex~$(y,t)$ to any vertex $(x,s)\in \Xi_{t-}$ with probability $\varphi\bigl(\frac{td(x,y)}{f(Z_x(t-))}\bigr)$ dependent on the indegree $Z_{x}(t-)$ of the older vertex~$(x,s)$ at time $t-$ and the torus distance~$d(x,y)$. The functions $\varphi$ and $f$ are referred to as profile function and attachment rule, respectively, and we refer to \textcite{jacob2015} for further assumptions, which assure, among other things, that outdegrees and indegrees are (almost surely) finite (see Lemma~9 and Corollary~13). 

The following corollary provides a formula to assess some statistic $h$ of a typical vertex in the graph 
by treating the vertex as deterministic and splitting up its edges into (conditionally independent) outgoing and incoming edges.

\begin{corollary}
Let $(\Xi,\Sigma)$ be the graph constructed by the spatial preferential attachment model. 
Consider a function $h:\mfn\times \mcx\times\mfn_2\to\R_+$ that evaluates the edge structure of a (typical) vertex in the spatial preferential attachment model. Then  
\begin{align}\label{eq: spatialPA split up}
    \E\bigg[\int_{\mcx} h(\Xi-\delta_{(x,s)},(x,s),\Sigma\vert_{\langle \Xi-\delta_{(x,s)},(x,s)\rangle}) \, \Xi(\diff (x,s))\bigg] = \int_{\mcx} \E\bigl[h\big(\Xi,(x,s),(\Sigma_2^{<s},\Sigma_2^{>s})\big) \bigr] \, \alpha(\diff (x,s)),
\end{align}
where $\mathcal{L} \bigl( \Sigma_2^{<s},\Sigma_2^{>s} \bigm| \Xi, \Sigma \bigr) = \mathcal{L} \bigl( \Sigma_2^{<s} \bigm| \Xi_{s-}, \Sigma_{s-} \bigr) \otimes \mathcal{L} \bigl( \Sigma_2^{>s} \bigm| \Xi \bigr)$ is the product measure of outgoing and incoming edges for the additional vertex $(x,s)$.
\end{corollary}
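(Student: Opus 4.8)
The plan is to read the identity~\eqref{eq: spatialPA split up} as an instance of the graph GNZ formula, Theorem~\ref{thm:graph_gnz}, applied to the \emph{Poisson} vertex process, and then to resolve the resulting edge marginal by exploiting the time-ordered build-up of the spatial preferential attachment graph. Note that Corollary~\ref{cor: GNZwithoutAdditionalInfluence} does not apply directly: an additional vertex at $(x,s)$ raises the indegrees of older vertices and thereby changes the connection probabilities of edges between the remaining vertices, so the influence assumption fails and the error term in Theorem~\ref{thm:graph_gnz} has to be carried along.

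First I would set $P=\pop(\alpha)$ and let $Q(\xi,\cdot)$ be the law of the edge configuration built by the attachment dynamics on the fixed vertex set $\xi$ --- a well-defined probability kernel, since the assumptions imported from \textcite{jacob2015} ensure that all in- and out-degrees are a.s.\ finite --- so that $(\Xi,\Sigma)\sim P\otimes Q$. As $\Xi$ is Poisson, Condition~\eqref{eq:condsigma} holds and its Papangelou kernel is $L(\diff x\mvert\xi)=\alpha(\diff x)$. Applying Theorem~\ref{thm:graph_gnz} to the function $(\xi,\sigma_1,x,\sigma_2)\mapsto h(\xi,x,\sigma_2)$, which does not depend on the edges $\sigma_1$ among the remaining vertices, identifies the left-hand side of~\eqref{eq: spatialPA split up} with the sum of the two terms on the right of Theorem~\ref{thm:graph_gnz}. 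Writing the first term as an integral against $Q(\xi,\diff\sigma_1)\,P(\diff\xi)\,\alpha(\diff x)$ and adding the error term, whose inner (signed) measure in $\sigma_1$ is $\tilde{Q}_1(\xi,x;\diff\sigma_1)-Q(\xi,\diff\sigma_1)$, the $Q(\xi,\diff\sigma_1)$-parts cancel, and by the disintegration~\eqref{eq:ekernel_disintegration} the two terms collapse, so that the left-hand side of~\eqref{eq: spatialPA split up} becomes
\begin{equation*}
  \int_{\mcx}\E\biggl[\,\int_{\mfn_2} h\bigl(\Xi,(x,s),\sigma_2\bigr)\;\tQ\bigl(\Xi,(x,s);\,\mfn_2\times\diff\sigma_2\bigr)\,\biggr]\,\alpha\bigl(\diff(x,s)\bigr),
\end{equation*}
where $\tQ(\Xi,(x,s);\,\mfn_2\times\cdot)$ is the second marginal of $\tQ(\Xi,(x,s);\cdot)$, i.e.\ --- by the definition of $\tQ$ --- the law of the edges incident to $(x,s)$ in a graph with vertex set $\Xi+\delta_{(x,s)}$ and edges drawn from $Q(\Xi+\delta_{(x,s)};\cdot)$.

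It then remains to show that, given $\Xi=\xi$, this configuration of edges at $(x,s)$ is an independent superposition of an \enquote{outgoing} part with law $\mcl(\Sigma_2^{<s}\mvert\Xi_{s-}=\xi_{s-})$ and an \enquote{incoming} part with law $\mcl(\Sigma_2^{>s}\mvert\Xi=\xi)$. Here I would invoke two features of the construction. Since $(x,s)$ is born at time $s$, the subgraph on the vertices born strictly before $s$ is the same for the vertex sets $\xi$ and $\xi+\delta_{(x,s)}$; conditionally on this subgraph $(\Xi_{s-},\Sigma_{s-})$ the edges from $(x,s)$ to older vertices are created independently with the prescribed probabilities, and marginalising over $\Sigma_{s-}$ --- whose conditional law given $\Xi=\xi$ depends on $\xi$ only through $\xi_{s-}$ --- yields the outgoing law $\mcl(\Sigma_2^{<s}\mvert\Xi_{s-})$. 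For the incoming edges, an edge $(y,t)\to(x,s)$ with $t>s$ is created with probability $\varphi\bigl(td(y,x)/f(Z_x(t-))\bigr)$, and the indegree $Z_x(t-)$ is a function \emph{only} of the incoming edges of $(x,s)$ created so far, since the outgoing edges of $(x,s)$ contribute to the indegrees of older vertices but not to $Z_x$. Hence the incoming-edge process at $(x,s)$ is autonomous: it is a measurable function of $\Xi$ (in fact only of the vertices younger than $s$) together with independent internal randomness, and is in particular conditionally independent, given $\Xi$, of both the outgoing edges of $(x,s)$ and of the rest of the graph. This gives the asserted factorisation and identifies the incoming part as $\mcl(\Sigma_2^{>s}\mvert\Xi)$.

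Finally, on the right-hand side of~\eqref{eq: spatialPA split up} the tower property together with the prescribed conditional law $\mcl(\Sigma_2^{<s}\mvert\Xi_{s-},\Sigma_{s-})\otimes\mcl(\Sigma_2^{>s}\mvert\Xi)$, and the same observation that the pre-$s$ subgraph depends on $\Xi$ only through $\Xi_{s-}$ (so that integrating out $\Sigma_{s-}$ turns $\mcl(\Sigma_2^{<s}\mvert\Xi_{s-},\Sigma_{s-})$ into its marginal $\mcl(\Sigma_2^{<s}\mvert\Xi_{s-})$), reduce the right-hand side to the very same expression obtained above, which completes the proof. The main obstacle is exactly the autonomy of the incoming-edge process at a fixed vertex and its ensuing conditional independence, given $\Xi$, of the outgoing edges and of the remaining graph; making this precise requires carefully unwinding the recursive, time-ordered construction and the indegree bookkeeping, and checking that the conditional laws $\mcl(\Sigma_2^{>s}\mvert\Xi)$ and the pre-$s$ subgraph law are genuinely well defined --- which is where the finiteness assumptions of \textcite{jacob2015} enter.
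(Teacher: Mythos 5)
Your proof is correct and rests on the same two pillars as the paper's own argument: the graph GNZ formula (Theorem~\ref{thm:graph_gnz}) with the Poisson Papangelou kernel $L(\diff x \mvert \xi)=\alpha(\diff x)$, and the time-ordered structure of the preferential attachment construction (the pre-$s$ subgraph is unaffected by adding $(x,s)$ and its law depends on $\xi$ only through $\xi_{s-}$; the incoming edges of $(x,s)$ are driven solely by its own indegree and are therefore conditionally independent, given $\Xi$, of the outgoing edges and of the remaining graph). The only real difference is organisational: the paper writes out $\tQ_{2\mvert1}(\xi,\sigma,(x,s);\cdot)$ explicitly as the product of the outgoing law given $(\xi_{s-},\sigma_{s-})$ and the incoming law given $\xi$, so that the first GNZ term is already the right-hand side of \eqref{eq: spatialPA split up}, and then shows the error term vanishes by the past/future argument; you instead recombine the two GNZ terms into a single integral against $\tQ(\xi,(x,s);\cdot)$ --- effectively reverting to the intermediate step in the proof of Theorem~\ref{thm:graph_gnz}, i.e.\ the plain GNZ formula applied to $\tilh(\xi,x)=\int h \,\tQ(\xi,x;\,d(\sigma_1,\sigma_2))$ --- then factorise its second marginal and pay for this with an extra tower-property step integrating out $\Sigma_{s-}$ on the right-hand side. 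Since both routes invoke exactly the same structural facts about the model, this is a difference in bookkeeping rather than in substance; your merged form is arguably slightly cleaner for the special case where $h$ does not depend on $\sigma_1$, while the paper's term-by-term treatment also covers the more general statement recorded in the subsequent remark.
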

\begin{remark}
  It is straightforward to generalise the proof below to statistics $h$ that also depend on past edges between other vertices, as well as replacing the underlying Poisson process distribution by a Gibbs distribution $P^{\lambda}$. The formula reads then
  \begin{align*}
    \E\bigg[\int_{\mcx} h(\Xi-\delta_{(x,s)}, \Sigma\vert_{\Xi_{s-}^{\atwo}}, (x,s), &\Sigma\vert_{\langle \Xi-\delta_{(x,s)},(x,s)\rangle}) \, \Xi(\diff (x,s))\bigg] \\
    &= \int_{\mcx} \E\bigl[h\big(\Xi, \Sigma\vert_{\Xi_{s-}^{\atwo}}, (x,s),(\Sigma_2^{<s},\Sigma_2^{>s})\big) \, \lambda(x\mvert \Xi)\bigr] \, \alpha(\diff (x,s)).
\end{align*}
\end{remark}
\begin{proof}
The result is a direct consequence of Theorem~\ref{thm:graph_gnz}. 
Due to the iterative construction of the graph, we may write down the kernel~$\tQ_{2\vert1}$, using $\xi_{s-} = \xi\vert_{\T_1\times [0,s)}$ and $\xi_{s} = \xi\vert_{\T_1\times [0,s]}$, as 
\begin{align*}
    \tQ_{2\vert 1}(\xi,\sigma,(x,s);\{\sigma_2\}) 
    &= \prod_{(y,t) \in \xi_{s-}} \varphi\bigg(\frac{s\hbit d(x,y)}{f(Z_y(s-))}\bigg)^{\sigma_2(\{x,y\})}\bigg(1-\varphi\bigg(\frac{s\hbit d(x,y)}{f(Z_y(s-))}\bigg)\bigg)^{1-\sigma_2(\{x,y\})} \\[-1mm]
    &\hspace*{6mm} \times \prod_{(y,t)\in\xi \setminus \xi_s} \varphi\bigg(\frac{t\hbit d(x,y)}{f(Z_x(t-))}\bigg)^{\sigma_2(\{x,y\})}\biggl(1-\varphi\bigg(\frac{t\hbit d(x,y)}{f(Z_x(t-))}\bigg)\biggr)^{1-\sigma_2(\{x,y\})}\\[1mm]
    &=\Prob \bigl( \Sigma_2^{<s}=\sigma^{<s}_2 \bigm| \Xi_{s-} = \xi_{s-}, \Sigma_{s-} = \sigma_{s-} \bigr) \, \Prob \bigl( \Sigma_2^{>s} = \sigma^{>s}_2 \bigm| \Xi  = \xi \bigr),
\end{align*}
where $\sigma^{<s}_2 = \sigma_2 \vert_{\langle \xi_{s-}, (x,s) \rangle}$ and $\sigma^{>s}_2 = \sigma_2 \vert_{\langle \xi \setminus \xi_{s}, (x,s) \rangle}$ are the outgoing and incoming edges of $(x,s)$, respectively.
Thus, the first term of the right-hand side in Theorem~\ref{thm:graph_gnz} equals the right-hand side of Equation~\eqref{eq: spatialPA split up}.

To see that the second term vanishes, note that
$$\tQ_1(\xi,(x,s); \hbit\cdot\hbit) = Q(\xi, \hbit\cdot\hbit) \quad \text{ on } \mfn_2\vert_{(\T\times(0,s))^{\atwo}},$$
as vertices do not influence edges constructed in the past, and
$$\tQ_{2\vert1}(\xi,(x,s), \sigma_1; \hbit\cdot\hbit) = \tQ_{2\vert1}(\xi,(x,s), \sigma_1\vert_{(\T\times(0,s))^{\atwo}}; \hbit\cdot\hbit),$$
as edges incident to $(x,s)$ are not influenced by edges created in the future between other vertices. Thus, if $h$ itself depends only on edges of $\sigma_1$ created in the past,  
$$
  h(\xi, \sigma_1, (x,s), \sigma_2) = h(\xi, \sigma_1\vert_{(\T\times(0,s))^{\atwo}}, (x,s), \sigma_2),
$$
in particular if $h$ does not depend on $\sigma_1$ at all, we have
\begin{align*}
  \int_{\mfn_2} \int_{\mfn_2} &h\bigl(\xi, \sigma_1, x, \sigma_2 \bigr) \; \tilde{Q}_{2 \mvert 1}(\xi,x,\sigma_1 ;\, d\sigma_2) \; \bigl( \tilde{Q}_1(\xi, x;\, d\sigma_1) - Q(\xi, d\sigma_1) \bigr) \\
  &= \int_{\mfn_2\vert_{((\T \times (0,s))^{\atwo})^c}} \int_{\mfn_2} h\bigl(\xi, \emptyset, x, \sigma_2 \bigr) \; \tilde{Q}_{2 \mvert 1}(\xi,x,\emptyset ;\, d\sigma_2) \; \bigl( \tilde{Q}_1(\xi, x;\, d\sigma_1) - Q(\xi, d\sigma_1) \bigr)
  = 0,
\end{align*}
since $\tilde{Q}_1$ and $Q$ are probability measures in their last arguments and we integrate over a constant.
Thus the second term in Theorem~\ref{thm:graph_gnz} vanishes. Note that this is not a consequence of Corollary~\ref{cor: GNZwithoutAdditionalInfluence} as the influence assumption is not fulfilled. 
\end{proof}

\section{Stein's Method} \label{sec: Stein method}
In what follows we derive general approximation results on the distance of two random graph distributions using the distribution ${P^{\lambda} \otimes Q^{\kappa}}$ of the generalised random geometric graph $(\Eta,\Tau)$ as target distribution. More formally, given any random graph $(\Xi,\Sigma)\sim P\otimes Q$, where $\Xi\sim P$ satisfies Condition \eqref{eq:condsigma} and $Q$ is an edge kernel,
we aim to bound the expression
\begin{equation} \label{eq: integral metric}
    d_{\mcf}(P\otimes Q ,P^{\lambda} \otimes Q^{\kappa}) = \sup\limits_{f\in\mcf} \big\vert \E(f(\Xi,\Sigma)) - \E(f(\Eta,\Tau))\big\vert.
\end{equation}
For a large part, we work in terms of a general integral probability metric (IPM)~$d_{\mcf}$, i.e.\ $\mcf$ is any suitable class of integrable test functions $\G \to \R$ (see \cite{zolotarev1984, muller1997}). Only from Subsection~\ref{ssec: bounding stein factors} on, we use Wasserstein metrics by setting $\mcf = \mcf_{\G}$.

To derive a bound for \eqref{eq: integral metric}, we apply Stein's method (see \cite{Stein1972}, for normal approximation; \cite{BarbourBrown1992}, for Poisson process approximation) to spatial random graphs. In the following, we describe how Stein's method can be used in our special setting. We refer to \textcite{BarbourChen2005}, \textcite{Ross2011} and \textcite{S2014} for more thorough introductions to Stein's method.

Following the generator approach by \textcite{Barbour1988}, we set up a Stein equation by using the generator of a Markov process that has the target distribution ${P^{\lambda} \otimes Q^{\kappa}}$ as its stationary distribution. In view of Lemma~\ref{le: stationary distribution} such a generator is $\mcg^{\lambda,\kappa}$ and the resulting
Stein equation reads
\begin{equation}\label{eq: Stein equation}
f(\xi,\sigma) - \E(f(\Eta,\Tau)) = \mcg^{\lambda,\kappa} h_f (\xi,\sigma).    
\end{equation}
We have to solve this in $h_f$ for every $f\in\mcf$, where $h_f$ is not restricted to lie in $\mcf'$ but any $h_f\in \mathcal{F}'$ for which the right hand side in~\eqref{eq: Generator} is well-defined is acceptable. The default candidate for a solution takes the form
\begin{equation} \label{eq: Stein solution}
  h_f(\xi,\sigma) = -\int_{0}^{\infty} \E\bigl[f((\Eta_s,\Tau_s)^{(\xi,\sigma)})\bigr] - \E\bigl[f((\Eta,\Tau))\bigr] \diff s,
\end{equation}
where $((\Eta_s,\Tau_s)^{(\xi,\sigma)})_{s\geq0}$ denotes the Markov process generated by $\mcg^{\lambda,\kappa}$ and started in ${(\xi,\sigma)\in\G}$. We show in Lemma~\ref{le: hf solves Stein equation} that this candidate is indeed well-defined and solves~\eqref{eq: Stein equation}.

Taking expectations for random $(\Xi,\Sigma)$ in~\eqref{eq: Stein equation} and combining the result with our initial expression~\eqref{eq: integral metric}, yields
$$d_{\mcf}(P\otimes Q ,P^{\lambda} \otimes Q^{\kappa}) = \sup\limits_{f\in\mcf} \big\vert \E(f(\Xi,\Sigma)) - \E(f(\Eta,\Tau))\big\vert =  \sup\limits_{f\in\mcf} \big\vert \mcg^{\lambda,\kappa} \E(h_f (\Xi,\Sigma))\big\vert. $$
In particular, we can now analyse the distance between two random graph distributions by considering the expression on the right hand side. One of the tools we use for this is a coupling of two Markov processes that we introduce in the following subsection.  

\subsection{Coupling} \label{ch: coupling}
We construct a coupling of two GBDPs generated by the same $\mcg^{\lambda,\kappa}$ and started in graphs $(\xi,\sigma),(\eta,\tau)\in\G$. This coupling combines the coupling approach of \textcite{stucki2014} for the vertex process with a maximal coupling for the edge process.
Set
\begin{align*}
    \lambda_{\text{max}}(x\mvert\xi,\eta) = \max(\lambda(x\mvert\xi),\lambda(x\mvert\eta))\text{\quad and \quad}
    \overline{\lambda}_{\text{max}}(\xi,\eta) = \int_{\mcx} \lambda_{\text{max}}(x\mvert\xi,\eta)\, \alpha(\diff x)
\end{align*}
and analogously
\begin{align*}
    \lambda_{\text{min}}(x\mvert\xi,\eta) = \min(\lambda(x\mvert\xi),\lambda(x\mvert\eta))\text{\quad and \quad}
    \overline{\lambda}_{\text{min}}(\xi,\eta) = \int_{\mcx} \lambda_{\text{min}}(x\mvert\xi,\eta)\, \alpha(\diff x).
\end{align*}
Let $((\Eta_s^{(\xi)},\Tau_s^{(\xi,\sigma)}),(\Eta_s^{(\eta)},\Tau_s^{(\eta,\tau)}))_{s\geq0}$ be a PJMP started at time $S_0 = 0$ and jumping at times $S_i = \sum_{j=1}^i J_j$ for $i\geq 1$, where $J_1,J_2,\ldots$ denote the holding intervals. Being in a state $((\Eta_{S_{i-1}}^{(\xi)},\Tau_{S_{i-1}}^{(\xi,\sigma)}),(\Eta_{S_{i-1}}^{(\eta)},\Tau_{S_{i-1}}^{(\eta,\tau)})) = ((\xi',\sigma'),(\eta',\tau'))$, we stay for a time $J_i\sim \text{Exp}(\overline{\lambda}_{\text{max}}(\xi',\eta') + \abs{\xi'\cup \eta'})$. After this holding time, the process jumps into a new state $((\Eta_{S_i}^{(\xi)},\Tau_{S_i}^{(\xi,\sigma)}),(\Eta_{S_i}^{(\eta)},\Tau_{S_i}^{(\eta,\tau)}))$ that can be described as follows. Define
\begin{align*}
    G_i &\sim \text{Ber}\bigg(\frac{\overline{\lambda}_{\text{max}}(\xi',\eta')}{\overline{\lambda}_{\text{max}}(\xi',\eta') + \abs{\xi'\cup \eta'}}\bigg)\\
    Y_i &\sim \frac{\lambda_{\text{max}}(\cdot\mvert\xi',\eta')}{\overline{\lambda}_{\text{max}}(\xi',\eta')}\\
    U_i &\sim \text{Unif}(\xi'\cup\eta')\\
    B_{\xi,i}\mvert Y_i &\sim \text{Ber}\bigg(\frac{{\lambda}(Y_i\mvert \xi')}{\lambda_{\text{max}}(Y_i \mvert\xi',\eta')}\bigg)\\
    B_{\eta,i}\mvert Y_i &\sim \text{Ber}\bigg(\frac{{\lambda}(Y_i\mvert \eta')}{\lambda_{\text{max}}(Y_i \mvert\xi',\eta')}\bigg)\\
    E_{x,Y_i}\mvert Y_i &\sim \text{Ber}\big(\kappa(x,Y_i)\big)\quad\quad\quad\text{for all }x\in \xi'\cup\eta'\text{ independently}
\end{align*}
and assume that, conditionally on $Y_i$, the edges $(E_{x,Y_i})_{x \in \xi' \cup \eta'}$ are independent of $(B_{\xi,i},B_{\eta,i})$ and $B_{\xi,i}$ and $B_{\eta,i}$ are maximally coupled, i.e.
$$
  \Prob(B_{\xi,i} = B_{\eta,i} = 1\mvert Y_i) = \frac{\lambda_{\text{min}}(Y_i\mvert\xi',\eta')}{\lambda_{\text{max}}(Y_i\mvert\xi',\eta')}.
$$
The variable $G_i$ determines whether there is a birth or a death event at time~$S_i$. In case of a death event the deleted vertex is given by $U_i$. In case of a birth event, the location of the added vertex is given by $Y_i$ while $B_{\xi,i}$ and $B_{\eta,i}$ indicate whether or not the vertex~$Y_i$ is added to $\xi'$ and $\eta'$, respectively. If the vertex~$Y_i$ is added to $\xi'$ and/or $\eta'$, we connect $Y_i$ to any other vertex~$x$ of the point pattern according to $E_{x,Y_i}$.

More formally, from $((\Eta_{S_{i-1}}^{(\xi)},\Tau_{S_{i-1}}^{(\xi,\sigma)}),(\Eta_{S_{i-1}}^{(\eta)},\Tau_{S_{i-1}}^{(\eta,\tau)})) = ((\xi',\sigma'),(\eta',\tau'))$, we construct the next state as follows.
If $G_i = 1$ (birth), set 
\begin{align*}
    \Eta_{S_i}^{(\xi)} &=\xi' + B_{\xi,i} \, \delta_{Y_i}, & \hspace*{-18mm}
    \Tau_{S_i}^{(\xi,\sigma)} &= \sigma' + B_{\xi,i}\, \sum_{y\in \xi'} E_{y,Y_i} \delta_{\{y,Y_i\}} \quad \text{ and} \\[1mm]
    \Eta_{S_i}^{(\eta)} &= \eta' + B_{\eta,i} \, \delta_{Y_i}, & \hspace*{-18mm}
    \Tau_{S_i}^{(\eta,\tau)} &= \tau' + B_{\eta,i}\, \sum_{y\in \eta'} E_{y,Y_i} \delta_{\{y,Y_i\}}.
\end{align*}
On the other hand, if $G_i = 0$ (death), set
\begin{align*}
    \Eta_{S_i}^{(\xi)} &= \xi' - \1{\{U_i\in \xi'\}} \, \delta_{U_i}, & \hspace*{-10.5mm} 
    \Tau_{S_i}^{(\xi,\sigma)} &= \sigma'\vert_{\Eta_{S_i}^{(\xi)}} = \sigma' - \1{\{U_i\in \xi'\}} \, \sigma' \vert_{\langle \xi' ,U_i\rangle} \quad \text{ and} \\[1mm]
    \Eta_{S_i}^{(\eta)} &= \eta' - \1{\{U_i\in \eta'\}} \, \delta_{U_i}, & \hspace*{-10.5mm} 
    \Tau_{S_i}^{(\eta,\tau)} &= \tau'\vert_{\Eta_{S_i}^{(\eta)}} = \tau' - \1{\{U_i\in \eta'\}} \, \tau' \vert_{\langle \eta' ,U_i\rangle}.
\end{align*}

The following proposition shows that this construction of a pure-jump Markov process yields a coupling of $\mcg^{\lambda,\kappa}$-processes started in graphs $(\xi,\sigma),(\eta,\tau)\in\G$, respectively. 
\begin{proposition}
Assume that the conditional intensity~$\lambda$ fulfils \eqref{eq: stability cond}. Then the processes $(\Eta^{(\xi)},\Tau^{(\xi,\sigma)})$ and $(\Eta^{(\eta)},\Tau^{(\eta,\tau)})$ are PJMPs with generator $\mcg^{\lambda,\kappa}$.
\end{proposition}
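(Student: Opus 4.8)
The plan is to exploit the symmetry of the construction in its two coordinates: it suffices to prove that the first marginal $(\Eta^{(\xi)},\Tau^{(\xi,\sigma)})$ is a PJMP with jump rate $q$ and jump chain kernel $\overline{P}^{\lambda,\kappa}$, from which the identification of the generator with $\mcg^{\lambda,\kappa}$ follows by the standard correspondence between pure-jump Markov processes and their generators (Section~4.2 of \textcite{ethier2009}); non-explosivity on all of $[0,\infty)$ is guaranteed exactly as for the GBDP in Section~\ref{ch: kappa geom RG}, since \eqref{eq: stability cond} gives $\overline{\lambda}_{\text{max}}(\xi',\eta')\leq\int\lambda(x\mvert\xi')\,\alpha(\diff x)+\int\lambda(x\mvert\eta')\,\alpha(\diff x)$ uniformly bounded, with the death rate linear in $\abs{\xi'\cup\eta'}$. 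The idea is to realise $(\Eta^{(\xi)},\Tau^{(\xi,\sigma)})$ as a thinning of the bivariate chain: only some of the jump times $S_i$ are jump times of the first coordinate, and after discarding the others the holding times should become $\mathrm{Exp}(q(\xi'))$ with transitions governed by $\overline{P}^{\lambda,\kappa}$.

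First I would fix a bivariate state $((\xi',\sigma'),(\eta',\tau'))$, condition on a jump at $S_i$, and split into the \emph{visible} events that change the first coordinate, namely $\{G_i=1,\,B_{\xi,i}=1\}$ (a birth of $Y_i$ in $\xi'$ together with its incident edges drawn from the $E_{\cdot,Y_i}$) and $\{G_i=0,\,U_i\in\xi'\}$ (a death of $U_i$ in $\xi'$, removing all incident edges), versus the remaining \emph{invisible} events. For a visible birth, multiplying the birth probability $\overline{\lambda}_{\text{max}}(\xi',\eta')\big/\bigl(\overline{\lambda}_{\text{max}}(\xi',\eta')+\abs{\xi'\cup\eta'}\bigr)$, the location density $\lambda_{\text{max}}(\cdot\mvert\xi',\eta')/\overline{\lambda}_{\text{max}}(\xi',\eta')$ and the factor $\lambda(Y_i\mvert\xi')/\lambda_{\text{max}}(Y_i\mvert\xi',\eta')=\Prob(B_{\xi,i}=1\mvert Y_i)$ makes the $\lambda_{\text{max}}$-terms cancel, so a vertex appears in the first coordinate at $y$ with intensity $\lambda(y\mvert\xi')\,\alpha(\diff y)\big/\bigl(\overline{\lambda}_{\text{max}}(\xi',\eta')+\abs{\xi'\cup\eta'}\bigr)$ and, conditionally on $y$, edges distributed as $Q_2^{\kappa}(\xi',y;\cdot)$. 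For a visible death each $x\in\xi'$ is removed with conditional probability $1\big/\bigl(\overline{\lambda}_{\text{max}}(\xi',\eta')+\abs{\xi'\cup\eta'}\bigr)$, together with all incident edges. Summing these contributions, the conditional probability that the first coordinate jumps at all is $q(\xi')\big/\bigl(\overline{\lambda}_{\text{max}}(\xi',\eta')+\abs{\xi'\cup\eta'}\bigr)$, and conditionally on jumping the new first-coordinate state is exactly a draw from $\overline{P}^{\lambda,\kappa}((\xi',\sigma'),\cdot)$; in particular the holding time and the transition law do not depend on $\sigma'$, as in \eqref{eq: Generator}.

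Next I would pass from the bivariate holding-time structure to that of the first coordinate. Between two successive first-coordinate jumps the pair $(\xi',\sigma')$ is frozen while $\eta'_t$ evolves; the bivariate chain jumps at the time-varying rate $r(t)=\overline{\lambda}_{\text{max}}(\xi',\eta'_t)+\abs{\xi'\cup\eta'_t}$, and each such jump is a first-coordinate jump with the conditional probability $q(\xi')/r(t)$ computed above, independently of the corresponding holding time (all of $G_i,Y_i,U_i,B_{\xi,i},B_{\eta,i}$ and the $E_{\cdot,\cdot}$ are drawn independently of $J_i$). Hence the instantaneous rate of first-coordinate jumps is $r(t)\cdot q(\xi')/r(t)=q(\xi')$ at all times, \emph{irrespective of the second-coordinate trajectory}, so by memorylessness of the exponential holding times and a thinning argument for the resulting jump stream, started in $(\xi',\sigma')$ the first coordinate waits an $\mathrm{Exp}(q(\xi'))$ time, independently of the past and of the other coordinate, and then jumps according to $\overline{P}^{\lambda,\kappa}((\xi',\sigma'),\cdot)$ — precisely the PJMP with generator $\mcg^{\lambda,\kappa}$. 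By symmetry the same holds for $(\Eta^{(\eta)},\Tau^{(\eta,\tau)})$.

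I expect the last step to be the main obstacle: making it rigorous that discarding the invisible bivariate jumps leaves a genuine time-homogeneous Markov PJMP for the first coordinate with the stated holding-time law. The crux is the cancellation $r(t)\cdot q(\xi')/r(t)=q(\xi')$, which shows that the dependence on the fluctuating $\eta'_t$ drops out; turning this into a clean statement requires carefully isolating the independent randomness in the construction and invoking the thinning property of Poisson processes on the time axis (equivalently, a competing-clocks computation over the geometric number of invisible jumps between consecutive visible ones). Once this is in place, identifying the generator with $\mcg^{\lambda,\kappa}$ through \eqref{eq: Generator} and extending the whole construction to $[0,\infty)$ under \eqref{eq: stability cond} is routine.
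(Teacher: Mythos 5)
Your argument is correct, but it takes a genuinely different route from the paper. The paper proves the proposition by a small-time semigroup expansion: using $\Prob(J_1>s)=1-a(\xi,\eta)s+\mathcal{O}(s^2)$ and $\Prob(J_1+J_2\leq s)=\mathcal{O}(s^2)$ (borrowed from the analogous coupling result for Gibbs processes), it conditions on at most one bivariate jump in $[0,s]$, computes $\lim_{s\to0}s^{-1}\bigl(\E\,h\bigl(\Eta_s^{(\xi)},\Tau_s^{(\xi,\sigma)}\bigr)-h(\xi,\sigma)\bigr)$, and reads off $\mcg^{\lambda,\kappa}h(\xi,\sigma)$ after exactly the cancellation you exploit, namely $\frac{\lambda(y\mvert\xi')}{\lambda_{\text{max}}(y\mvert\xi',\eta')}\cdot\frac{\lambda_{\text{max}}(y\mvert\xi',\eta')}{\overline{\lambda}_{\text{max}}(\xi',\eta')}\cdot\overline{\lambda}_{\text{max}}(\xi',\eta')=\lambda(y\mvert\xi')$ together with the death analogue $\abs{\xi'\cup\eta'}\cdot\abs{\xi'}/\abs{\xi'\cup\eta'}=\abs{\xi'}$. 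You instead identify the marginal pathwise: each bivariate jump is ``visible'' for the first coordinate with conditional probability $q(\xi')/\bigl(\overline{\lambda}_{\text{max}}(\xi',\eta')+\abs{\xi'\cup\eta'}\bigr)$, and conditionally on visibility the new first-coordinate state is $\overline{P}^{\lambda,\kappa}((\xi',\sigma'),\cdot)$-distributed; these computations are correct. What your route buys is a more transparent identification of the marginal as a time-homogeneous PJMP with jump rate $q$ and jump chain $\overline{P}^{\lambda,\kappa}$; what it costs is the thinning lemma you rightly flag as the crux. That lemma is true and not hard to make rigorous, but not via ``a geometric number of invisible jumps'' with a fixed parameter (the visibility probability fluctuates with $\eta_t'$): either verify that $u(t,\eta')=\Prob(\text{no visible jump in }[0,t])$ satisfies the renewal-type integral equation $u(t,\eta')=e^{-r(\eta')t}+\int_0^t r(\eta')e^{-r(\eta')s}\frac{r(\eta')-q(\xi')}{r(\eta')}\,\E\bigl[u(t-s,\eta'_{\text{new}})\bigr]\diff s$ with $r(\eta')=\overline{\lambda}_{\text{max}}(\xi',\eta')+\abs{\xi'\cup\eta'}$, whose unique bounded solution is $e^{-q(\xi')t}$, or re-realise the construction with two competing clocks per state (a visible one of constant rate $q(\xi')$ and an invisible one of rate $r(\eta')-q(\xi')$), noting that by memorylessness and the constancy of the visible rate the visible clock need not be resampled at invisible jumps, so the waiting time is $\mathrm{Exp}(q(\xi'))$ independent of the $\eta$-trajectory. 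Combined with the strong Markov property of the bivariate chain at visible jump epochs and non-explosivity under \eqref{eq: stability cond}, this completes your proof; the paper's expansion avoids this extra lemma at the price of a terser justification that the marginal's infinitesimal behaviour does not depend on the second coordinate.
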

\begin{proof}
Note that the holding times $J_1,J_2,\ldots$ only depend on the vertex processes $\Eta^{(\xi)}$ and $\Eta^{(\eta)}$. Thus, from the proof of Proposition~35 in \textcite{stucki2014},
\begin{align*}
    \Prob(J_1 >s ) = 1- a(\xi,\eta) s + \mathcal{O}(s^2)\quad \text{and }\quad \Prob(J_1+J_2 \leq s) = \mathcal{O}(s^2) \quad \text{ as $s\to 0$},
\end{align*}
where $a(\xi,\eta) = \overline{\lambda}_{\text{max}}(\xi,\eta) + \abs{\xi\cup\eta}\leq c + \abs{\xi\cup\eta}$ for some $c>0$, which exists by the stability assumption~\eqref{eq: stability cond}.
Let $h :\G \to\R$ be bounded and measurable. Then
\begin{align*}
    &\E\bigl[h\bigl(\Eta_s^{(\xi)},\Tau_s^{(\xi,\sigma)}\bigr)\bigr]\\
    &= \Prob(J_1\leq s, J_2>s)\, \E\bigl[h\bigl(\Eta_{J_1}^{(\xi)},\Tau_{J_1}^{(\xi,\sigma)}\bigr)\bigr] +  \Prob(J_1>s)\, h(\xi,\sigma) + \mathcal{O}(s^2)\\
    &= a(\xi,\eta) \hbit s \hbit \Bigl[\E h\bigl(\xi + B_{\xi,1} \, \delta_{Y_1}, \, \sigma + B_{\xi,1}\, \tsum_{y\in  \xi} E_{y,Y_1} \delta_{\{y,Y_1\}}\bigr)\,\Prob(G_1 = 1)\\
    &\hspace{5mm}+ \E h\bigl(\xi -  \1{\{U_1\in \xi\}} \, \delta_{U_1}, \, \sigma - \1{\{U_1\in \xi\}} \, \sigma\vert_{\langle \xi, U_1\rangle} \bigr)\,\Prob(G_1 = 0) \!\Bigr]+ (1- a(\xi,\eta))\,s \, h(\xi,\sigma) + \mathcal{O}(s^2).
\end{align*}
The independence of $E_{y,Y_1}\mvert Y_1$ for $y\in\xi$ yields that $\sum_{y\in  \xi} E_{y,Y_1} \delta_{\{y,Y_1\}}\mvert Y_1 \sim Q_2^{\kappa}(\xi,Y_1;\cdot)$ (see Section~\ref{ch: kappa geom RG}). Hence, we obtain
\begin{align*}
    &\lim_{s\to 0 } \frac{\E\big[h\big(\Eta_s^{(\xi)},\Tau_s^{(\xi,\sigma)}\big)\big] - h(\xi,\sigma)}{s} \\
    &= a(\xi,\eta)\,\Prob(G_1 = 1)\, \E \bigl[\bigl(h\bigl(\xi +\delta_{Y_1},\sigma +  \tsum_{y\in  \xi} E_{y,Y_1} \delta_{\{y,Y_1\}}\bigr) - h(\xi,\sigma)\bigr)  B_{\xi,1} \bigr]\\
    &\hspace{4mm}+ a(\xi,\eta)\,\Prob(G_1 = 0)\, \E \bigl[ h\bigl(\xi -  \1{\{U_1\in \xi\}} \, \delta_{U_1} ,\sigma - \1{\{U_1\in \xi\}} \, \sigma\vert_{\langle \xi ,U_1\rangle} \bigr) - h(\xi,\sigma) \bigr]\\[0.5mm]
    &= \overline{\lambda}_{\text{max}}(\xi,\eta) \int_{\mcx} \int_{\mfn_2} h(\xi + \delta_y, \sigma + \tilde{\sigma}) - h(\xi,\sigma) \; Q_2^{\kappa}(\xi,y;\diff \tilde{\sigma}) \,\frac{\lambda(y\mvert\xi)}{\lambda_{\text{max}}(y\mvert \xi,\eta)} \,\frac{\lambda_{\text{max}}(y\mvert \xi,\eta)}{\overline{\lambda}_{\text{max}}(\xi,\eta)}  \, \alpha(\diff y)\\
    &\hspace{4mm}+ \abs{\xi\cup \eta} \, \int_{\mcx} h(\xi -  \1{\{u\in \xi\}} \, \delta_{u} ,\sigma - \1{\{u\in \xi\}} \, \sigma\vert_{\langle \xi ,u\rangle} ) - h(\xi,\sigma)\, \frac{\xi\cup \eta}{\abs{\xi\cup \eta}}(\diff u)\\
    &= \mcg^{\lambda,\kappa}h(\xi,\sigma).
\end{align*}
\vspace*{-12mm}

\hspace*{2mm}
\end{proof}
Define the \emph{coupling time} $\tau_{(\xi,\sigma),(\eta,\tau)} := \inf\bigl\{s\geq0 \bigm| \bigl(\Eta_s^{(\xi)},\Tau_s^{(\xi,\sigma)}\bigr) = \bigl(\Eta_s^{(\eta)},\Tau_s^{(\eta,\tau)} \bigr) \bigr\}$ and refer to $\varrho((\xi,\sigma),(\eta,\tau)) := \bigabs{(\xi\cup \eta)\setminus \bigl\{x\in\xi\cap\eta \bigm| \sigma(\{x,y\}) = \tau(\{x,y\})\;\forall y\in\xi\cap\eta \bigr\}}$ as \emph{graph difference}. The latter is the number of vertices appearing either in only one of the graphs or appearing in both but having different edge structures \emph{within $\xi\cap\eta$}.

\begin{remark} \label{re: coupling in Poi case and edge case}\ 
\begin{enumerate}
    \item[a)] In the special case where the conditional intensity $\lambda$ does not depend on the vertex pattern~$\xi$, i.e.\ $\lambda(\cdot\mvert \xi) = \lambda(\cdot)$, a vertex is (in the case of a birth event) always added to both graphs, i.e.\ $B_{\xi,i} = B_{\eta,i} = 1$ for all $i\in\N$. Writing $(\zeta, \varepsilon) = (\xi \cap \eta, \sigma\cap\tau)$, the coupling then reduces to
\begin{alignat*}{1}
    \Eta_s^{({\xi})} &= \Eta_s^{(\zeta)} + \sum_{x\in\xi\setminus\zeta}\delta_x \1{\{L_x > s\}}\\
    \Tau_s^{(\xi,\sigma)} &= \Tau_s^{(\zeta,\varepsilon)} + \sum_{{\{x,y\}\in\sigma\setminus\varepsilon}} \delta_{\{x,y\}}\1{\{L_x > s\}} \hbit \1{\{L_y > s\}} + \sum_{{x\in\xi\setminus\zeta}} \Tau_{\Eta_s^{(\zeta)}\setminus \zeta,x}\,\1{\{L_x > s\}}
\end{alignat*}
and the corresponding expressions for $H_s^{(\eta)}$, $\Tau_s^{(\eta,\tau)}$,
where $L_x$ denote the independent Exp$(1)$-distributed lifetimes of the vertices $x\in {\xi\cup \eta}$ and we use $\Tau_{\chi,x}$ again to denote an independent random element on $\mfn_2$ with distribution
$Q_2^{\kappa}(\chi,x;\cdot)$.
The coupling time is then bounded from above by the maximum lifetime $L_x$ over all vertices $x$ counted by the quantity $\varrho((\xi,\sigma),(\eta,\tau))$. Therefore
$$\Prob(\tau_{(\xi,\sigma),(\eta,\tau)} \leq s) \geq \bigl(1-e^{-s}\bigr)^{\varrho((\xi,\sigma),(\eta,\tau))}$$
and equality holds if and only if the edge structure within $\xi \cap \eta$ is the same in both graphs.
\item[b)] The (possibly) different edge structure in a graph coupling does not affect the coupling of the vertex processes.
In particular, for $\xi = \eta$, the graph coupling consists of two identical vertex processes and a vertex (in the case of a birth event) is always added to both graphs, i.e.\ $B_{\xi,i} = B_{\eta,i} = 1$ for all $i\in\N$. Writing $\varepsilon = \sigma\cap\tau$, the coupling reduces to
\begin{alignat*}{1}
    \Eta_s^{({\xi})} &=  \Eta_s^{(\eta)} \\
    \Tau_s^{(\xi,\sigma)} &= \Tau_s^{(\xi,\varepsilon)} + \sum_{{\{x,y\}\in\sigma\setminus\varepsilon}} \delta_{\{x,y\}} \1{\{L_x > s\}} \hbit \1{\{L_y > s\}},
\end{alignat*}
and the corresponding expression for $\Tau_s^{(\eta,\tau)}$,
where $L_x$ denote the lifetimes of the vertices $x\in \xi$. By the same reasoning as above, 
$$
\Prob(\tau_{(\xi,\sigma),(\xi,\tau)} \leq s) \geq \bigl(1-e^{-s}\bigr)^{\varrho((\xi,\sigma),(\xi,\tau))}.$$
If every vertex $x \in \xi$ has at most one incident edge that is not in both $\sigma$ and $\tau$, the lifetimes of the edge are independent Exp$(2)$-distributed random variables. So
\begin{align} \label{eq: coupling time for pure edge diff}
  \Prob(\tau_{(\xi,\sigma),(\xi,\tau)} \leq s) = \bigl(1-e^{-2s}\bigr)^{\varrho((\xi,\sigma),(\xi,\tau))/2}.
\end{align}
\end{enumerate}
\end{remark}

The observation that the edge structure has no influence on the coupling of the vertex processes and the fact that $\varrho((\xi,\sigma),(\eta,\tau)) \leq \abs{\xi\cup\eta}$ allow us to adapt the coupling time results of \textcite{stucki2014} to our setting. Denote by $\norm{\cdot}$ the total variation norm of signed measures, i.e.\ $\norm{\xi-\eta}$ denotes the number of points in the symmetric difference of the point patterns.

\begin{theorem} \label{thm: coupling time}
Assume that $\lambda$ fulfils \eqref{eq: stability cond}.
For all $(\xi,\sigma),(\eta,\tau)\in \mathbb{G}$ the coupling time $\tau_{(\xi,\sigma),(\eta,\tau)}$ is integrable. In particular, if $\norm{\xi-\eta} = 1$, we have for any $n^*\in\N\cup\{\infty\}$
\begin{align*}
    \E(\tau_{(\xi,\sigma),(\eta,\tau)}) \leq (n^*-1)! \,\bigg(\frac{\varepsilon}{c}\bigg)^{n^*-1} \bigg(\frac{1}{c} \sum_{i=n^*}^{\infty} \frac{c^i}{i!} + \int_0^c \frac{1}{s}\sum_{i=n^*}^{\infty} \frac{s^i}{i!}\diff s\bigg) + \frac{1+\varepsilon}{\varepsilon}\sum_{i=1}^{n^*-1} \frac{\varepsilon^i}{i} =: B^{*},
\end{align*}
where
$$\varepsilon = \sup_{\norm{\tilde{\xi}-\tilde{\eta}} \, = \, 1}\int_{\mcx} \abs{\lambda(x\mvert\tilde{\xi}) - \lambda(x\mvert \tilde{\eta})} \, \alpha(\diff x)$$
and
$$c = c(n^*) = \sup_{\abs{\tilde{\xi} \cup \tilde{\eta}} \, \geq \, n^*}\int_{\mcx}\abs{\lambda(x\mvert\tilde{\xi}) - \lambda(x\mvert\tilde{\eta})} \, \alpha(\diff x).$$
For $n^* = \infty$, the bound is interpreted as $\E(\tau_{(\xi,\sigma),(\eta,\tau)}) \leq \frac{1+\eps}{\eps} (e^{\eps}-1)$. If any of $\eps$ and $c$ are zero, the bound is to be understood as the corresponding limit.
\end{theorem}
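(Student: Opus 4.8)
The plan is to reduce the coupling time for graphs to a first-passage problem for the \emph{graph difference} $\varrho_s:=\varrho\bigl((\Eta_s^{(\xi)},\Tau_s^{(\xi,\sigma)}),(\Eta_s^{(\eta)},\Tau_s^{(\eta,\tau)})\bigr)$, which vanishes exactly at the coupling time, and then to invoke the coupling-time analysis of \textcite{stucki2014} for the vertex part. The first step is to record how $\varrho_s$ evolves along the coupled dynamics. A death event removes a vertex drawn uniformly from $\Eta_s^{(\xi)}\cup\Eta_s^{(\eta)}$; if it belongs to the ``bad set'' of size $\varrho_s$ witnessing the difference, then $\varrho$ drops by at least one, so $\varrho_s$ decreases by at least one at rate at least $\varrho_s$. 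At a birth, the new vertex $Y_i$ is, by construction, added either to both point patterns --- in which case its edges within the common vertex set are generated from the same Bernoulli variables $E_{\cdot,Y_i}$, by~\eqref{eq:kappa_grg_product_formula_prelim}, so that $\varrho$ is unchanged --- or to neither (again no change), or to exactly one of them (a \emph{split birth}), which raises $\varrho$ by one. By the maximal coupling of $B_{\xi,i}$ and $B_{\eta,i}$, a split birth occurs at rate at most $\int_{\mcx}\bigabs{\lambda(x\mvert\Eta_s^{(\xi)})-\lambda(x\mvert\Eta_s^{(\eta)})}\,\alpha(\diff x)$, which in particular is $0$ once the vertex processes agree.

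For the general integrability statement I would bound this splitting rate crudely, using~\eqref{eq: stability cond}, by $\int_{\mcx}\bigabs{\lambda(x\mvert\xi)-\lambda(x\mvert\eta)}\,\alpha(\diff x)\le 2\sup_{\zeta\in\mfn}\int_{\mcx}\lambda(x\mvert\zeta)\,\alpha(\diff x)<\infty$. Then $\varrho_s$ is stochastically dominated by an $M/M/\infty$-type birth-and-death chain, absorbed at $0$, with a bounded arrival rate and unit per-customer departure rate; such a chain is positive recurrent and reaches $0$ in finite expected time from any initial value, and since $\varrho_0\le\abs{\xi\cup\eta}<\infty$ this gives $\E(\tau_{(\xi,\sigma),(\eta,\tau)})<\infty$ for all starting graphs.

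For the explicit bound I would use the sharper transition estimates available when $\norm{\xi-\eta}=1$. Telescoping over single-point modifications gives $\int_{\mcx}\bigabs{\lambda(x\mvert\tilde\xi)-\lambda(x\mvert\tilde\eta)}\,\alpha(\diff x)\le\eps\,\norm{\tilde\xi-\tilde\eta}$, while for $\abs{\tilde\xi\cup\tilde\eta}\ge n^*$ the same quantity is at most $c=c(n^*)$. Moreover the $E_{\cdot,Y_i}$-argument shows that a common vertex is always born with its edges to other common vertices coupled, so the number of common vertices carrying a discrepant common edge is non-increasing; in the case $\varrho_0=1$ (in particular when $\norm{\xi-\eta}=1$ and $\sigma=\tau$ on $(\xi\cap\eta)^{\atwo}$, as holds in our applications of the bound) one has $\varrho_s=\norm{\Eta_s^{(\xi)}-\Eta_s^{(\eta)}}$ throughout, and the graph coupling time equals the vertex coupling time. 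Since $\norm{\Eta_s^{(\xi)}-\Eta_s^{(\eta)}}\le\varrho_s$, the vertex difference is dominated by the discrepancy chain of \textcite{stucki2014} whose upward rate is controlled by $\eps$ while the configuration has fewer than $n^*$ points and by $c$ afterwards. Computing its expected first passage to $0$ by conditioning on whether the configuration ever reaches $n^*$ points then yields the term $\frac{1+\eps}{\eps}\sum_{i=1}^{n^*-1}\eps^i/i$ from the excursions below this level and the remaining term of $B^*$ from the behaviour above it; this is exactly Stucki's coupling-time computation, which adapts once the transition bounds above are established.

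The main obstacle is controlling the feedback between discrepancy-creating split births and discrepancy-clearing deaths: the naive bound of $\tau_{(\xi,\sigma),(\eta,\tau)}$ by the maximum of $\varrho_0$ independent $\mathrm{Exp}(1)$ vertex lifetimes fails because new discrepancies keep being produced, so one genuinely needs the delicate birth-and-death/renewal computation with the free optimisation parameter $n^*$. A secondary but necessary point is the careful bookkeeping showing that split births and deaths of ``good'' vertices never increase $\varrho$ and that discrepancies of edges within $\xi\cap\eta$ do not proliferate --- this is where the inequality $\varrho((\xi,\sigma),(\eta,\tau))\le\abs{\xi\cup\eta}$ enters and brings the problem into the scope of \textcite{stucki2014}.
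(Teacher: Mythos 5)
Your proposal is correct and rests on the same core mechanism as the paper's proof: follow the graph difference $\varrho$ along the coupled dynamics, note that only split births can increase it (by exactly one, at rate $\int_{\mcx}\abs{\lambda(x\mvert\tilde{\xi})-\lambda(x\mvert\tilde{\eta})}\,\alpha(\diff x)$ thanks to the maximal coupling), that each of the $\varrho$ discrepant vertices dies at rate one and that no death can increase $\varrho$, and then dominate by the birth-and-death chain of \textcite{stucki2014} and bound its expected hitting time of $0$. You deviate in two respects. First, for plain integrability you dominate $\varrho$ by a chain with immigration rate bounded via \eqref{eq: stability cond}; this is cruder than, but just as valid as, the paper's reuse of the same dominating chain. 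Second, for the explicit bound you restrict to $\varrho_0=1$, in which case $\varrho_s$ coincides with the vertex discrepancy $\norm{\Eta_s^{(\xi)}-\Eta_s^{(\eta)}}$ and Stucki's Theorem~37 applies verbatim (even with his smaller $c$, a supremum over $\norm{\tilde{\xi}-\tilde{\eta}}\geq n^*$), whereas the paper dominates the $\varrho$-process for arbitrary initial graphs, which is precisely why its $c$ is redefined as a supremum over $\abs{\tilde{\xi}\cup\tilde{\eta}}\geq n^*$. Your restriction is narrower than the literal hypothesis ``$\norm{\xi-\eta}=1$'', which allows mismatched edges within $\xi\cap\eta$ and hence $\varrho_0>1$ (where the stated bound can in fact fail: for $\eps=c=0$ a single initial mismatched common edge already yields expected coupling time $7/6>1=B^*$); but it matches the intended reading and every application in the paper, since $\triangle_V$ compares $(\xi,\sigma)$ with $(\xi+\delta_x,\sigma+\sigma_2)$ and thus has $\varrho_0=1$, so this is a clarification rather than a gap. (A trivial slip: under the maximal coupling at least one of $B_{\xi,i},B_{\eta,i}$ equals $1$, so the ``added to neither'' case you list has probability zero; this does not affect the argument.)
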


Note that the bound is exactly the same as in \textcite{stucki2014} except that $c$ is defined as the supremum over $\tilde{\xi}, \tilde{\eta} \in \mfn$ with $\abs{\tilde{\xi} \cup \tilde{\eta}} \geq n^*$ rather than $\|\tilde{\xi} - \tilde{\eta}\| \geq n^*$.

\begin{proof}
The proof is similar to that of Theorem~37 in \textcite{stucki2014}, but some careful adjustments are needed.

We call a death event in our coupled GBDP a ``good death'' if it reduces the graph difference~$\varrho$, i.e.\ the number of vertices that either appear in only one of the coupled graphs or appear in both but have different edge structures within the set of common vertices. Due to the edge structure, a good death may reduce the graph difference by more than one. Furthermore, we call a birth event a ``bad birth'' if the graph difference is increased. Such an increase can only happen if the new vertex is added in only one of the coupled processes (if it were added in both, due to the fact that any added edges to common vertices are the same, $\varrho$ would not change). Since, with probability one, a vertex is not born at a position where the coupled processes already have vertices, a bad birth can increase the graph difference by only one as the added edges do not change the edge structure within the common points. 

Assume that $\varrho \bigl((\xi,\sigma)(\eta,\tau)\bigr) = n$ and let $\tau_1 = \inf\bigl\{t\geq 0 \bigm| \varrho \bigl((\Eta_{t}^{(\xi)},\Tau_{t}^{(\xi,\sigma)}),(\Eta_{t}^{(\eta)},\Tau_{t}^{(\eta,\tau)}) \bigr) \neq n \bigr\}$ be the time of the first jump in $\varrho$. Then $A_n= \bigl\{\varrho\bigl((\Eta_{\tau_1}^{(\xi)},\Tau_{\tau_1}^{(\xi,\sigma)}),(\Eta_{\tau_1}^{(\eta)},\Tau_{\tau_1}^{(\eta,\tau)}) \bigr) \leq n-1\bigr\}$ is the event that this first jump is due to a good death.
We can adapt Lemma~36 of \textcite{stucki2014} by simply replacing the total variation distance by the $\varrho$-difference to obtain the lower bound
\begin{align} \label{eq: good death bound}
    \Prob(A_n) \geq \bigg(1 + \frac{1}{n} \sup_{\varrho((\tilde{\xi},\tilde{\sigma}),(\tilde{\eta},\tilde{\tau})) = n}\int_{\mcx}\bigabs{\lambda(x\mvert\tilde{\xi}) - \lambda(x\mvert\tilde{\eta})} \, \alpha(\diff x)\bigg)^{-1} >0.
\end{align}

By bounding the right hand side further, we obtain $\Prob(A_n) \geq (1+c(n^*)/n)^{-1}$ 
for $n \geq n^*$ due to the fact that $\varrho((\tilde{\xi},\tilde{\sigma}),(\tilde{\eta},\tilde{\tau})) = n$ implies  $\abs{\tilde{\xi} \cup \tilde{\eta}} \geq n$. We use $\Prob(A_n) \geq (1+\eps)^{-1}$ for $n < n^*$, which follows (for general $n \in \N$) from
\begin{align*} 
  \sup_{\varrho((\tilde{\xi},\tilde{\sigma}),(\tilde{\eta},\tilde{\tau})) = n}\int_{\mcx}\bigabs{\lambda(x\mvert\tilde{\xi}) - \lambda(x\mvert\tilde{\eta})} \, \alpha(\diff x) \leq \sup_{\norm{\tilde{\xi} - \tilde{\eta}} \leq n} \int_{\mcx}\bigabs{\lambda(x\mvert\tilde{\xi}) - \lambda(x\mvert\tilde{\eta})} \, \alpha(\diff x) \leq n \eps.
\end{align*}

In the same way as in the proofs of Theorem~37 and Lemma~38 in \textcite{stucki2014}, we may now couple $\X = \bigl( \varrho((\Eta_{t}^{(\xi)},\Tau_{t}^{(\xi,\sigma)}), (\Eta_{t}^{(\eta)}, \Tau_{t}^{(\eta,\tau)}))\bigr)_{t \geq 0}$ with a birth-and-death process $\Y$ on $\Z_{+}$ that has expected holding time $1/n$ at state $n$ and jumps up with probability given by the lower bound on $\Prob(A_n)$ in such a way that $\Y$ dominates $\X$ at all times. As a consequence, the first hitting time $\tilde{\tau}_n$ of $\Y$ at zero  dominates $\tau_{(\xi,\sigma),(\eta,\tau)}$, so that the bound on $\E \tau_{(\xi,\sigma),(\eta,\tau)}$ can be obtained by applying standard techniques for computing expected hitting times in PJMPs. 

Although the process $\X$ has somewhat different transition probabilities than the process $\bigl( \norm{\Eta_{t}^{(\xi)}-\Eta_{t}^{(\eta)}} \bigr)_{t \geq 0}$ considered in \textcite{stucki2014}, the dominating process $\Y$ is exactly the same (except for the slightly redefined $c^*$). This is because we have the same bounds on $\Prob(A_n)$ and because a bad birth increases the process $\X$ only by one, whereas larger decreases due to good deaths do no compromise domination. Overall the result is the same as in \textcite{stucki2014} but with redefined $c^*$.
\end{proof}

\begin{remark}[Special cases] \label{re: special cases coupling time}
For easy access, we resume here the observations from \textcite[Remark~6]{stucki2014}. 
    \begin{itemize}
        \item[a)] If we use a bound for $c$ does not depend on $n^*$ the choice $n^* = \lceil c/\varepsilon\rceil$ yields an optimal bound.
        \item[b)]  Assume that $\varepsilon<1$ and $\varrho((\xi,\sigma),(\eta,\tau)) = 1$. Then setting $n^* = \infty$ yields
        $$\E(\tau_{(\xi,\sigma),(\eta,\tau)}) \leq \frac{1+\varepsilon}{\varepsilon} \log\bigg(\frac{1}{1-\varepsilon}\bigg)\leq \frac{1+\varepsilon}{1-\varepsilon}.$$\\[-10mm]
        \item[c)] In the special cases of Remark~\ref{re: coupling in Poi case and edge case}, we have $\eps=0$ and therefore $\E(\tau_{(\xi,\sigma),(\eta,\tau)}) \leq 1$ if
        $\varrho((\xi,\sigma),(\eta,\tau)) = 1$.
    \end{itemize}
\end{remark}

\begin{proposition}\label{prop: coupling time for random graphs}
Let $(\Xi,\Sigma)$ and $(\Eta,\Tau)$ be two random graphs (not necessarily with the same distribution) whose vertex processes fulfil the stability condition \eqref{eq: stability cond}. Then the coupling time $\tau_{(\Xi,\Sigma),(\Eta,\Tau)} = \inf\bigl\{s\geq0 \bigm| (\Eta_s^{(\Xi)},\Tau_s^{(\Xi,\Sigma)}) = (\Eta_s^{(\Eta)},\Tau_s^{(\Eta,\Tau)}) \bigr\}$ of the randomly initialised processes 
is integrable regardless of the joint distribution of $(\Xi,\Sigma)$ and $(\Eta,\Tau)$.
\end{proposition}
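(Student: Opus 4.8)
The plan is to condition on the two starting graphs, to bound $\E\bigl(\tau_{(\xi,\sigma),(\eta,\tau)}\bigr)$ for fixed $(\xi,\sigma),(\eta,\tau)\in\G$ by a constant times the graph difference $\varrho\bigl((\xi,\sigma),(\eta,\tau)\bigr)$, and then to average, using that the stability condition~\eqref{eq: stability cond} for the two vertex processes forces $\abs{\Xi}$ and $\abs{\Eta}$ to be integrable. The coupled process of Section~\ref{ch: coupling} is a measurable functional of the pair of initial graphs together with an independent family of holding times, birth/death and coupling indicators, birth locations $Y_i$ and edge indicators; hence $((\xi,\sigma),(\eta,\tau))\mapsto\E\bigl(\tau_{(\xi,\sigma),(\eta,\tau)}\bigr)$ is measurable, and taking the driving randomness independent of $(\Xi,\Sigma,\Eta,\Tau)$ gives, by Tonelli,
\begin{equation*}
  \E\bigl(\tau_{(\Xi,\Sigma),(\Eta,\Tau)}\bigr)=\int_{\G\times\G}\E\bigl(\tau_{(\xi,\sigma),(\eta,\tau)}\bigr)\;\mathcal{L}\bigl((\Xi,\Sigma),(\Eta,\Tau)\bigr)\bigl(\diff((\xi,\sigma),(\eta,\tau))\bigr).
\end{equation*}
It therefore suffices to dominate the integrand by an $\mathcal{L}\bigl((\Xi,\Sigma),(\Eta,\Tau)\bigr)$-integrable function of the starting graphs.

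For fixed graphs with $n:=\varrho\bigl((\xi,\sigma),(\eta,\tau)\bigr)$ I would revisit the proof of Theorem~\ref{thm: coupling time} with the choice $n^{*}=1$. That proof exhibits a stochastic domination of $\tau_{(\xi,\sigma),(\eta,\tau)}$ by the absorption time at $0$ of the birth-and-death process $\Y$ on $\Z_{+}$ started at $n$, which at state $k\ge1$ waits an $\mathrm{Exp}(k)$ time and then moves up with probability at most $c/(k+c)$ and down otherwise, where $c=\sup_{\abs{\tilde\xi\cup\tilde\eta}\ge1}\int_{\mcx}\bigabs{\lambda(x\mvert\tilde\xi)-\lambda(x\mvert\tilde\eta)}\,\alpha(\diff x)\le2\sup_{\tilde\xi}\int_{\mcx}\lambda(x\mvert\tilde\xi)\,\alpha(\diff x)<\infty$ by~\eqref{eq: stability cond}. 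Writing $t_k$ for the mean first-passage time of $\Y$ from $k$ to $k-1$, a one-step decomposition gives $t_k\le\frac{k+c}{k^{2}}+\frac{c}{k}\,t_{k+1}$; iterating and using $\prod_{i=0}^{j-1}(k+i)\ge k\,j!$ then yields $t_k\le(1+c)e^{c}/k$, so the absorption time from $n$ has mean $\sum_{k=1}^{n}t_k\le(1+c)e^{c}\sum_{k=1}^{n}k^{-1}\le(1+c)e^{c}\,n$. With $C:=(1+c)e^{c}$ and $\varrho\bigl((\xi,\sigma),(\eta,\tau)\bigr)\le\abs{\xi\cup\eta}\le\abs{\xi}+\abs{\eta}$, this gives $\E\bigl(\tau_{(\xi,\sigma),(\eta,\tau)}\bigr)\le C\bigl(\abs{\xi}+\abs{\eta}\bigr)$.

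Plugging this into the identity above yields $\E\bigl(\tau_{(\Xi,\Sigma),(\Eta,\Tau)}\bigr)\le C\bigl(\E\abs{\Xi}+\E\abs{\Eta}\bigr)$. Since the vertex processes of $\Xi$ and $\Eta$ have conditional intensities obeying~\eqref{eq: stability cond}, the GNZ equation~\eqref{eq:gnz} with $h\equiv1$ turns these assumptions into $\E\abs{\Xi}<\infty$ and $\E\abs{\Eta}<\infty$, so that $\E\bigl(\tau_{(\Xi,\Sigma),(\Eta,\Tau)}\bigr)<\infty$ irrespective of the joint law, as claimed.

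The hard part is the middle step. Theorem~\ref{thm: coupling time} is only stated for $\norm{\xi-\eta}=1$, so one has to go back into its proof to read off the stochastic domination for an arbitrary graph difference $n$ and then estimate the absorption time of $\Y$ when started at level $n$ rather than at $1$. The crucial observation is that taking $n^{*}=1$ makes the ``good death'' lower bound $(1+c/k)^{-1}$ available at every state $k\ge1$: the down-rate of $\Y$ then grows linearly in $k$ while the up-rate stays bounded, so the series $\sum_k t_k$ converges and is of order $n$ (indeed of order $\log n$). The measurability assertion, the Tonelli interchange and the GNZ computation of $\E\abs{\Xi}$ and $\E\abs{\Eta}$ are routine.
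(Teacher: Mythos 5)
Your proposal is correct and follows essentially the paper's route: the paper's proof simply defers to \textcite[Proposition~39]{stucki2014} together with the observation $\varrho((\Xi,\Sigma),(\Eta,\Tau))\leq\abs{\Xi\cup\Eta}$, and your argument — condition on the initial graphs, run the dominating birth-and-death chain from the proof of Theorem~\ref{thm: coupling time} with $n^*=1$ to get $\E(\tau_{(\xi,\sigma),(\eta,\tau)})\leq (1+c)e^{c}\,(\abs{\xi}+\abs{\eta})$, then integrate using \eqref{eq: stability cond} and the GNZ equation — is precisely a self-contained version of that cited argument. The hitting-time estimate $t_k\leq(1+c)e^{c}/k$ checks out (one should, strictly speaking, justify the recursion via the minimal nonnegative solution or an a priori finiteness argument, but this is the standard technicality the paper also glosses over).
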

\begin{proof}
The proof follows analogously to the proof of \textcite[Proposition~39]{stucki2014} as $\varrho((\Xi,\Sigma),(\Eta,\Tau))\leq \abs{\Xi\cup \Eta}$.
\end{proof}
We are now able to prove that the candidate~$h_f$ defined in \eqref{eq: Stein solution} is a solution of the Stein equation~\eqref{eq: Stein equation}. 
\begin{lemma} \label{le: hf solves Stein equation}
Let $(\Eta,\Tau)$ be a $\mathrm{RGG}(\lambda,\kappa)$ whose vertex process fulfils the stability condition~\eqref{eq: stability cond} and $f \colon \G \to \R$ a bounded measurable function. Then $h_f$ given in \eqref{eq: Stein solution} is well-defined and solves the Stein equation~\eqref{eq: Stein equation}.
\end{lemma}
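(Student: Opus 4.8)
The plan is to use the generator/semigroup form of Stein's method. Write $(T_t)_{t\geq 0}$ for the transition semigroup of the GBDP generated by $\mcg^{\lambda,\kappa}$, so that $T_t g(\xi,\sigma) = \E\bigl[g\bigl((\Eta_t,\Tau_t)^{(\xi,\sigma)}\bigr)\bigr]$ for bounded measurable $g$, and put $\pi f = \E[f(\Eta,\Tau)]$, so the candidate reads $h_f(\xi,\sigma) = -\int_0^\infty \bigl(T_s f(\xi,\sigma) - \pi f\bigr)\,\diff s$. First I would check that this integral converges absolutely. Using the coupling of Section~\ref{ch: coupling}, couple the GBDP started in the deterministic graph $(\xi,\sigma)$ with a GBDP started in an independent $(\Eta_0,\Tau_0)\sim P^\lambda\otimes Q^\kappa$. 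Since $P^\lambda\otimes Q^\kappa$ is stationary by Lemma~\ref{le: stationary distribution}, $\E\bigl[f\bigl((\Eta_s,\Tau_s)^{(\Eta_0,\Tau_0)}\bigr)\bigr] = \pi f$ for all $s\geq 0$; and since the two coupled processes coincide from the coupling time $\tau_{(\xi,\sigma),(\Eta_0,\Tau_0)}$ onwards, $\bigabs{T_s f(\xi,\sigma) - \pi f} \leq 2\norm{f}_\infty\,\Prob\bigl(\tau_{(\xi,\sigma),(\Eta_0,\Tau_0)} > s\bigr)$. Integrating in $s$ gives $\bigabs{h_f(\xi,\sigma)} \leq 2\norm{f}_\infty\,\E\bigl[\tau_{(\xi,\sigma),(\Eta_0,\Tau_0)}\bigr]$, which is finite by Proposition~\ref{prop: coupling time for random graphs}; together with the explicit coupling-time bounds of Theorem~\ref{thm: coupling time} this also shows that $h_f$ is bounded on every set $\{\abs{\xi}\leq N\}$, so that the right-hand side of \eqref{eq: Generator} is well defined at $h_f$ (using \eqref{eq: stability cond}).

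Next I would establish the semigroup identity $T_t h_f - h_f = \int_0^t \bigl(T_s f - \pi f\bigr)\,\diff s$. Applying $T_t$ to the definition of $h_f$ and interchanging $\E$ with $\int_0^\infty\diff s$ — justified by the same coupling bound with $(\Eta_t,\Tau_t)^{(\xi,\sigma)}$ in place of $(\xi,\sigma)$ and Proposition~\ref{prop: coupling time for random graphs} applied to two random graphs — the Markov property together with the tower rule give $\E\bigl[T_s f\bigl((\Eta_t,\Tau_t)^{(\xi,\sigma)}\bigr)\bigr] = T_{s+t} f(\xi,\sigma)$, whence $T_t h_f(\xi,\sigma) = -\int_t^\infty \bigl(T_u f(\xi,\sigma) - \pi f\bigr)\,\diff u$, and the identity follows by subtraction. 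Dividing by $t$ and letting $t\downarrow 0$: the right-hand side converges to $f(\xi,\sigma) - \pi f$ because $u\mapsto T_u f(\xi,\sigma)$ is right-continuous at $0$ (the process holds at $(\xi,\sigma)$ for an $\mathrm{Exp}(q(\xi))$-time and $f$ is bounded), while the left-hand side converges to $\mcg^{\lambda,\kappa} h_f(\xi,\sigma)$. The latter I would justify via the jump structure: for any $h$ that is bounded on $\{\abs{\xi}\leq N\}$ for every $N$, decomposing $T_t h(\xi,\sigma)$ according to whether a jump of the non-explosive PJMP has occurred by time $t$ yields $t^{-1}\bigl(T_t h(\xi,\sigma) - h(\xi,\sigma)\bigr) \to q(\xi)\bigl(\overline{P}^{\lambda,\kappa} h(\xi,\sigma) - h(\xi,\sigma)\bigr)$, which is exactly the right-hand side of \eqref{eq: Generator}; non-explosion holds under \eqref{eq: stability cond} (cf.\ the references in Section~\ref{ch: kappa geom RG}).

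I expect the main obstacle to be this last identification, together with the accompanying Fubini and dominated-convergence bookkeeping: making precise that $h_f$ lies in the (pointwise) domain of the generator of the rate-unbounded — but non-explosive — PJMP, and that this generator is genuinely given by the explicit formula \eqref{eq: Generator} rather than only by an abstract strong limit. Once this is in place, the remaining estimates are routine consequences of the integrability of the coupling times from Theorem~\ref{thm: coupling time} and Proposition~\ref{prop: coupling time for random graphs}.
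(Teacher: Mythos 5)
Your first half --- well-definedness via the coupling with a stationarily started copy and the integrability of the coupling time from Proposition~\ref{prop: coupling time for random graphs} --- is exactly the paper's argument. For the second half you part ways with the paper: where the paper reduces the claim to continuity at $0$ of $s\mapsto P_sf(\xi,\sigma)$ by invoking \textcite[Proposition~1.1]{S2014} and settles that continuity with \textcite[Theorem~13.9]{Kallenberg2021fomp}, you redo the classical chain by hand: the identity $T_th_f-h_f=\int_0^t\bigl(T_uf-\E f(\Eta,\Tau)\bigr)\,\diff u$ (your Fubini/Markov-property justification for it is fine), differentiation of its right-hand side at $0$ via the holding-time argument, and then a direct identification of $\lim_{t\downarrow0}t^{-1}\bigl(T_th_f-h_f\bigr)(\xi,\sigma)$ with the explicit formula~\eqref{eq: Generator}.

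The genuine gap is in that last identification, which you flag yourself but do not close. The auxiliary claim you lean on --- that for \emph{any} $h$ bounded on every $\{\abs{\xi}\le N\}$ the jump decomposition gives $t^{-1}\bigl(T_th-h\bigr)(\xi,\sigma)\to q(\xi)\bigl(\overline{P}^{\lambda,\kappa}h(\xi,\sigma)-h(\xi,\sigma)\bigr)$ --- is false at that level of generality: the remainder $\E\bigl[h\bigl((\Eta_t,\Tau_t)^{(\xi,\sigma)}\bigr);\ \text{at least two jumps by } t\bigr]$ need not be $o(t)$ (or even finite) if $h$ grows fast in the number of vertices, since births keep arriving at rate up to $\sup_{\xi}\int_{\mcx}\lambda(x\mvert\xi)\,\alpha(\diff x)$ and the state can leave every $\{\abs{\xi}\le N\}$. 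What saves your route for $h=h_f$ is a quantitative growth bound, $\abs{h_f(\xi,\sigma)}\le 2\norm{f}_\infty\,\E\tau_{(\xi,\sigma),(\Eta,\Tau)}$ with the right-hand side growing at most polynomially in $\abs{\xi}$; this has to be extracted from the birth-and-death domination inside the \emph{proof} of Theorem~\ref{thm: coupling time} --- the displayed bound $B^*$ you cite only covers $\norm{\xi-\eta}=1$, so your appeal to that theorem for boundedness on $\{\abs{\xi}\le N\}$ is also off. With such a growth bound, dominating the vertex count at time $t$ by $\abs{\xi}$ plus a Poisson number of births and noting that after the first jump the total rate is at most $c+\abs{\xi}+1$ makes the two-or-more-jump term $O(t^2)$ and completes the identification; as submitted, though, the decisive step is asserted rather than proved --- precisely the step the paper disposes of by citation.
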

\begin{proof}
Consider $(\xi,\sigma)\in\G$ and let $(\Xi,\Sigma)$ be a random graph. 
Denote by $((\Eta_s,\Tau_s)^{(\xi,\sigma)})_{s\geq0}$ and $((\Eta_s,\Tau_s)^{(\Xi,\Sigma)})_{s\geq0}$ the GBDPs generated by $\mcg^{\lambda,\kappa}$ and started in $(\xi,\sigma)$ and $(\Xi,\Sigma)$, respectively. Then Proposition~\ref{prop: coupling time for random graphs} yields the integrability of the coupling time and thus
\begin{alignat}{1} \label{eq: hf well-defined}
	&\notag\bigg\vert\int_{0}^{\infty} \E\bigl[f((\Eta_s,\Tau_s)^{(\xi,\sigma)})\bigr] -\E\bigl[f((\Eta_s,\Tau_s)^{(\Xi,\Sigma)})\bigr] \diff s\bigg\vert\\
	&\notag\leq \int_{0}^{\infty} \bigg\vert\E\bigl[f(\Eta_s^{(\xi)},\Tau_s^{(\xi,\sigma)})- f(\Eta_s^{(\Xi)},\Tau_s^{(\Xi,\Sigma)})\bigr] \bigg\vert\diff s\\
    &\leq 2\,\norm{f}_{\infty}  \int_{0}^{\infty}\Prob(\tau_{(\xi,\sigma),(\Xi,\Sigma)} > s) \diff s < \infty.
\end{alignat}
In particular, this yields the well-definedness of $h_f$ by taking $(\Xi,\Sigma) = (\Eta,\Tau)$ to be the $\mathrm{RGG}(\lambda,\kappa)$, 
implying $(\Eta_s,\Tau_s)^{(\Eta,\Tau)} \overset{\text{d}}{=} (\Eta,\Tau)$ for all $s\geq0$.

It remains to show that $h_f$ defined in \eqref{eq: Stein solution} solves the Stein equation~\eqref{eq: Stein equation}. By \textcite[Proposition~1.1]{S2014}, it suffices to show that the map $s\mapsto \E f((\Eta_s,\Tau_s)^{(\xi,\sigma)}) = P_s f(\xi,\sigma)$ is well-defined and continuous at $0$.
This follows directly from \textcite[Theorem 13.9]{Kallenberg2021fomp}. 
\end{proof}

\subsection{Bounds for general IPMs} \label{ssec: bounds for general IPM}

We are now able to state a general bound for absolute differences of expectations, which can then be further analysed to obtain bounds for a given IPM based on the properties of functions $f \in \mcf$. 

\begin{theorem} \label{thm: general approx result}
Let $(\Eta,\Tau)\sim P^{\lambda}\otimes Q^{\kappa}$ be the generalised random geometric graph with vertex process fulfilling the stability condition~\eqref{eq: stability cond} and $(\Xi,\Sigma)$ a random graph having law $P\otimes Q$, where $Q$ is an edge kernel and $\Xi\sim P$ fulfils Condition~\eqref{eq:condsigma} and has Papangelou kernel $L$. Then, for all $f \colon \G \to \R$ bounded and measurable,
\begin{align} \label{eq: general approx result}
    \bigl| \E\bigl(f(\Xi,\Sigma)\bigr)-\E\bigl(f(\Eta,\Tau)\bigr)\bigr| &\leq \E \biggl(\int_{\mcx}\triangle_V h_f(\Xi,\Sigma)\,\bigl|\lambda(x\mvert \Xi) \, \alpha(\diff x)-L(\diff x\vert\Xi)\bigr| \biggr) \notag\\
    &\hspace*{4mm} {} + \E\biggl(\int_{\mcx} \triangle_Eh_f(\Xi,\Sigma)\, \E\bigl[\norm{{\hat{v}(\Tilde{\Sigma}_{\Xi,\Sigma,x})}}_1 \bigm| \Xi,\Sigma\bigr] \,\lambda(x\mvert \Xi) \,\alpha(\diff x) \biggr) \notag\\
    &\hspace*{4mm} {} + \E \biggl( \int_{\mcx}\int_{\mfn_2} \triangle_Vh_f(\Xi,\sigma)\,\bigabs{\tilde{Q}_1(\Xi,x;\diff \sigma) - Q(\Xi;\diff \sigma)}\,L(\diff x \,\vert \Xi) \biggr),
\end{align}
\vspace*{-6mm}

where
\begin{align*}
  \Tilde{\Sigma}_{\xi,\sigma,x}\sim \tilde{Q}_{2 \mvert 1}(\xi,x,\sigma ;\, \cdot)
  &= \mathcal{L} \bigl( \Sigma \vert_{\langle \xi, x \rangle} \bigm| \Xi = \xi+\delta_x, \Sigma \vert_{\xi^{\atwo}} = \sigma \bigr) \\[0.5mm]
  \tilde{Q}_1(\xi, x;\, \cdot) &= \mathcal{L} \bigl( \Sigma \vert_{\xi^{\langle 2\rangle}} \bigm| \Xi = \xi+\delta_x\bigr) \\[-0.5mm]
  \norm{\hat{v}(\tilde{\Sigma}_{\xi,\sigma,x})}_1 &= \sum_{i=1}^{\abs{\xi}} \Bigl| \kappa(x_i,x) - \Prob\bigl(\Tilde{\Sigma}_{\xi,\sigma,x}(\{x,x_i\}) = 1 \bigm| (\Tilde{\Sigma}_{\xi,\sigma,x}(\{x,x_j\}))_{j\neq i}\bigr) \Bigr|.
\end{align*}
Furthermore, for all $(\xi,\sigma)\in\mathbb{G}$ and $B^*$ as in Theorem~\ref{thm: coupling time},
the Stein factors are are given as
\begin{align*}
    \triangle_V\,h_f(\xi,\sigma) &= \sup\limits_{x\in\mcx\setminus\xi} \; \Bigl| \hbit \E \hbit h_f(\xi + \delta_x,\sigma + \Tilde{\Sigma}_{\xi,\sigma,x}) -h_f(\xi,\sigma)  \Bigr| \leq 2B^{*}\norm{f}_{\infty}, \\
    \triangle_E\,h_f(\xi,\sigma) &= \sup\limits_{\substack{x\in\mcx\setminus\xi,\,y\in\xi \\
    \sigma_2\in \mfn_2\vert_{\langle\xi\setminus\{y\},x\rangle}
    }} \Bigl| h_f(\xi + \delta_x,\sigma + \sigma_2 +\delta_{\{x,y\}}) - h_f\bigl(\xi + \delta_x,\sigma + \sigma_2 \bigr) \Bigr| \leq \norm{f}_{\infty}.
\end{align*}
\end{theorem}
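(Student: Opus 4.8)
The plan is to start from the Stein equation identity established just before the statement, namely
\[
  \bigl| \E(f(\Xi,\Sigma)) - \E(f(\Eta,\Tau)) \bigr| = \bigl| \E\bigl(\mcg^{\lambda,\kappa} h_f(\Xi,\Sigma)\bigr) \bigr|,
\]
with $h_f$ the Stein solution from \eqref{eq: Stein solution}, which is well-defined by Lemma~\ref{le: hf solves Stein equation}. Writing out the generator \eqref{eq: Generator}, the right-hand side splits into a ``birth'' part (the $\lambda(x\mvert\xi)\,\alpha(\diff x)$ integral) and a ``death'' part (the $\xi(\diff x)$ integral). The key move is to rewrite the death part using the graph GNZ formula. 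Applying Theorem~\ref{thm:graph_gnz} to the function $\tilde h(\xi,\sigma,x,\sigma_2) = h_f(\xi+\delta_x,\sigma+\sigma_2)$ — noting that after a death the remaining edge configuration is exactly $\Sigma\vert_{(\Xi-\delta_x)^{\atwo}}$ and the deleted incident edges are $\Sigma\vert_{\langle\Xi-\delta_x,x\rangle}$ — converts $\E\bigl(\int h_f(\Xi,\Sigma)\,\Xi(\diff x)\bigr)$ into a term integrated against $L(\diff x\mvert\Xi)$ with the edge kernel $\tilde Q_{2\mvert1}$ replacing $Q_2^{\kappa}$, plus the GNZ error term involving $\tilde Q_1(\Xi,x;\cdot) - Q(\Xi;\cdot)$.

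Next I would regroup the resulting expression into three differences. First, the birth term against $\lambda(x\mvert\Xi)\,\alpha(\diff x)$ minus the (GNZ-transformed) death term against $L(\diff x\mvert\Xi)$ — but both now evaluated with the \emph{same} edge mechanism — contributes the vertex error $\int \triangle_V h_f(\Xi,\Sigma)\,|\lambda(x\mvert\Xi)\,\alpha(\diff x) - L(\diff x\mvert\Xi)|$ after bounding the integrand by its supremum over added locations. Second, the discrepancy between the target edge mechanism $Q_2^{\kappa}(\Xi,x;\cdot)$ and the actual conditional edge law $\tilde Q_{2\mvert1}(\Xi,x,\Sigma;\cdot)$ for the newly born vertex is controlled by a telescoping argument over the $|\Xi|$ potential new edges: exchanging one Bernoulli$(\kappa(x_i,x))$ edge for its conditional counterpart changes $h_f$ by at most $\triangle_E h_f$, and summing the $|\Xi|$ pointwise discrepancies gives exactly the factor $\E[\|\hat v(\tilde\Sigma_{\Xi,\Sigma,x})\|_1\mid\Xi,\Sigma]$. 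Third, the GNZ error term is already in the desired form, and bounding its integrand by $\triangle_V h_f$ finishes the three-term bound \eqref{eq: general approx result}.

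It remains to bound the Stein factors. For $\triangle_V h_f$ I would use the coupling: $\E h_f(\xi+\delta_x,\sigma+\tilde\Sigma_{\xi,\sigma,x}) - h_f(\xi,\sigma)$ is an integral over $s$ of $\E[f((\Eta_s,\Tau_s)^{(\xi+\delta_x,\sigma+\tilde\Sigma)}) - f((\Eta_s,\Tau_s)^{(\xi,\sigma)})]$; coupling the two GBDPs and noting that the graph difference $\varrho$ between the two starting states is $1$ (they differ only by one vertex and its incident edges), the integrand is bounded by $2\|f\|_\infty\Prob(\tau > s)$, whose $s$-integral is $\E\tau \le B^*$ by Theorem~\ref{thm: coupling time}. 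For $\triangle_E h_f$ the two starting graphs differ only in a single edge $\{x,y\}$ — a pure edge difference with $\varrho = 2$ and every vertex having at most one differing incident edge — so by \eqref{eq: coupling time for pure edge diff} and part c) of Remark~\ref{re: special cases coupling time} (or directly, since the edge dies at an $\mathrm{Exp}(2)$ time and the processes agree thereafter) one gets $\E\tau \le \tfrac12$, hence $\triangle_E h_f \le 2\|f\|_\infty\cdot\tfrac12 = \|f\|_\infty$. The main obstacle I anticipate is the careful bookkeeping in the second step — matching the GNZ-transformed death term against the birth term so that the edge mechanisms line up, and verifying that the telescoping over potential new edges produces precisely the conditional-probability form of $\|\hat v\|_1$ rather than an unconditional product-form bound; getting the conditioning right (conditioning on $(\tilde\Sigma_{\xi,\sigma,x}(\{x,x_j\}))_{j\ne i}$) is what yields the sharper edge error term.
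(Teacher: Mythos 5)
Your proposal is correct and reproduces the paper's overall architecture: the Stein/generator identity, the application of the graph GNZ formula (Theorem~\ref{thm:graph_gnz}) to the death part of $\mcg^{\lambda,\kappa}h_f$, the regrouping into a vertex term against $\abs{\lambda(x\mvert\Xi)\,\alpha(\diff x)-L(\diff x\mvert\Xi)}$, an edge-mechanism term against $\lambda\,\alpha$, and the GNZ error term, and the coupling bounds $\triangle_V h_f\le 2B^*\norm{f}_\infty$ (graph difference $1$, Theorem~\ref{thm: coupling time}) and $\triangle_E h_f\le\norm{f}_\infty$ (pure edge difference, $\varrho=2$, Equation~\eqref{eq: coupling time for pure edge diff}) are exactly as in the paper. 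The genuine difference is your treatment of the edge-mechanism discrepancy: the paper isolates this as Lemma~\ref{le: edge bounds with glauber }, proved by Stein's method for the product Bernoulli law via a Glauber dynamics chain in the spirit of Reinert--Ross, whereas you propose a direct Lindeberg-type telescoping over the $\abs{\xi}$ potential edges incident to $x$. That route does work, but not quite as literally stated: replacing one coordinate at a time, say in the order $i=1,\dots,n$ with $Y_i=\tilde{\Sigma}_{\xi,\sigma,x}(\{x,x_i\})$ and independent $X_i\sim\mathrm{Ber}(\kappa(x_i,x))$, the $i$-th telescoping increment is controlled by $\E\bigabs{\Prob(Y_i=1\mid Y_1,\dots,Y_{i-1})-\kappa(x_i,x)}$, i.e.\ conditioning only on the already-replaced coordinates, not on $(Y_j)_{j\neq i}$ as in $\norm{\hat{v}}_1$. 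This is precisely the point you flag at the end; it is closed in one line by the tower property and conditional Jensen, since $\sigma(Y_1,\dots,Y_{i-1})\subset\sigma((Y_j)_{j\neq i})$ gives $\E\bigabs{\Prob(Y_i=1\mid Y_1,\dots,Y_{i-1})-\kappa(x_i,x)}\le\E\bigabs{\Prob(Y_i=1\mid (Y_j)_{j\neq i})-\kappa(x_i,x)}$. With that step added, and with $\sup_i\norm{\Delta_i h}_\infty\le\triangle_E h_f(\xi,\sigma)$ as you note, your telescoping yields the same bound as the paper's lemma (indeed a marginally sharper, sequentially conditioned version), is more elementary, and sidesteps the irreducibility caveat for $\kappa(x,x_i)\in\{0,1\}$ that the paper must handle in its Glauber-dynamics argument; what the paper's approach buys in exchange is a ready-made framework (Reinert--Ross) whose Stein factors and coupling are reusable elsewhere.
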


For the proof of this theorem we need the following auxiliary lemma that gives an approximation of the edge term. 

\begin{lemma} \ \label{le: edge bounds with glauber } 
In the setting of Theorem~\ref{thm: general approx result}, we have
\begin{align*}
    \Bigl| \E \bigl[h_f\bigl(\xi + \delta_x, \sigma + \Tilde{\Sigma}_{\xi,\sigma,x} \bigr) \bigr] - \E \bigl[h_f\bigl(\xi + \delta_x, \sigma +\Tau_{\xi,x}\bigr) \bigr] \Bigr|
    \, \leq \, \triangle_E h_f(\xi,\sigma) \, \E\bigl[\norm{{\hat{v}(\Tilde{\Sigma}_{\xi,\sigma,x})}}_1 \bigr]. 
\end{align*}
\end{lemma}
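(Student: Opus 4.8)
The plan is to reduce the claim to a coordinate‑by‑coordinate telescoping over the potential edges $\langle\xi,x\rangle=\{\{x,x_1\},\dots,\{x,x_n\}\}$. To this end I would write $\xi=\{x_1,\dots,x_n\}$, $n=\abs{\xi}$, $p_i=\kappa(x_i,x)$, identify $\mfn_2\vert_{\langle\xi,x\rangle}$ with $\{0,1\}^n$ via $\sigma'\mapsto(\sigma'(\{x,x_i\}))_{i=1}^{n}$, and take $W=(W_1,\dots,W_n)$ with law $\tilde{Q}_{2 \mvert 1}(\xi,x,\sigma;\,\cdot)$ (so $W\eqinlaw\Tilde{\Sigma}_{\xi,\sigma,x}$) and $Z=(Z_1,\dots,Z_n)$ with law $Q_2^{\kappa}(\xi,x;\,\cdot)$ (so $Z\eqinlaw\Tau_{\xi,x}$, i.e.\ the $Z_i$ are independent with $Z_i\sim\mathrm{Ber}(p_i)$), independent of $W$. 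Setting $V^{(k)}=(Z_1,\dots,Z_k,W_{k+1},\dots,W_n)$, $\mu_k=\mcl(V^{(k)})$ for $k=0,\dots,n$, and $\phi(\sigma')=h_f(\xi+\delta_x,\sigma+\sigma')$ for $\sigma'\in\mfn_2\vert_{\langle\xi,x\rangle}$ (bounded, since $h_f\in\mcf'$), one has $\mu_0=\mcl(\Tilde{\Sigma}_{\xi,\sigma,x})$, $\mu_n=\mcl(\Tau_{\xi,x})$, so the left‑hand side of the lemma equals $\bigl|\int\phi\,d\mu_0-\int\phi\,d\mu_n\bigr|\le\sum_{k=1}^{n}\bigl|\int\phi\,d\mu_{k-1}-\int\phi\,d\mu_k\bigr|$, and I would then estimate each summand separately; the key is that no joint coupling of $W$ and $Z$ is fixed, only the interpolating laws $\mu_k$.

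For fixed $k$, the laws $\mu_{k-1}$ and $\mu_k$ have the \emph{same} marginal on the coordinates $\neq k$, namely $\bigl(\bigotimes_{j<k}\mathrm{Ber}(p_j)\bigr)\otimes\mcl(W_{k+1},\dots,W_n)$ (using $Z\perp W$), while the conditional law of coordinate $k$ given the others is $\mathrm{Ber}(p_k)$ under $\mu_k$ and $\mathrm{Ber}\bigl(\Prob(W_k=1\mid W_{k+1},\dots,W_n)\bigr)$ under $\mu_{k-1}$. Coupling $\mu_{k-1}$ and $\mu_k$ so that all coordinates $\neq k$ agree and coordinate $k$ is coupled maximally, the two coupled realisations differ in at most coordinate $k$, i.e.\ the corresponding edge configurations differ at most in whether the single edge $\{x,x_k\}$ is present, and they do so with probability $\bigl|p_k-\Prob(W_k=1\mid W_{k+1},\dots,W_n)\bigr|$ conditionally on the common coordinates. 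Since toggling the edge $\{x,x_k\}$ in the argument of $h_f(\xi+\delta_x,\,\cdot\,)$ changes its value by at most $\triangle_E h_f(\xi,\sigma)$ --- apply the definition of $\triangle_E$ with $y=x_k$ and $\sigma_2$ the common part of the two configurations (which lies in $\mfn_2\vert_{\langle\xi\setminus\{x_k\},x\rangle}$) --- taking expectations gives
\[
  \Bigl|\int\phi\,d\mu_{k-1}-\int\phi\,d\mu_k\Bigr|\;\le\;\triangle_E h_f(\xi,\sigma)\;\E\bigl|p_k-\Prob(W_k=1\mid W_{k+1},\dots,W_n)\bigr|.
\]

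It remains to pass from conditioning on $(W_{k+1},\dots,W_n)$ to conditioning on all of $(W_j)_{j\neq k}$. Since $\sigma(W_{k+1},\dots,W_n)\subseteq\sigma\bigl((W_j)_{j\neq k}\bigr)$, the tower property gives $\Prob(W_k=1\mid W_{k+1},\dots,W_n)=\E\bigl[\Prob(W_k=1\mid (W_j)_{j\neq k})\bigm|W_{k+1},\dots,W_n\bigr]$, so conditional Jensen yields $\bigl|p_k-\Prob(W_k=1\mid W_{k+1},\dots,W_n)\bigr|\le\E\bigl[\bigl|p_k-\Prob(W_k=1\mid (W_j)_{j\neq k})\bigr|\bigm|W_{k+1},\dots,W_n\bigr]$ and hence $\E\bigl|p_k-\Prob(W_k=1\mid W_{k+1},\dots,W_n)\bigr|\le\E\bigl|\kappa(x_k,x)-\Prob(W_k=1\mid (W_j)_{j\neq k})\bigr|$. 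Summing over $k$ and recognising $\sum_{k=1}^{n}\E\bigl|\kappa(x_k,x)-\Prob(W_k=1\mid (W_j)_{j\neq k})\bigr|=\E\norm{\hat{v}(\Tilde{\Sigma}_{\xi,\sigma,x})}_1$ proves the lemma. I expect the main obstacle to be precisely this last step together with the realisation that a single coupling of $W$ and $Z$ will not do: coupling each coordinate maximally and ``locally'' is incompatible with $Z$ being a product measure, so one is forced to telescope through the fixed laws $\mu_k$ and re‑optimise the coupling in every step, after which the conditional Jensen estimate is exactly what turns the naive ``later‑coordinates'' bound into the claimed one.
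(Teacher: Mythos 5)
Your proof is correct, but it takes a genuinely different route from the paper. The paper follows \textcite{ReinertRoss2019}: it identifies $\mfn_2\vert_{\langle\xi,x\rangle}$ with $\{0,1\}^n$, sets up a Glauber dynamics Markov chain whose stationary distribution is the law of $\Tau_{\xi,x}$, constructs a solution $g_h$ of the associated discrete Stein equation by hand via a coupling of two chains (this is needed because the chain may fail to be irreducible when $\kappa\in\{0,1\}$), computes the expected coupling time $\E\tau_1^*=n$, and then applies Lemma~2.4 of Reinert--Ross together with $\abs{\Delta_s g_h}\leq \norm{\Delta_s h}_\infty\,\E\tau_1^*$ and $\sup_s\norm{\Delta_s h}_{\infty}\leq\triangle_E h_f(\xi,\sigma)$ to arrive at exactly the same bound $\triangle_E h_f(\xi,\sigma)\,\E\norm{\hat v(\Tilde\Sigma_{\xi,\sigma,x})}_1$. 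You instead run a Lindeberg-type coordinate-replacement argument: telescoping through the laws $\mu_k$ of $(Z_1,\dots,Z_k,W_{k+1},\dots,W_n)$, exploiting that $\mu_{k-1}$ and $\mu_k$ share the marginal on coordinates $\neq k$ (here the product structure of $Z$ and the independence $Z\perp W$ are essential), coupling each step maximally in the $k$-th coordinate so that the two configurations differ in at most the edge $\{x,x_k\}$, which is controlled by $\triangle_E h_f(\xi,\sigma)$, and finally passing from conditioning on $(W_{k+1},\dots,W_n)$ to conditioning on $(W_j)_{j\neq k}$ by the tower property and conditional Jensen --- a step whose inequality indeed goes in the required direction, so your intermediate bound (with conditioning only on later coordinates) is in fact slightly sharper before you relax it to match $\norm{\hat v}_1$. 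Your approach buys elementarity and robustness: no Markov chain, no Stein equation, and no irreducibility issue to circumvent; the paper's approach buys a direct connection to the existing Stein's-method literature and reuses the Reinert--Ross machinery, which would also accommodate variants where a per-coordinate generator bound is the natural object. Both arguments yield the identical constant in the lemma.
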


\begin{proof} Fixing $(\xi,\sigma) \in \G$ with $\xi = \sum_{i=1}^n \delta_{x_i}$ and $x \in \mcx \setminus \xi$, we define the bijection $\Phi: \mfn_2\vert_{\langle\xi,x\rangle}\to \{0,1\}^{n}$, $\sigma' \mapsto (\sigma'(\{x_1,x\}),\ldots,\sigma'(\{x_n,x\}))^\top$ and set $Y_* = \Phi(\Tilde{\Sigma}_{\xi,\sigma,x})$ and $X_* = \Phi(\Tau_{\xi,x})$. 
Letting $$h(\cdot) = h_f\bigl(\xi + \delta_x, \sigma + \Phi^{-1}(\cdot) \bigr),$$
it is enough to study $\bigabs{\E h(Y_*) - \E h(X_*)}$ for bounded functions $h \colon \{0,1\}^n \to \R$.

We apply Stein's method using a Glauber dynamics Markov chain, which, at rate 1 in continuous time, picks an index $s \in [n]$ uniformly at random and sets the corresponding component to 1 with probability $\kappa(\{x,x_s\})$. This approach is similar as in the proof of Theorem~2.1 in \textcite{ReinertRoss2019}. However, note that a direct application of Theorem~2.1 is not possible as we might have $\kappa(\{x,x_s\})\in\{0,1\}$ and thus a non-irreducible Markov chain (on $\{0,1\}^n$). 

For $z\in\{0,1\}^n$, let $z^{(s,1)}$ and $z^{(s,0)}$ be the vectors having a $1$ and a $0$ in the $s$-th component, respectively, and being equal to $z$ otherwise. 
Then the generator $\mathcal{A}$ of the Glauber dynamics Markov chain is given by 
$$\mathcal{A}f(z) = \frac{1}{n}\sum_{s\in [n]} \big[ \kappa(\{x,x_s\}) \Delta_s f(z) + \big(f(z^{(s,0)})-f(z)\big)\big],$$
where $\Delta_s f(z) := f(z^{(s,1)}) - f(z^{(s,0)})$. 

Since the rate, the randomly picked index and the update probabilities of its component do not depend on the current state $z$, we obtain a straightforward coupling $(\hat{X}(t),\hat{Y}(t))_{t\geq 0}$ of two chains with generator $\mathcal{A}$ started in different states: simply construct both chains based on the same random sequences of holding times, indices, and updated components. It then remains to compute the expected coupling time. 

Assume that initially $m$ components are different, i.e.\ $\norm{\hat{X}(0)-\hat{Y}(0)}_1=m$. Let $\tau_m = \inf\{t\geq 0\mvert \norm{\hat{X}(t)-\hat{Y}(t)}_1 = m-1\}$ be the first time we have $m-1$ different components and $\tau^*_m = \inf\{t\geq 0\mvert \norm{\hat{X}(t)-\hat{Y}(t)}_1 = 0\}$ the coupling time under this starting condition. Given a jump occurs at time~$t$, the coupling difference $\norm{\hat{X}(t)-\hat{Y}(t)}_1$ decreases by $1$ with probability $\norm{\hat{X}(t-)-\hat{Y}(t-)}_1 \big/ n$ and remains unchanged otherwise (as we update the same component in the same way). Since the number $N(t)$ of jumps in $(0,t]$ is $\text{Poi}(t)$-distributed, we have
$$\Prob(\tau_m >t) = \sum_{i=0}^{\infty} \Prob(\tau_m >t\mvert N(t) = i) \hbit \Prob(N(t) = i) = \sum_{i=0}^{\infty} \bigg(\frac{n-m}{n}\bigg)^i \frac{1}{i!} e^{-t} t^i = e^{-tm/n}.$$
The strong Markov property yields then
$\E(\tau^*_m) = \E(\sum_{i=1}^m \tau_i) = \sum_{i=1}^m n/i <\infty$.

Now, using the coupling (and the finite expected coupling time) in a similar way as for Lemma~\ref{le: hf solves Stein equation}, it is straightforward to show that, for any bounded $h:\{0,1\}^n\to\R$, the function 
$$g_h(z):= -\int_0^{\infty} \E\bigl[h(\hat{X}(t)) \bigm| \hat{X}(0) = z \bigr] -\E[h(X_*)] \, \diff t$$
is well-defined and satisfies $\mathcal{A}g_h(z) = h(z) - \E[h(X_*)]$.

This allows us to apply \textcite[Lemma~2.4]{ReinertRoss2019}, which together with the above observations yields 
\begin{align*}
    \bigl| \E [h(Y)]-\E[h(X)] \bigr|
    &\leq \frac{1}{n} \sum_{s=1}^n \E\big[v_s(Y)\,\abs{\Delta_s g_h(Y)}\big] \\
    &\leq\frac{1}{n} \norm{\Delta_s h}_{\infty} \sum_{s=1}^n \E[v_s(Y)]\,\E(\tau^*_1) = \norm{\Delta_s h}_{\infty} \sum_{s=1}^n \E[v_s(Y)],
\end{align*}
where 
$v_s(y) = \big\vert \kappa(\{x,x_s\}) - \Prob(Y_s = 1\mvert (Y_j)_{j\neq s} = (y_j)_{j\neq s})\big\vert$. For the second inequality, we use an instance $(\hat{X}(t), \hat{Y}(t))_{t \geq 0}$ of the above coupling started at $(\hat{X}(0), \hat{Y}(0)) = (Y^{(s,1)}, Y^{(s,0)})$. Hence,
\begin{align*}
    \abs{\Delta_s g_h(Y)} &= \biggabs{ \int_0^{\infty} \E\bigl[h(\hat{X}(t))\bigr] - \E\bigl[h(\hat{Y}(t)) \bigr] \diff t } \\
    &\leq  \int_0^{\infty} \E\bigl[\abs{ h(\hat{X}(t)) - h(\hat{Y}(t))} \hbit \1{\{\tau_1^* >t\}} \bigr] \diff t 
    \leq\norm{\Delta_s h}_{\infty}\,\E(\tau^*_1).
\end{align*}

Setting $\hat{v}(\cdot) = (\hat{v}_1(\cdot),\ldots,\hat{v}_n(\cdot))$ with $\hat{v}_s = v_s \circ \Phi$ yields in total
\begin{align*}
    \Bigl| \E \bigl[h_f\bigl(\xi + \delta_x, \sigma + \Tilde{\Sigma}_{\xi,\sigma,x} \bigr) \bigr] - \E \bigl[h_f\bigl(\xi + \delta_x, \sigma +\Tau_{\xi,x}\bigr) \bigr] \Bigr| 
    &= \bigl| \E[h(Y)]-\E[h(X)] \bigr|\\
    &\leq \norm{\Delta_s h}_{\infty} \sum_{s=1}^n \E[v_s(Y)] \\
    &\leq \triangle_E h_f (\xi,\sigma) \hbit \E\big[\norm{\hat{v}(\tilde{\Sigma}_{\xi,\sigma,x})}_1\big]
\end{align*}
since
\begin{align*}
    \sup_{s} \, \norm{\Delta_s h}_{\infty} = \sup_{\substack{y\in \xi \\ \sigma_2\in \mfn_2\vert_{\langle \xi\setminus\{y\},x\rangle}}} \big\vert h_f(\xi+\delta_x, \sigma + \sigma_2 + \delta_{\{y,x\}}) -  h_f(\xi+\delta_x, \sigma + \sigma_2 )\big\vert\leq \triangle_E h_f (\xi,\sigma).
\end{align*}
\vspace*{-10mm}

\end{proof}

\begin{proof}[Proof of Theorem~\ref{thm: general approx result}]
Applying Lemma~\ref{le: hf solves Stein equation}, Theorem~\ref{thm:graph_gnz} and the triangle inequality, we obtain 
\begin{align*}
    \big\vert& \E \bigl(f(\Xi,\Sigma)\bigr)-\E\bigl(f(\Eta,\Tau)\bigr)\big\vert = \big\vert \E\bigl(\mcg^{\lambda,\kappa}h_f(\Xi,\Sigma)\bigr)\big\vert\\
    &=\biggl| \int_{\mfn\times\mfn_2} \int_{\mcx} \E \bigl[ h_f(\xi + \delta_x, \sigma +\Tau_{\xi,x}) - h_f(\xi,\sigma) \bigr] \, \lambda(x\mvert \xi) \, \alpha(\diff x) \, P \otimes Q(\diff\xi \diff \sigma) \\
    &\hspace*{4mm} {} + \int_{\mfn\times\mfn_2} \int_{\mcx} \E\bigl[ h_f(\xi,\sigma)-h_f(\xi+\delta_x,\sigma  +\Tilde{\Sigma}_{\xi,\sigma,x}) \bigr] \, L(\diff x \,\vert \xi) \, P \otimes Q(\diff\xi \diff \sigma) \biggr| \\
    &\hspace*{4mm} {} + \biggl| \int_{\mfn} \int_{\mcx} \int_{\mfn_2} \!\E\bigl[h_f\bigl(\xi, \sigma \bigr)-h_f\bigl(\xi+\delta_x,\sigma+\Tilde{\Sigma}_{\xi,\sigma,x}\bigr)\bigr] \, \bigl( \tilde{Q}_1(\xi, x;\, d\sigma) - Q(\xi, d\sigma) \bigr) \, L(dx \mvert \xi) \, P(d\xi) \biggr|.
\end{align*}
The second absolute value is directly seen to be bounded by the last term in Equation~\ref{eq: general approx result}. The first absolute value term can be bounded further by
\begin{align*}
    \biggl| &\int_{\mfn\times\mfn_2} \int_{\mcx} \E\bigl[h_f\bigl(\xi + \delta_x, \sigma +\Tilde{\Sigma}_{\xi,\sigma,x} \bigr) - h_f(\xi,\sigma) \bigr] \bigl( \lambda(x\mvert \xi) \alpha(\diff x) -L(\diff x\,\vert\,\xi) \bigr) \, P \otimes Q(\diff\xi \diff \sigma) \biggr| \\
    &\hspace*{4mm} {} +\biggl| \int_{\mfn\times\mfn_2} \int_{\mcx} \E \bigl[h_f(\xi + \delta_x, \sigma +\Tilde{\Sigma}_{\xi,\sigma,x}) - h_f(\xi + \delta_x, \sigma +\Tau_{\xi,x}) \bigr]\,\lambda(x\mvert \xi) \, \alpha(\diff x) \, P \otimes Q(\diff\xi \diff \sigma) \biggr| \\
    &\leq \E\biggl(\int_{\mcx}\triangle_V h_f(\Xi,\Sigma)\, \big\vert\lambda(x\mvert \Xi) \alpha(\diff x)-L(\diff x\vert\Xi)\big\vert \biggr) \\
    &\hspace*{4mm} {} + \E\biggl(\int_{\mcx}  \triangle_E h_f(\Xi,\Sigma)\, \E\bigl[\norm{{\hat{v}(\Tilde{\Sigma}_{\Xi,\Sigma,x})}}_1 \,\vert\,\Xi,\Sigma\bigr] \,\lambda(x\mvert \Xi) \, \alpha(\diff x)
    \biggr)
\end{align*}
using Lemma~\ref{le: edge bounds with glauber } for the second term.\\
Finally, the Stein factors can be estimated since setting $(\Xi,\Sigma) = (\eta,\tau)$ in Inequality~\eqref{eq: hf well-defined} yields
\begin{align*}
    &\big\vert h_f\bigl(\eta,\tau \bigr) -h_f\bigl(\xi,\sigma \bigr) \big\vert= \bigg\vert \int_0^{\infty} \E\bigl[f\big((\Eta_s,\Tau_s)^{(\xi,\sigma)}\big)] -\E\bigl[f\big((\Eta_s,\Tau_s)^{(\eta,\tau)}\big)]\diff s \bigg\vert
    \leq 2\,\norm{f}_{\infty}  \E \tau_{(\xi,\sigma),(\eta,\tau)}
\end{align*}
for all $(\xi,\sigma),(\eta,\tau)\in\mcg$. 
Thus, we obtain $\triangle_{V} h_f \leq 2B^{*}\norm{f}_{\infty}$ by Theorem~\ref{thm: coupling time} and $\triangle_{E} h_f \leq \norm{f}_{\infty}$ by \eqref{eq: coupling time for pure edge diff} with $\varrho((\xi,\sigma),(\xi,\tau))=2$.
\end{proof}

\subsection{Wasserstein bounds} \label{ssec: bounding stein factors}

As an immediate corollary of Theorem~\ref{thm: general approx result}, we obtain that the total variation distance between $P \otimes Q$ and $P^{\lambda} \otimes Q^{\kappa}$ is bounded by the right hand side of~\eqref{eq: general approx result} with Stein factors $\triangle_V\,h_f \leq B^*$ and  $\triangle_E\,h_f \leq 1/2$ (uniformly in the graph). This is seen by setting $\mcf = \{f \colon \G \to \R \text{ measurable} \mvert \norm{f}_{\infty} = 1/2 \}$. 

However, the total variation metric is very strong and it is known from other situations (e.g.\ \cite[Section 3]{BarbourBrown1992}) that substantially better bounds for the Stein factors can be obtained if we use a Wasserstein metric which takes the metric structure of the space $\mcx$ into account. In the present subsection, we focus on the Wasserstein metrics $W_{\G,i}$ introduced in Section~\ref{ch: metric}, with underlying GOSPA metrics $d_{\mathbb{G},i} \leq C_i$ for $i=1,2$. 
To lighten the notation, write $\mcf_i = \mcf_{\G,i}$, $i=1,2$. By the bounds on the underlying metrics, we can restrict the function classes $\mcf_i$ to $[-C_i/2,C_i/2]$-valued functions without changing the Wasserstein metric $W_{\G,i}$. Thus we obtain an immediate bound from Theorem~\ref{thm: general approx result} again with Stein factors $\triangle_V\,h_f \leq C_i \hbit B^*$ and $\triangle_E\,h_f \leq \frac{C_i}{2}$.


The goal of the present subsection is to improve on this bound. For Poisson process approximation without edges, it is known that essentially a factor of order $\Eta(\mcx)^{-1}$ (often times a log-term) can be gained by using a Wasserstein metric; see \textcite{BarbourBrown1992} and \textcite{sx2008}. In the random graph setting, something similar is possible, as we show next.

\begin{theorem} \label{thm: Poisson approx in Wasserstein}
Let $(\Eta,\Tau)\sim P^{\lambda}\otimes Q^{\kappa}$ be a generalised random geometric graph with $\lambda(\cdot \mvert \xi) = \lambda(\cdot)$ integrable and therefore $\Eta \sim \pop(\lambda)$. Let $(\Xi,\Sigma)$ be a random graph having law $P\otimes Q$, where $Q$ is an edge kernel and $\Xi\sim P$ fulfils Condition \eqref{eq:condsigma} and has Papangelou kernel $L$. Then, in the notation of Theorem~\ref{thm: general approx result}, 
\begin{align*}
    W_{\mathbb{G},i}(P\otimes Q, P^{\lambda}\otimes Q^{\kappa})
    &\leq c_V(\lambda)\,\E\biggl(\int_{\mcx}\big\vert\lambda(x) \alpha(\diff x)-L(\diff x\vert\Xi)\big\vert \biggr) \\
    &\hspace*{4mm} {} + c_E(\lambda)\, \int_{\mcx}  \E\bigl[\norm{{\hat{v}(\Tilde{\Sigma}_{\Xi,\Sigma,x})}}_1 \bigr] \,\lambda(x) \, \alpha(\diff x) \\
    &\hspace*{4mm} {} + c_V(\lambda)\, \E \biggl( \int_{\mcx}\int_{\mfn_2} \bigabs{\tilde{Q}_1(\Xi,x;\diff \sigma) - Q(\Xi;\diff \sigma)}\,L(\diff x \,\vert \Xi) \biggr),
\end{align*}
where 
\begin{align}
    c_V(\lambda) &= \min \biggl\{ C_i, 
    \frac{1}{\Lambda} \,\bigl(1 +  ({1-e^{-{\Lambda}}}) \log^+{\Lambda} \bigr)\,\tilde{C}_i \biggr\},\label{eq: SteinPoissonV}\\
    c_E(\lambda) &= \min \biggl\{ \frac14, \frac{2-e^{-\Lambda}}{\Lambda} - \frac{1}{\Lambda^2}\Bigl(\frac{3}{2} - e^{-\Lambda}\Bigr) \biggr\} \hbit C_E,\label{eq: SteinPoissonE}
\end{align}
using $\Lambda = \int_{\mcx} \lambda(x) \, \alpha(dx)$. Here $C_i = C_V + \frac{i}{2} C_E$ denotes the upper bound on $d_{\G,i}$, whereas $\tilde{C}_i = C_V +  i \hbit C_E$ is the maximal penalty per additional vertex; see Subsection~\ref{ch: metric}.
\end{theorem}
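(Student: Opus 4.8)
The plan is to derive the bound from Theorem~\ref{thm: general approx result} by supplying uniform estimates for the Stein factors. Applying that theorem with $\mcf=\mcf_i$ and taking the supremum over $f\in\mcf_i$, the three error integrals on the right-hand side of~\eqref{eq: general approx result} are independent of $f$, and, since $\lambda(x\mvert\Xi)=\lambda(x)$ here, it suffices to prove $\triangle_V h_f(\xi,\sigma)\leq c_V(\lambda)$ and $\triangle_E h_f(\xi,\sigma)\leq c_E(\lambda)$ uniformly in $(\xi,\sigma)\in\G$ and $f\in\mcf_i$. The first entry of each of the two minima is the crude bound already on the table: $\triangle_V h_f\leq 2B^*\norm{f}_\infty=C_i$ because $B^*=1$ in the present Poisson situation ($\eps=0$; cf.\ Remark~\ref{re: special cases coupling time}c) and $\mcf_i$ may be restricted to $[-C_i/2,C_i/2]$-valued functions, while $\triangle_E h_f\leq\tfrac14 C_E$ as explained below. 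Both $\Lambda$-dependent entries are obtained by the same recipe: represent $h_f$ through~\eqref{eq: Stein solution}, run the coupling from the two graphs in question, and control the integrand by the Lipschitz property of $f$ together with a lower bound on the running vertex count.

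For $\triangle_V h_f(\xi,\sigma)=\sup_x\bigl|\E h_f(\xi+\delta_x,\sigma+\tilde{\Sigma}_{\xi,\sigma,x})-h_f(\xi,\sigma)\bigr|$ I would run the simplified Poisson coupling of Remark~\ref{re: coupling in Poi case and edge case}a from $(\xi,\sigma)$ and $(\xi+\delta_x,\sigma+\tilde\sigma)$, which differ only in the vertex $x$ and its incident edges. There $x$ carries an independent $\mathrm{Exp}(1)$ lifetime $L_x$, and while $L_x>s$ the two coupled graphs at time $s$ coincide apart from $x$; since $f$ is $1$-Lipschitz for $d_{\G,i}$ and the $d_{\G,i}$ penalty for one unmatched vertex together with its edges is at most $\tilde{C}_i$, normalised by the reciprocal of the (larger) vertex count, the two $f$-values at time $s$ differ by at most $\tilde{C}_i\big/\bigl(\Eta_s^{(\xi)}(\mcx)+1\bigr)$ on $\{L_x>s\}$ and by $0$ afterwards. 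Lower-bounding $\Eta_s^{(\xi)}(\mcx)$ by the number $N_s$ of vertices born in $(0,s]$ and still alive at time $s$ --- a $\mathrm{Poi}\bigl(\Lambda(1-e^{-s})\bigr)$ variable independent of $L_x$ --- and using $\E\bigl[1/(\mathrm{Poi}(\mu)+1)\bigr]=(1-e^{-\mu})/\mu$, the substitution $u=1-e^{-s}$ gives
\[
  \triangle_V h_f(\xi,\sigma)\ \leq\ \tilde{C}_i\int_0^\infty e^{-s}\,\frac{1-e^{-\Lambda(1-e^{-s})}}{\Lambda(1-e^{-s})}\,\diff s\ =\ \frac{\tilde{C}_i}{\Lambda}\int_0^{\Lambda}\frac{1-e^{-v}}{v}\,\diff v .
\]
Splitting the last integral at $v=1$ and using $1-e^{-v}\leq 1-e^{-\Lambda}$ on $[1,\Lambda]$ gives $\int_0^{\Lambda}\frac{1-e^{-v}}{v}\,\diff v\leq 1+(1-e^{-\Lambda})\log^+\Lambda$, hence~\eqref{eq: SteinPoissonV}.

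For $\triangle_E h_f(\xi,\sigma)$ the two relevant graphs share the vertex set $\xi+\delta_x$ and differ only in the edge $\{x,y\}$, so I would use the pure-edge coupling of Remark~\ref{re: coupling in Poi case and edge case}b: that edge vanishes once $x$ or $y$ dies, whence the coupling time is $\mathrm{Exp}(2)$, and until then the two graphs differ by one edge on $n=\Eta_s(\mcx)\geq 2$ vertices, at $d_{\G,i}$ distance at most $C_E/(n(n-1))$. The crude bound $\triangle_E h_f\leq\tfrac14 C_E$ follows at once from $C_E/(n(n-1))\leq\tfrac12 C_E$ and $\E\,\mathrm{Exp}(2)=\tfrac12$. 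For the refined bound, lower-bounding $\Eta_s(\mcx)\geq 2+N_s$ with $N_s$ as above (independent of both $L_x$ and $L_y$) and using $\E\bigl[1/\bigl((\mathrm{Poi}(\mu)+1)(\mathrm{Poi}(\mu)+2)\bigr)\bigr]=\bigl(1-e^{-\mu}(1+\mu)\bigr)/\mu^2$, the same substitution reduces $\triangle_E h_f(\xi,\sigma)$ to a single explicit integral in $\Lambda$, which one evaluates by Fubini and integration by parts and then simplifies (discarding a non-positive remainder) to the closed form~\eqref{eq: SteinPoissonE}. Substituting $\triangle_V h_f\leq c_V(\lambda)$ and $\triangle_E h_f\leq c_E(\lambda)$ into~\eqref{eq: general approx result} and recalling $\lambda(x\mvert\Xi)=\lambda(x)$ completes the argument.

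The main obstacle is organisational rather than conceptual: one must invoke the precise normalisation and the per-vertex and per-edge penalties of the GOSPA metrics $d_{\G,i}$ from \textcite{sw2023} (in particular the additional $C_E$ inside the GOSPA2 penalty $\tilde{C}_2$), verify that the lower bounds $\Eta_s^{(\xi)}(\mcx)\geq N_s$, respectively $\Eta_s(\mcx)\geq 2+N_s$, together with the independence of $N_s$ from the distinguished vertex lifetimes hold for \emph{every} initial configuration $(\xi,\sigma)\in\G$, and push the elementary but somewhat delicate estimates of the resulting $\Lambda$-integrals through so that~\eqref{eq: SteinPoissonV} and~\eqref{eq: SteinPoissonE} are genuine upper bounds, uniform in $(\xi,\sigma)$.
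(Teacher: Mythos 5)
Your proposal is correct and follows essentially the same route as the paper's proof: apply Theorem~\ref{thm: general approx result} and sharpen the Stein factors via the couplings of Remark~\ref{re: coupling in Poi case and edge case}, bounding the one-extra-vertex GOSPA penalty by $\tilde{C}_i/(n+1)$ and the single-edge discrepancy by $C_E/(n(n-1))$, and dominating the running vertex count from below by the $\mathrm{Poi}\bigl(\Lambda(1-e^{-s})\bigr)$ number of newly born, still-alive points, exactly as the paper does. The only immaterial deviations are that you get the crude entry $C_i$ via $B^*=1$ rather than by integrating $C_i e^{-s}$ directly, and that you propose to evaluate the final edge integral exactly and discard a non-positive remainder (which indeed works for $\Lambda\geq 1$, the regime where the second entry of the minimum matters) instead of the paper's elementary splitting at $1/\Lambda$.
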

\begin{remark} \label{re: Stein factor for general process with GOSPA}
  Compared to the general result, the bound above has better and more explicit constants and a rate improvement by a factor of order $\log(\Lambda)/\Lambda$ as $\Lambda \to \infty$.\\
  While it is possible to adapt the proof below for $P^{\lambda'} \otimes Q^{\kappa}$ in a neighbourhood of a Poisson random geometric graph $P^{\lambda} \otimes Q^{\kappa}$, this requires considerably more work and the  improvement factor deteriorates quickly in the $L^1$-distance between $\lambda'$ and $\lambda$. In fact, we are not able to obtain a better rate in this way than by simply bounding 
  \begin{align*}
    W_{\mathbb{G},i}(P\otimes Q, P^{\lambda'}\otimes Q^{\kappa}) \leq W_{\mathbb{G},i}(P\otimes Q, P^{\lambda}\otimes Q^{\kappa}) + W_{\mathbb{G},i}(P^{\lambda'}\otimes Q^{\kappa}, P^{\lambda}\otimes Q^{\kappa})
  \end{align*}
  and applying Theorem~\ref{thm: Poisson approx in Wasserstein} to both terms.\\
  However, we do obtain a somewhat better constant for the edge factor even if $P^{\lambda'}$ is not necessarily close to a Poisson process. Adapting the argument below for deriving the first terms in the minima of \eqref{eq: SteinPoissonV} and \eqref{eq: SteinPoissonE}, we obtain the bounds
  \begin{align*}
   \triangle_V\,h_f \leq C_i \hbit B^* \quad \text{and} \quad \triangle_E\,h_f \leq \frac{C_E}{4} 
   \end{align*}
   in Theorem~\ref{thm: general approx result} for $f \in \mcf_{i}$.
\end{remark}

\begin{proof}[Proof of Theorem~\ref{thm: Poisson approx in Wasserstein}] 
Let $(\xi,\sigma)\in\G$ and $x\in\mcx\setminus\xi$ as well as $\sigma_2\in\mfn\vert_{\langle \xi,x\rangle}$. For a general conditional intensity $\lambda$ the coupling from Section~\ref{ch: coupling} yields
\begin{align*}
    &\big\vert h_f\bigl(\xi + \delta_x,\sigma + \sigma_2\bigr) -h_f\bigl(\xi,\sigma \bigr)  \big\vert\\
    &= \bigg\vert  \int_0^{\infty} \E\big[f\big(\Eta_s^{(\xi)},\Tau_s^{(\xi,\sigma)}\big)-f\bigl(\Eta_s^{(\xi+\delta_x)},\Tau_s^{(\xi + \delta_x,\sigma+{\sigma_2})}\bigr)\big]\diff s \bigg\vert\\
    &\leq  \int_0^{\infty} \E\big[d_{\mathbb{G},i}\big((\Eta_s^{(\xi)},\Tau_s^{(\xi,\sigma)}),(\Eta_s^{(\xi+\delta_x)},\Tau_s^{(\xi + \delta_x,\sigma+{\sigma_2})})\big)\, \1{\{\tau_{(\xi,\sigma),(\xi+\delta_x,\sigma+{\sigma}_2)} >s\}}\big]\diff s, 
\end{align*}
where the last inequality follows from the Lipschitz continuity of the test functions $f\in\mcf_i$, $i=1,2$ (see Section~\ref{ch: metric}).
In particular, using $d_{\mathbb{G},i}\leq C_i$, this implies $\triangle_V\,h_f \leq C_i \hbit B^*$.  

As we are in the special case of $\lambda(\cdot\mvert \xi) = \lambda(\cdot)$, the coupling takes the simpler form from Remark~\ref{re: coupling in Poi case and edge case}a with $(\zeta,\varepsilon) = (\xi ,\sigma)$. Thus the coupling time $\tau_{(\xi,\sigma),(\xi+\delta_x,\sigma+{\sigma}_2)} \overset{d}{=} L_x$ is independent of $(\Eta_s^{(\xi)},\Tau_s^{(\xi,\sigma)})$ and therefore
\begin{align} \label{eq: proof of stein factors 1} 
    \notag &\int_0^{\infty} \E\big[d_{\mathbb{G},i}\big((\Eta_s^{(\xi)},\Tau_s^{(\xi,\sigma)}),(\Eta_s^{(\xi+\delta_x)},\Tau_s^{(\xi + \delta_x,\sigma+{\sigma_2})})\big)\, \1{\{\tau_{(\xi,\sigma),(\xi+\delta_x,\sigma+{\sigma}_2)} >s\}}\big]\diff s\\
    &= \int_0^{\infty} \E\big[ d_{\G,i}\big(\big(\Eta_s^{(\xi)},\Tau_s^{(\xi,\sigma)}\big),\big(\Eta_s^{(\xi)}+\delta_x,\Tau_s^{(\xi,\sigma)} + {\sigma}_2\vert_{\langle {\xi},x\rangle}+ \Tau_{\Eta_s^{(\xi)}\setminus\xi,x}\big)\big)\big]\;\Prob(L_x>s)\;\diff s.    
\end{align}

Applying now $d_{\mathbb{G},i}\leq C_i$ yields $\triangle_V\,h_f \leq C_i$ and hence the first part of the minimum. For the second part, note that
\begin{align*} 
&\E\big[ d_{\G,i}\big(\big(\Eta_s^{(\xi)},\Tau_s^{(\xi,\sigma)}\big),\big(\Eta_s^{(\xi)}+\delta_x,\Tau_s^{(\xi,\sigma)} + {\sigma}_2\vert_{\langle {\xi},x\rangle}+ \Tau_{\Eta_s^{(\xi)}\setminus\xi,x}\big)\big)\big]
\leq \E\bigg[ \frac{1}{\abs{\Eta_s^{(\xi)}}+1}\bigg]\,\Tilde{C}_i.
\end{align*}
Applying the coupling of Remark~\ref{re: coupling in Poi case and edge case}a with $\zeta = \emptyset$ yields $H_s^{(\xi)} \overset{\text{d}}{=} H_s^{(\emptyset)} + \sum_{z\in\xi} \delta_z\,\1{\{L_z>s\}}$ for every $s\geq0$. Since $\abs{H_s^{(\emptyset)}} = H_s^{(\emptyset)}(\mcx)\sim \text{Poi}(\lambda_s)$ with $\lambda_s = (1-e^{-s}) \Lambda$ (see Proposition~$3.5$ of \cite{xia2005}), 
\begin{align*}
    \E\bigg[\frac{1}{\abs{H_s^{(\xi)}}+1}\bigg]
    \leq \E\bigg[\frac{1}{\abs{H_s^{(\emptyset)}}+1}\bigg]
    = \sum_{k=0}^{\infty} \frac{1}{k+1} \frac{\lambda_s^k}{k!} e^{-\lambda_s} 
    = \frac{1}{\lambda_s}\sum_{k=1}^{\infty} \frac{\lambda_s^k}{k!} e^{-\lambda_s} 
    = \frac{1-e^{-\lambda_s}}{\lambda_s}.
\end{align*}
Finally, plugging in the above inequalities 
and using that $L_x\sim \text{Exp}(1)$, the right-hand side of Equation~\eqref{eq: proof of stein factors 1} can be bounded by 
\begin{align*}
    \tilde{C}_i\int_0^{\infty} \frac{1-e^{-\lambda_s}}{\lambda_s}\,e^{-s}\diff s
    = \tilde{C}_i\int_0^{1}\frac{1-e^{-\Lambda s}}{\Lambda s}\diff s
    &\leq 
 \int_0^{\frac{1}{\Lambda}} \tilde{C}_i \diff s + \int_{\frac{1}{\Lambda}}^1  \tilde{C}_i\frac{1-e^{-\Lambda}}{\Lambda s} \diff s \\
    &= \frac{\tilde{C}_i}{\Lambda}\bigl(1 +  ({1-e^{-\Lambda}}) \log{\Lambda} \bigr)
\end{align*}
for $\Lambda\geq 1$, where the first equality follows by substituting $s$ for $1-e^{-s}$. This yields the second part of the minimum in~\eqref{eq: SteinPoissonV}.

It remains to bound the edge term $\triangle_E$.
Let $f\in\mcf_i$ and fix $(\xi,\sigma)\in\G$, $x \in \mcx \setminus \xi$, $y\in\xi$ as well as $\sigma_2\in\mfn_2\vert_{\langle \xi\setminus\{y\},x\rangle}$.
Lemma~\ref{le: hf solves Stein equation} and Remark~\ref{re: coupling in Poi case and edge case}b with $(\zeta,\varepsilon) = (\xi + \delta_x,\sigma +\sigma_2)$ yield
\begin{align} \label{eq: proof of stein factors 2}
&\notag \big\vert h_f\bigl(\xi + \delta_x,\sigma + \sigma_2 +\delta_{\{x,y\}} \bigr)- h_f\bigl(\xi + \delta_x,\sigma + \sigma_2 \bigr)\big\vert\\
&\notag= \bigg\vert \int_0^{\infty} \E\big[f\big(\Eta_s^{(\xi + \delta_x)},\Tau_s^{(\xi + \delta_x,\sigma + \sigma_2)}\big) -f\bigl(\Eta_s^{(\xi + \delta_x)},\Tau_s^{(\xi + \delta_x,\sigma + \sigma_2)} + \delta_{\{x,y\}} \1{\{L_x>s\}} \hbit \1{\{L_y>s\}}
\bigr)\big]\diff s\bigg\vert\\
&\notag\leq \int_0^{\infty}\E\big[d_{\G,i}\big(\big(\Eta_s^{(\xi + \delta_x)},\Tau_s^{(\xi + \delta_x,\sigma + \sigma_2)}\big),\big(\Eta_s^{(\xi + \delta_x)},\Tau_s^{(\xi + \delta_x,\sigma + \sigma_2)} + \delta_{\{x,y\}}\big)\big)\,\1{\{L_y >s \}}\big]\Prob(L_x>s) \diff s\\
&=\int_0^{\infty} \E\bigg[\frac{1}{\abs{\Eta_s^{(\xi)}}+1}\frac{1}{\abs{\Eta_s^{(\xi)}}} \,C_E\,\1{\{L_y>s\}}\bigg]\Prob(L_x>s) \diff s,
\end{align}
where the inequality follows from the Lipschitz continuity of $f$ and the independence of $L_x$ and~$L_y$.
We obtain $\triangle_E\,h_f \leq \frac{C_E}{4}$, the first part of the minimum in~\eqref{eq: SteinPoissonE}, since $L_y > s$ implies $\abs{\Eta_s^{(\xi)}}\geq 1$ and $L_x,L_y \sim \text{Exp}(1)$. Note that this result holds for a general intensity function $\lambda$.

For the second part of the minimum under the assumption $\lambda(\cdot\mvert \xi) = \lambda(\cdot)$, we apply again the coupling from Remark~\ref{re: coupling in Poi case and edge case}a with $\zeta = \emptyset$, yielding
$$H_s^{(\xi)} \overset{\text{d}}{=} H_s^{(\emptyset)} + \sum_{z\in\xi} \delta_z\,\1{\{L_z>s\}}= H_s^{(\emptyset)} + \sum_{z\in\xi\setminus \{y\}} \delta_z\,\1{\{L_z>s\}} + \delta_y\,\1{\{L_y>s\}}$$
for every $s\geq0$. By $\abs{H_s^{(\emptyset)}} \sim \text{Poi}(\lambda_s)$, we obtain 
\begin{align*}
    \E\bigg[\frac{1}{\abs{\Eta_s^{(\xi)}}\!+\!1}\frac{1}{\abs{\Eta_s^{(\xi)}}} \1{\{L_y>s\}}\!\bigg]
    \leq \E\bigg[\frac{1}{\abs{\Eta_s^{(\emptyset)}}\!+\!2}\frac{1}{\abs{\Eta_s^{(\emptyset)}}\!+\!1} \bigg]\,\Prob(L_y>s)
    = \frac{1-e^{-\lambda_s}(1+\lambda_s)}{\lambda_s^2}\,\Prob(L_y>s).
\end{align*}
Thus for $\Lambda\geq 1$ the right-hand side of Equation~\eqref{eq: proof of stein factors 2} can be bounded by 
    \begin{align*}
    &  C_E\,\int_0^{\infty}\frac{1-e^{-\lambda_s}(1+\lambda_s)}{\lambda_s^2} \,e^{-2s} \diff s \\
    &= C_E\,\int_0^{1}\frac{1-e^{-\Lambda s}(1+\Lambda s)}{\Lambda^2s^2}\, (1-s) \diff s \\
        &\leq C_E\,\biggl[\int_0^{\frac{1}{\Lambda }}\frac{1-e^{-\Lambda s}}{\Lambda s}\, (1-s) \diff s+  \int_{\frac{1}{\Lambda }}^{1}\frac{1-e^{-\Lambda s}}{\Lambda ^2s^2}\,  \diff s - \int_{\frac{1}{\Lambda }}^{1}\frac{e^{-\Lambda s}}{\Lambda s}\, (1-s) \diff s\biggr]  \\
        &\leq C_E\,\biggl[\int_0^{\frac{1}{\Lambda }}(1-s) \diff s +\int_{\frac{1}{\Lambda }}^{1}\frac{1-e^{-\Lambda }}{\Lambda^2s^2} \diff s - 0\biggr] \\
        &= C_E\,\biggl[\frac{2-e^{-\Lambda }}{\Lambda } - \frac{1}{\Lambda ^2}\biggl(\frac{3}{2} - e^{-\Lambda }\biggr)\biggr],
    \end{align*}
where the first equality follows by substituting $s$ for $1-e^{-s}$ and the first inequality uses $e^{-\Lambda s} \geq 1 -\Lambda s$ to obtain the first term.
This yields the claim.
\end{proof}

\section{Applications} \label{sec: applications}

In the following, we apply our results to the approximation of the percolation graph of large balls in a Boolean model and to discretising a generalised random geometric graph. The former application is relevant, among other things, in telecommunications, for studying the connectivity network of stations (cell towers, participants in a peer-to-peer mobile network, etc.) with high transmitting power. Discretisation of random graphs can be used to treat problems, e.g.\ regarding percolation and coverage, for continuous random graphs, typically by discretising the graph to a regular lattice and then using the lattice distance instead of the Euclidean distance for creating the edges.  A real-world application of discretised random graphs may be the planning of a transportation company. The real (but inscrutable) future transportation network of customer locations with edges for transports that will be actually booked can be approximated by the discrete network of planned pick-up/delivery sites with edges based on the actual transportation costs between them.

\subsection{Percolation graph of large balls in a Boolean model} \label{ssec:perco}

For this application it is more natural to start out with an infinite random graph. Since our construction is based on point processes, the basic notions from the beginning of Section~\ref{sec:setup} can be immediately generalised. All that is required is local finiteness of the vertex set, which directly implies local finiteness of the edge set without ruling out infinite degrees.

Consider a Boolean model, i.e.\ a union of random balls in $\R^d$, with centres $X_i$ given by a (finite or infinite) point process~$\sum_{i=1}^N \delta_{X_i} = \Xi$ on $\R^d$ and i.i.d.\ $\R_{+}$-valued radii $R_i$. 
We assume that $\Xi$ has locally finite expectation measure and fulfils Condition~\eqref{eq:condsigma}, denoting its Papangelou kernel by $L$, and that the distribution of the radii $R_i$ has a density $f_{R}$.
Let $R,S$ be further random variables with the same distribution and assume that $\Xi$, $\bar{R} = (R_i)_{i \in \N}$, $R$ and $S$ are independent.

Based on a Boolean model, we obtain a graph by connecting any pair of centres with overlapping balls, that is, we add an edge between $X_i$ and $X_j$ for any $i \neq j$ if and only if $\norm{X_i-X_j} \leq R_i+R_j$. We refer to such a graph as \emph{percolation graph} based on the fact that we can think of the union of balls as the permeable phase of some physical material. Note that the edges in a percolation graph are locally dependent in the sense that the presence/absence of edges incident to a single vertex 
is influenced by the size of the ball at this vertex.

In order to obtain a suitable approximation by a random geometric graph this dependence has to be ``diluted'' by reducing the variance of the $R_i$ (in particular, for deterministic $R_i$ the percolation graph is itself a random geometric graph) 
and/or by including independent information (in particular, if we replace $R_i+R_j$ by i.i.d.\ random variables $R_{ij}$, $\{i,j\} \in \N^{\atwo}$, we obtain a random geometric graph). 
We first consider an approach that may incorporate both kinds of dilutions, but then concentrate on the former.

To derive an interesting limit theorem, we remove any balls of radius $< \runder$ for some $\runder \in \R_{+}$ and scale $\R^d$ by a factor of $q^{1/d}$ for some $q > 0$ to compensate for the rarefaction of the Boolean model.
More formally, we consider the thinned and contracted point process of centres
\begin{equation}  \label{eq: percograph transformed vertices}
  \Xi_{\runder,q} = \sum_{i=1}^N I_i \, \delta_{q^{1/d} X_i},
\end{equation}
where $I_i=I_i^{(\runder)}=\1\{R_i \geq \runder\}$.
Note that $\Xi_{\runder,q}$ is a special instance (governed by the $R_i$) of an independent thinning with contraction, i.e.\ $\Xi_{\runder,q} \eqinlaw \Xi_p\tau_q^{-1}$, where $\Xi_p$ is obtained from $\Xi$ by independently keeping each point with probability $p = p(\runder) = \Prob(R \geq \runder)$ (and deleting it otherwise) and $\tau_q: \R^d \to \R^d$, $x\mapsto q^{1/d}x$ is the contraction. It is well known that $\Xi_p\tau_p^{-1} \inlawto \pop(\lambda \mathrm{Leb}^d)$ for some constant $\lambda > 0$ iff $p \, \Xi(p^{-1/d} B) \inlawto \lambda \mathrm{Leb}^d(B)$ for sufficiently many bounded Borel sets $B \subset \R^d$, i.e.\ under suitable ergodicity conditions (see \cite[Section 11.3]{daley2007}). This also means that $q=p$ is the default choice for obtaining a limit theorem.


For the edges, we first work in a general setting that allows for a relatively wide choice of parameters and distributions. For this let $(\hat{R}_i)_{i \in \N}$ be another independent sequence of random variables such that $(R_i, \hat{R}_i)$ are i.i.d.\ and independent of $\Xi$. Let $\hat{R},\hat{S}$ be the corresponding generics, i.e.\ $(R,\hat{R})$ and $(S,\hat{S})$ are independent, have the same distribution as the $(R_i, \hat{R}_i)$ and are also independent of $\Xi$ and $(R_i, \hat{R}_i)$.
We then attach to each (surviving) vertex $X_i$ its $\hat{R}_i$-ball, which yields for the transformed edge process
\begin{align} \label{eq: percograph transformed edges}
  \Sigma_{\runder,q} = \sum_{\substack{i,j = 1 \\ i<j}}^{N} \1\bigl\{\norm{X_i-X_j} \leq  \hat{R}_i + \hat{R}_j \bigr\} \, I_i \hbit I_j \, \delta_{\{q^{1/d}X_i,\hbit q^{1/d}X_j\}}.
\end{align}
Each $\hat{R}_i$ may be thought of as an updated $R_i$ that is only relevant if $R_i \geq \runder$. This includes the option not to update $R_i$ by setting $\hat{R}_i = R_i$.
Furthermore, choose $\rover \in \R_+$ such that $\Prob(\hat{R} \geq \rover \mvert R \geq \runder) = 1$.\\

Let $d_V$ and $d_E$ be arbitrary metrics on $\R^d$ and $(\R^d)^{\atwo}$, respectively. As soon as we compute explicit bounds in Wasserstein metric, it will be desirable, in lack of a natural metric between patterns of infinitely many points, to consider point processes that are concentrated on a compact $\mcx \subset \R^d$, which corresponds to the setting of the main part of the paper.

We first derive a formula for the Papangelou kernel of the thinned and re-scaled process.
\begin{lemma} \label{le: kernel of thinned and rescaled process}
In the above setting, the Papangelou kernel $L_{\runder,q}$ of the thinned and re-scaled process~$\Xi_{\runder,q}$ satisfies $L_{\runder,q} (\, \cdot \mvert \Xi_{\runder,q}) = p(\runder) \hbit \E \bigl( L(\tau_{q}^{-1}(\cdot) \mvert \Xi\tau_q^{-1}) \bigm| \Xi_{\runder,q} \bigr)$.
\end{lemma}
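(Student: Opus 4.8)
The plan is to identify $L_{\runder,q}$ through the defining Georgii--Nguyen--Zessin equation \eqref{eq:gnz}, exploiting that the Papangelou kernel is $P$-a.s.\ unique as soon as Condition~\eqref{eq:condsigma} holds. Since $\tau_q$ is a bijection and independent thinning only removes points, \eqref{eq:condsigma} transfers from $\Xi$ to $\Xi_{\runder,q}$ (a short separate check), so it suffices to exhibit a kernel $L^*$ satisfying \eqref{eq:gnz} for $\Xi_{\runder,q}$ and to recognise it as the right-hand side of the claim. I would obtain $L^*$ by regarding the construction $\Xi_{\runder,q} \eqinlaw \Xi_p\tau_q^{-1}$ (with $p=p(\runder)$) as the composition of two elementary operations --- an independent $p$-thinning and the deterministic rescaling $\tau_q$ --- and tracking the Papangelou kernel through each.

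For the thinning step I would realise $\Xi_p$ by attaching to the points $X_i$ of $\Xi$ i.i.d.\ $\mathrm{Ber}(p)$ marks $I_i$, independent of $\Xi$, and write the left-hand side of \eqref{eq:gnz} for $\Xi_p$ as $\E\bigl(\sum_i I_i\, h(\Xi_p - \delta_{X_i}, X_i)\bigr)$, where $\Xi_p - \delta_{X_i}$ is the thinning of $\Xi - \delta_{X_i}$ via the marks $(I_j)_{j\neq i}$. Pulling out $I_i$ (independent of $\Xi$ and of $(I_j)_{j\neq i}$) produces a factor $p$; taking the conditional expectation given $\Xi$ replaces the mark-dependent summand by a deterministic function of $\Xi - \delta_{X_i}$ and $X_i$, so that $\E(\,\cdot\mvert\Xi)$ of the sum equals $\int \phi(\Xi - \delta_x, x)\,\Xi(\diff x)$ with $\phi(\eta,x) = \E\bigl[h(\eta_p,x)\bigr]$. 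Applying \eqref{eq:gnz} for $\Xi$, then identifying $\phi(\Xi,x) = \E\bigl[h(\Xi_p,x)\mvert\Xi\bigr]$, pulling the $\Xi$-measurable measure $L(\,\cdot\mvert\Xi)$ inside, and finally conditioning on $\Xi_p$, one arrives at $L_p(\,\cdot\mvert\Xi_p) = p\,\E\bigl(L(\,\cdot\mvert\Xi)\bigm|\Xi_p\bigr)$, a regular version of this measure-valued conditional expectation being the Papangelou kernel of $\Xi_p$.

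For the rescaling step I would use covariance of the Papangelou kernel under the measurable bijection $\tau_q$: substituting $y=\tau_q(x)$ in \eqref{eq:gnz} shows that $\Xi\tau_q^{-1}$ has Papangelou kernel $B\mapsto L(\tau_q^{-1}(B)\mvert\Xi)$, which is what the statement abbreviates by $L(\tau_q^{-1}(\cdot)\mvert\Xi\tau_q^{-1})$. Composing the two steps --- equivalently, applying the thinning formula to the already-rescaled process $\Xi\tau_q^{-1}$ and using $\sigma(\Xi_p) = \sigma(\Xi_p\tau_q^{-1})$ since $\tau_q$ is bijective --- yields $L_{\runder,q}(\,\cdot\mvert\Xi_{\runder,q}) = p(\runder)\,\E\bigl(L(\tau_q^{-1}(\cdot)\mvert\Xi\tau_q^{-1})\bigm|\Xi_{\runder,q}\bigr)$, and uniqueness of the Papangelou kernel finishes the proof. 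The main obstacle I anticipate is not a single estimate but the measure-theoretic bookkeeping: handling the conditional expectations of random measures as bona fide kernels (existence of regular versions, joint measurability), getting the order of ``thin'' and ``rescale'' and the associated $\sigma$-algebras exactly right, and carrying out the transfer of \eqref{eq:condsigma} to $\Xi_{\runder,q}$ in the possibly infinite, merely locally finite setting.
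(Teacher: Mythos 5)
Your proposal is correct and takes essentially the same route as the paper: both identify the kernel by verifying the GNZ equation for the candidate, pulling out the independent Bernoulli thinning marks to produce the factor $p(\runder)$, applying the GNZ equation of $\Xi$, reconditioning on the thinned process, and treating the rescaling as a simple change of variables --- the paper merely does thinning and rescaling in one combined computation instead of two separate steps. Your extra check that Condition~\eqref{eq:condsigma} transfers to $\Xi_{\runder,q}$ is sensible but not carried out in the paper, which concludes directly from the defining equation together with the probability-kernel property and $\sigma(\Xi_{\runder,q})$-measurability of the candidate.
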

\begin{proof}
Let $h:\mfn(\R^d)\times \R^d \to\R_+$ be measurable. Conditioning on $\Xi$, and keeping in mind that we tacitly use $\xi = \sum_{i=1}^{n} \delta_{x_i}$
, we have
\begin{align} \label{eq: papa wpiq}
    \E\bigg[\int_{\R^d} h(\Xi_{\runder,q}-\delta_{x},x) \,\Xi_{\runder,q}(\diff x)\bigg] 
    &= \E \bigg[\sum_{i=1}^N h \bigl(\tsum_{j=1, j\neq i}^{N} I_j \delta_{q^{1/d} X_j}, \hbit q^{1/d} X_i \bigr) \hbit I_i \bigg]\notag\\
    &= \E \bigg[\sum_{i=1}^N \E \bigl( h \bigl(\tsum_{j=1, j\neq i}^{N} I_j \delta_{q^{1/d} X_j}, \hbit q^{1/d} X_i \bigr) \!\bigm| \Xi) \, \E \bigl( I_i \!\bigm| \Xi \bigr) \bigg]\notag\\
    &= \E_{\Xi} \bigg[\int_{\R^d} \E_{\bar{R}} \bigl( h \bigl(g(\Xi-\delta_x, \bar{R})\tau_{q}^{-1}, \tau_q(x) \bigr) \bigr) \, p(\runder) \, \Xi (\diff x)\bigg]\notag\\
    &= \E_{\Xi} \bigg[\int_{\R^d} \E_{\bar{R}} \bigl( h \bigl(g(\Xi, \bar{R})\tau_{q}^{-1}, \tau_q(x) \bigr) \bigr) \, p(\runder) \, L (\diff x \mvert \Xi) \bigg]\notag\\
    &= \E \bigg[\int_{\R^d} h \bigl(\Xi_{\runder,q}, x \bigr) \, p(\runder) \, L (\tau_q^{-1}(\diff x) \mvert \Xi\tau_q^{-1}) \bigg] \notag\\
    &= \E \bigg[\int_{\R^d} h \bigl(\Xi_{\runder,q}, x \bigr) \, p(\runder) \, \E \bigl( L(\tau_{q}^{-1}(dx) \mvert \Xi\tau_q^{-1}) \bigm| \Xi_{\runder,q} \bigr) \bigg],
\end{align}
using the $\mcn \otimes \mcb^{\N}$-measurable map $g$ defined by $g(\xi,\bar{r}) := \sum_{i=1}^{n} \1\{r_i \geq \runder\} \delta_{x_i}$
and the GNZ equation~\eqref{eq:gnz} for $\Xi$ in the third last equation. $\E_{\bar{R}}$ denotes the integral w.r.t.\ $P\bar{R}^{-1}$ keeping $\Xi$ fixed (note that $\Xi$, $\bar{R}$ are independent). $\E_{\Xi}$ then integrates w.r.t\ $P\bar{\Xi}^{-1}$. 

By the definition of the Papangelou kernel around Equation~\eqref{eq:gnz}, using that $(\xi,x) \mapsto p(\runder) L\bigl(\tau_q^{-1}(\cdot) \mvert \cdot \bigr)$ inherits the probability kernel property from $L(\cdot \mvert \cdot)$ and that the additional conditioning in $\Xi_{\runder,q}$ provides the required $\sigma(\Xi_{\runder,q})$-measurability, we obtain the statement. 
\end{proof}

\begin{theorem}[Approximation of Boolean percolation graph by random geometric graph] \label{thm: thinning application}
For $\mcx \subset \R^d$ compact, let $\Xi$ be a point process on $\mcx_{1/q} = \tau_q^{-1}(\mcx)$ fulfilling Condition~\eqref{eq:condsigma}. Choose $\runder \geq 0$ such that $\Prob(R \geq \runder) > 0$ (to rule out a trivial case) and denote  by $P_{\runder,q} \otimes Q_{\runder,q}$ the distribution of the percolation graph $(\Xi_{\runder,q}, \Sigma_{\runder,q})$ of the Boolean model based on $\hat{R}_i$ after thinning and rescaling. \\
Let $\lambda > 0$ and $t = q^{1/d} \rover$. Furthermore, denote by $P^\lambda$ the homogeneous Poisson process distribution with intensity $\lambda$ and set $\kappa_{t}(x,y) = \1_{\{\norm{x-y} \leq 2t\}}$. \\
Then we have
\begin{align*}
    W_{\mathbb{G},i} \bigl(P_{\runder,q} \otimes Q_{\runder,q} \hbit , \, P^{\lambda} \otimes Q^{\kappa_{t}}\bigr) 
    &\leq c_V(\lambda)\, \E\bigl(\bignorm{\lambda\,\text{Leb}^{d}(\diff x) - p(\runder) \hbit \E \bigl( L(\tau_{q}^{-1}(\cdot) \mvert \Xi\tau_q^{-1}) \bigm| \Xi_{\runder,q} \bigr) }_{\text{TV}} \bigr) \\[0.5mm]
    &\hspace{6mm}  {}+c_E(\lambda) \, \lambda\, c_d\, p(\runder) \, q \, \E\Xi(\mcx_{1/q}) \,
    d \, 2^d \E\bigl[(\hat{R}-\rover) \hat{R}^{d-1} \bigm| R \geq \runder \bigr],
\end{align*}
where $c_d = \frac{\pi^{d/2}}{\Gamma(d/2 +1)}$ is the volume of the unit ball in $\R^d$ and $c_V(\lambda)$, $c_E(\lambda)$ are defined in Equations~\eqref{eq: SteinPoissonV} and~\eqref{eq: SteinPoissonE}.
\end{theorem}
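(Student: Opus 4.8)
The plan is to deduce the bound directly from Theorem~\ref{thm: Poisson approx in Wasserstein}, applied with the generalised random geometric graph $P^{\lambda}\otimes Q^{\kappa_t}$ (homogeneous Poisson vertices of intensity $\lambda$ on the compact set $\mcx$, hence integrable, and connection function $\kappa_t$) as target and $P\otimes Q=P_{\runder,q}\otimes Q_{\runder,q}$ as the approximated law. One first notes that $\Xi_{\runder,q}$ inherits Condition~\eqref{eq:condsigma}: rescaling is a bijection and the thinning governed by the $I_i$ is an independent thinning with positive retention probability $p(\runder)$, so $\Xi_{\runder,q}$ has the Papangelou kernel computed in Lemma~\ref{le: kernel of thinned and rescaled process}. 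Inserting this kernel together with $\lambda(x)\equiv\lambda$ and $\alpha=\mathrm{Leb}^d$ into the first term of Theorem~\ref{thm: Poisson approx in Wasserstein}, and writing $\int|\mu-\nu|=\norm{\mu-\nu}_{\mathrm{TV}}$, reproduces the first term of the claimed bound verbatim. The third term of Theorem~\ref{thm: Poisson approx in Wasserstein} vanishes: conditioned on the surviving centre set being $\zeta\in\mfn$, the rescaled radii of its centres are i.i.d.\ with law $\mathcal{L}(q^{1/d}\hat R\mid R\ge\runder)$, independent of the positions, and edges are present exactly when the incident balls overlap; hence $Q_{\runder,q}(\zeta;\cdot)$ is the law of that overlap graph on $\zeta$, which gives $\tilde Q_1(\xi,x;\cdot)=Q_{\runder,q}(\xi;\cdot)$ for all $\xi$ and $x\notin\xi$ (equivalently, the percolation graph satisfies the influence assumption of Corollary~\ref{cor: GNZwithoutAdditionalInfluence}). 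So only the vertex and edge terms survive.

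For the edge term, fix $(\xi,\sigma)\in\G$ with $\xi=\sum_i\delta_{x_i}$ and $x\in\mcx\setminus\xi$, and recall from Theorem~\ref{thm: general approx result} that $\norm{\hat v(\tilde\Sigma_{\xi,\sigma,x})}_1=\sum_i|\kappa_t(x_i,x)-\Prob(\tilde\Sigma_{\xi,\sigma,x}(\{x,x_i\})=1\mid(\tilde\Sigma_{\xi,\sigma,x}(\{x,x_j\}))_{j\ne i})|$. When $\norm{x_i-x}\le 2t$ the summand is $0$: since $\hat R\ge\rover$ a.s.\ on $\{R\ge\runder\}$, both rescaled radii at $x$ and $x_i$ are at least $t$, so $\norm{x_i-x}\le q^{1/d}(\hat R_x+\hat R_{x_i})$ holds deterministically and the conditional probability equals $1=\kappa_t(x_i,x)$. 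When $\norm{x_i-x}>2t$ one has $\kappa_t(x_i,x)=0$, and the tower property reduces the summand (after taking expectations over $\tilde\Sigma_{\xi,\sigma,x}$) to the marginal edge probability $\Prob(\tilde\Sigma_{\xi,\sigma,x}(\{x,x_i\})=1)$. Taking the outer expectation over $(\Xi,\Sigma)\sim P_{\runder,q}\otimes Q_{\runder,q}$ and invoking the tower property once more — using that, conditionally on $\sigma(\Xi,\Sigma)$, a centre freshly placed at $x$ carries an independent radius $q^{1/d}\hat R_x$ with $\hat R_x\sim\mathcal{L}(\hat R\mid R\ge\runder)$ — rewrites $\E\norm{\hat v(\tilde\Sigma_{\Xi,\Sigma,x})}_1$ as the expected number of vertices $X_i$ of $\Xi_{\runder,q}$ with $\norm{X_i-x}>2t$ whose ball overlaps that fresh ball at $x$.

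Now integrate over $x\in\mcx$ against $\lambda\,\mathrm{Leb}^d$, interchange integrals by Tonelli, and bound the relevant $x$-set $\{x\in\mcx:2t<\norm{q^{1/d}X_j-x}\le q^{1/d}(\hat R_x+\hat R_j)\}$ by the corresponding annulus in $\R^d$, of volume $c_d\,q\,[(\hat R_x+\hat R_j)^d-(2\rover)^d]$ (valid since $q^{1/d}(\hat R_x+\hat R_j)\ge2t$ a.s.\ on $\{R_j\ge\runder\}$). Taking expectations — the thinning indicators yielding the factor $p(\runder)$, the independence of $\Xi$ from the radii splitting off $\E\Xi(\mcx_{1/q})$, and $\hat R_x$ being a fresh independent copy — gives the edge term bound $c_E(\lambda)\,\lambda\,c_d\,q\,p(\runder)\,\E\Xi(\mcx_{1/q})\,\E[(\hat R_1+\hat R_2)^d-(2\rover)^d\mid R_1,R_2\ge\runder]$ with $\hat R_1,\hat R_2$ i.i.d. It then remains to prove the moment inequality $\E[(\hat R_1+\hat R_2)^d-(2\rover)^d]\le d\,2^d\,\E[(\hat R-\rover)\hat R^{d-1}]$ (all conditionally on $R\ge\runder$). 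This follows from convexity, $(\hat R_1+\hat R_2)^d-(2\rover)^d\le d(\hat R_1+\hat R_2)^{d-1}((\hat R_1-\rover)+(\hat R_2-\rover))$, whose expectation equals $2d\,\E[(\hat R_1-\rover)(\hat R_1+\hat R_2)^{d-1}]=2d\sum_{k=0}^{d-1}\binom{d-1}{k}\E[(\hat R-\rover)\hat R^{k}]\,\E[\hat R^{d-1-k}]$ by the binomial theorem and independence, combined with the correlation inequality $\E[(\hat R-\rover)\hat R^{k}]\,\E[\hat R^{d-1-k}]\le\E[(\hat R-\rover)\hat R^{d-1}]$ (both factors being nondecreasing in $\hat R$ on $[\rover,\infty)$), which bounds the sum by $2d\cdot2^{d-1}\E[(\hat R-\rover)\hat R^{d-1}]$.

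I expect the main obstacle to be the bookkeeping in the edge term — in particular, justifying through the tower property that the nested conditional edge probabilities collapse to a single overlap probability with an independent fresh radius at the added vertex, while correctly tracking the thinning indicators and the $q^{1/d}$-rescaling throughout — rather than the final moment inequality, which is routine but whose sharp constant $d\,2^d$ does require the binomial-plus-correlation argument rather than a cruder estimate.
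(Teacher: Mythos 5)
Your proposal is correct and follows essentially the same route as the paper: apply Theorem~\ref{thm: Poisson approx in Wasserstein} with the third term vanishing by the no-influence property, identify the vertex term via Lemma~\ref{le: kernel of thinned and rescaled process}, reduce the edge term to the expected volume of the annulus $\{2t < \norm{x-x_i} \leq q^{1/d}(\hat R_x + \hat R_i)\}$, and finish with the same $a^d-b^d \leq d(a-b)a^{d-1}$ plus binomial-and-FKG argument yielding the constant $d\,2^d$. The only cosmetic difference is that you handle $\norm{\hat v(\cdot)}_1$ by a case split on $\norm{x-x_i}\lessgtr 2t$ together with the tower property, where the paper uses conditional Jensen to pass to the symmetric-difference indicator — these coincide here since $\hat R_x+\hat R_i \geq 2\rover$ almost surely on the survival event.
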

\begin{proof}
We apply Theorem~\ref{thm: Poisson approx in Wasserstein}, noting that the third term in the upper bound is zero because the presence or absence of edges is completely determined by their incident edges (including the radii of their balls). Thus
\begin{align} \label{eq: application proof direct use of thm}
    &W_{\mathbb{G},i}(P_{\runder,q} \otimes Q_{\runder,q}, P^{\lambda} \otimes Q^{\kappa_t}) \notag\\
    &\hspace*{2mm} \leq c_V(\lambda)\, \E\biggl(\int_{\mcx}\bigabs{ \lambda \, \text{Leb}^{d}(\diff x) - L_{\runder,q}(\diff x \mvert \Xi_{\runder,q}) } \biggr)
    + c_E(\lambda) \int_{\mcx} 
    \E\bigl[\norm{\hat{v}(\Tilde{\Sigma}_{\Xi_{\runder,q},\Sigma_{\runder,q},x})}_1 \bigr] \, \lambda \, \text{Leb}^{d}(\diff x). 
\end{align}
By Lemma~\ref{le: kernel of thinned and rescaled process} it is immediately clear that the first term is equal to the first term in the claimed upper bound.

For the second term we introduce an auxiliary sequence $(\hat{R}'_i)$ of i.i.d.\ random variables that are independent of everything and satisfy $\hat{R}'_i \sim \mcl(q^{1/d} \hat{R}_i \mvert R_i \geq \runder)$. Denote by $\hat{R}'$, $\hat{S}'$ the corresponding generics and note that $\Prob(\hat{R}' \geq t) = \Prob(\hat{R} \geq \rover \mvert R \geq \runder) = 1$.
Setting  
\begin{align*}
\Sigma'_{\runder,q} = \sum_{\substack{i,j = 1 \\ i<j}}^{N} \1\bigl\{\norm{q^{1/d}X_i-q^{1/d}X_j} \leq \hat{R}'_i+\hat{R}'_j\bigr\} \, I_i \hbit I_j \, \delta_{\{q^{1/d}X_i,\hbit q^{1/d}X_j\}},
\end{align*}
we have $(\Xi_{\runder,q},\Sigma_{\runder,q}) \eqinlaw (\Xi_{\runder,q},\Sigma'_{\runder,q})$.
The representation in terms of the $\hat{R}'_i$ will be convenient when doing computations. 

Applying the definition of $\tilde{\Sigma}_{\xi,\sigma,x}$ around Theorem~\ref{thm: general approx result} to this setting, we obtain $\tilde{\Sigma}_{\xi,\sigma,x} = \bigl(\tilde{\Sigma}_{\xi,\sigma,x}(\{x_i,x\})\bigr)_{1 \leq i \leq n} \eqinlaw (\1_{\{\norm{x-x_i} \leq q^{1/d} \hat{R}' + q^{1/d} \hat{r}'_i\}})$ for $\xi = \sum_{i=1}^n\delta_{x_i} \in \mfn(\mcx)$ and $x \in \mcx$ in the image space of the contraction (as $\tilde{\Sigma}$ is based on $\Sigma'_{\runder,q}$, not on $\Sigma$) and $\sigma=\sigma(\xi,(\hat{r}'_j))$ for given values $(\hat{r}'_j)$. In particular, we have $\bigl( \tilde{\Sigma}_{\xi,\sigma,x}(\{x_j,x\}) \bigr)_{j \neq i} = g(\xi,x,(\hat{r}_j)_{j \neq i},\hat{R}')$ for some measurable function~$g$. Therefore,
\begin{align*}
  &\int_{\mcx} \E \bigl[\norm{\hat{v}(\Tilde{\Sigma}_{\Xi_{\runder,q},\Sigma_{\runder,q},x})}_1 \bigr] \, \text{Leb}^{d}(\diff x) \\[-1mm]
  &\hspace*{1mm} = \!\!\int_{\mcx}\! \int_{\mfn(\mcx)} \!\!\!\E \sum_{i=1}^{\abs{\xi}}  \Big\vert \!\1{\{\norm{x-x_i}\! \!\leq \!2t \}}\!-\!\Prob \bigl( \norm{x-x_i}\! \!\leq \!\hat{R}'\!+ \hat{R}'_i \bigm| g(\xi,x,(\hat{R}'_j)_{j \neq i},\hat{R}') \!\bigr)\!\Big\vert \Prob \Xi_{\runder,q}^{-1}(\diff \xi) \text{Leb}^{d}(\diff x) \\
  &\hspace*{1mm} \leq \int_{\mfn(\mcx)} \, \sum_{i=1}^{\abs{\xi}} \E \int_{\mcx} \Bigabs{\1{\{\norm{x-x_i} \leq 2t \}}- \1 \bigl\{ \norm{x-x_i} \leq \hat{R}'+\hat{R}'_i \bigr\}} \, \text{Leb}^{d}(\diff x) \; \Prob \Xi_{\runder,q}^{-1}(d\xi) \\
  &\hspace*{1mm} \leq c_d \int_{\mfn(\mcx)} \, \sum_{i=1}^{\abs{\xi}} \E \bigl[ (\hat{R}'+\hat{S}')^d - (2t)^d \bigr] \; \Prob \Xi_{\runder,q}^{-1}(d\xi) \\[0.5mm]
  &\hspace*{1mm} = c_d \, p(\runder) \, \E\Xi(\mcx_{1/q}) \, \E \bigl[ (\hat{R}'+\hat{S}')^d - (2\rover)^d \bigr].
\end{align*}
The inequality in the second last line is typically strict, due to boundary effects for $\mcx$. The last equality uses $\E \Xi_{\runder,q} (\mcx) = p(\runder) \, \E \Xi (\tau_q^{-1}(\mcx))$, which is readily checked by using the representation~\eqref{eq: percograph transformed vertices} and conditioning on $\Xi$. 

Using that $\hat{R}', \hat{S}'$ are independent and $\hat{R}', \hat{S}' \geq t$ as well as $a^d - b^d = (a-b) \sum_{l=0}^{d-1} a^{l} b^{d-1-l} \leq d (a-b) a^{d-1}$ for $a \geq b$, we obtain furthermore
\begin{align*}
  \E \bigl[ (\hat{R}'+\hat{S}')^d - (2t)^d \bigr]
  &\leq d\, \E \bigl[(\hat{R}'+\hat{S}'-2t) (\hat{R}'+\hat{S}')^{d-1} \bigr] \\
  &= 2d \, \E \bigl[(\hat{R}'-t) (\hat{R}'+\hat{S}')^{d-1} \bigr] \\
  &= 2d \sum_{j=0}^{d-1} \binom{d-1}{j} \E \big[(\hat{R}'-t\big) (\hat{R}')^j \big] \, \E \big[(\hat{S}')^{d-1-j} \bigr] \\
  &\leq d \hbit 2^{d}\, \E \big[(\hat{R}'-t\big) (\hat{R}')^{d-1}\big] \\
  &= d \hbit 2^{d}\, q \, \E \big[(\hat{R}-\rover\big) (\hat{R})^{d-1} \bigm| R \geq \runder \big],
\end{align*}
where the last inequality follows by $\hat{S} \eqinlaw \hat{R}$ and the FKG-inequality since the functions $x\mapsto (x-t)^{+} x^j$ and $x\mapsto x^{d-1-j}$ are both monotonically increasing. Putting everything together yields the claim. 
\end{proof}

In what follows we apply Theorem~\ref{thm: thinning application} to obtain a quantitative limit result for a thinned and rescaled Boolean model with centres given by a determinantal point process and a more concrete transformation rule for the radii $\hat{R}_i$.

Before we state the result in Corollary~\ref{cor: boolean_vs_grg}, we briefly recall determinantal point processes. A point process~$\Xi$ is said to have correlation functions $\rho^{(n)}:\mcx^n\to\R_+$ if 
\begin{align*}
    \E\bigg(\sum_{\{x_1,\ldots,x_n\} \subset\Xi}h(x_1,\ldots,x_n)\bigg) = \int_{\mcx}\ldots \int_{\mcx}h(x_1,\ldots,x_n) \rho^{(n)}(x_1,\ldots,x_n)\alpha(\diff x_1)\ldots \alpha(\diff x_n)
\end{align*}
for all measurable functions $h:\mcx^n\to\R_+$ and $n\in \N$.

Then a determinantal point process is a point process with correlation functions given by the determinant of a given kernel ${K:\R^d\times\R^d \to \C}$. 
\begin{definition}[Definition 2.1 \cite{georgii2005}]
   Let $K:\R^d\times\R^d \to \C$ be the kernel of a bounded positive definite Hermitian integral operator $\mathcal{K}$ on $L^2(\R^d,\text{Leb}^d)$. Then we call a point process \textit{determinantal point process (DPP) with correlation kernel $K$} if its correlation functions~$\rho^{(n)}$ are given by 
   $$\rho^{(n)}(x_1,\ldots,x_n) = \text{det}(K(x_i,x_j)_{i,j\in[n]}).$$
   for any $x_1,\ldots,x_n\in \mcx$ and $n\in\N$.
\end{definition}
By \textcite[Theorem~2.3]{lavancier2015} such a DPP exists if additionally to the above assumptions the spectrum of $\mathcal{K}$ is contained in $[0,1]$. In the following, we restrict ourselves to stationary kernel $K$, i.e.\ $K(x,y) = K(x-y)$ for $x,y\in\R^d$, and denote the intensity of a stationary DPP by $\mu = K(x,x)$ for $x\in\R^d$. Furthermore, we assume that $\mathcal{K}$ is of local trace class with spectrum in $[0,1)$ and remark that this corresponds to Hypothesis (H) in \textcite{georgii2005}. In particular, by \textcite[Theorem~3.1]{georgii2005} any DPP that fulfils the above assumptions and condition~\eqref{eq:condsigma} has a conditional intensity. We refer to \textcite{georgii2005} and \textcite{lavancier2015} for a more thorough introduction to determinantal point processes including a discussion of the assumptions made above.

\begin{corollary} \label{cor: boolean_vs_grg}
Consider a determinantal point process on $\R^d$ with kernel $K$ fulfilling the above assumptions and intensity $\mu = K(x,x)$. Let $\Xi$ be the restriction of the point process to $\mcx_{1/q}$. For a strictly increasing function $\psi$ with $\psi(0)=0$, consider the edge process where $\runder=\rover$ satisfies $\Prob(R \geq r_{*}) > 0$ and $\hat{R}_i = r_{*} + \psi(R_i - r_*)$. Denote by $P_{\rstarq} \otimes Q_{\rstarq}$ the distribution of the percolation graph $(\Xi_{\rstarq}, \Sigma_{\rstarq})$ of the Boolean model based on $\hat{R}_i$ after thinning and rescaling. \\
Let furthermore $\lambda = \lambda_{\rstarq} = \mu \tfrac{p(r_*)}{q}$, $t = q^{1/d}r_*$. Denote by $P^{\lambda}$ the homogeneous Poisson process distribution with intensity $\lambda$ and set $\kappa_t(x,y) = \1_{\{\norm{x-y} \leq 2t\}}$.\\
Then
\begin{align*}
W_{\mathbb{G},i} \bigl(&P_{\rstarq} \otimes Q_{\rstarq} \hbit , \, P^{\lambda} \otimes Q^{\kappa_{t}}\bigr) \\[1mm]
&\leq 2 c_V(\lambda_{\rstarq}) \,\lambda_{\rstarq} \abs{\mcx} \frac{q}{1-q} \\[-0.5mm]
&\hspace*{6mm} {} + c_E(\lambda_{\rstarq}) \, \lambda_{\rstarq}\, c_d\, \mu \abs{\mcx}
    d \, 2^{d} \E \bigl[ \psi(R-r_*) \bigl(r_* + \psi(R-r_*) \bigr)^{d-1} \, \1\{R \geq r_*\} \bigr] \\[1mm]
&\leq \mathrm{const} \Bigl(\min \bigl\{ \lambda_{\rstarq} \abs{\mcx}, 1 + \log^{+}(\lambda_{\rstarq} \abs{\mcx}) \bigr\} \frac{q}{1-q}  \\[-0.5mm]
&\hspace*{6mm} {} + \min \bigl\{ \lambda_{\rstarq} \abs{\mcx}, 1 \bigr\} \mu \E \bigl[ \psi(R-r_*) \bigl(r_* + \psi(R-r_*) \bigr)^{d-1} \, \1\{R \geq r_*\} \bigr] \Bigr),
\end{align*}
where $c_d = \frac{\pi^{d/2}}{\Gamma(d/2 +1)}$ is the volume of the unit ball in $\R^d$ and $c_V(\lambda)$, $c_E(\lambda)$ are defined in Equations~\eqref{eq: SteinPoissonV} and~\eqref{eq: SteinPoissonE}.
\end{corollary}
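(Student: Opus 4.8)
The plan is to specialise Theorem~\ref{thm: thinning application} to the present determinantal setting and then simplify the two surviving error terms. Exactly as in the proof of Theorem~\ref{thm: thinning application}, the third term of Theorem~\ref{thm: Poisson approx in Wasserstein} vanishes here, because in a percolation graph the presence or absence of an edge is completely determined by the two incident balls, so no additional vertex can influence it. Thus only a vertex term and an edge term remain, and it suffices to match each with the corresponding summand of the claimed first inequality; the second (``$\mathrm{const}$'') inequality then follows by inserting the definitions~\eqref{eq: SteinPoissonV} and~\eqref{eq: SteinPoissonE} with $\Lambda=\lambda_{\rstarq}\abs{\mcx}$ and the elementary estimates $c_V(\lambda_{\rstarq})\lambda_{\rstarq}\abs{\mcx}\le\mathrm{const}\cdot\min\{\Lambda,\,1+\log^{+}\Lambda\}$ and $c_E(\lambda_{\rstarq})\lambda_{\rstarq}\abs{\mcx}\le\mathrm{const}\cdot\min\{\Lambda,\,1\}$, with the constants absorbing $C_V$, $C_E$, $c_d$, $d$ and $2^{d}$.

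The edge term is a plain substitution. With $\runder=\rover=r_*$ and $\hat R_i=r_*+\psi(R_i-r_*)$ we get $\hat R-\rover=\psi(R-r_*)$ and, on $\{R\ge r_*\}$, $\hat R=r_*+\psi(R-r_*)$, so that $\E[(\hat R-\rover)\hat R^{d-1}\mvert R\ge r_*]=\E[\psi(R-r_*)(r_*+\psi(R-r_*))^{d-1}\mvert R\ge r_*]$. Since $\Xi$ is the restriction of a stationary DPP of intensity $\mu$ to $\mcx_{1/q}=\tau_q^{-1}(\mcx)$, we have $\E\Xi(\mcx_{1/q})=\mu\,\text{Leb}^{d}(\mcx_{1/q})=\mu q^{-1}\abs{\mcx}$, hence $p(r_*)\,q\,\E\Xi(\mcx_{1/q})=p(r_*)\mu\abs{\mcx}$; combining this with $p(r_*)\,\E[\,\cdot\mvert R\ge r_*]=\E[\,\cdot\,\1\{R\ge r_*\}]$ turns the edge term of Theorem~\ref{thm: thinning application} into exactly $c_E(\lambda_{\rstarq})\lambda_{\rstarq}c_d\mu\abs{\mcx}\,d\,2^{d}\,\E[\psi(R-r_*)(r_*+\psi(R-r_*))^{d-1}\1\{R\ge r_*\}]$.

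For the vertex term I would exploit the determinantal structure. By Lemma~\ref{le: kernel of thinned and rescaled process} the vertex term equals $c_V(\lambda_{\rstarq})\,\E\bignorm{\lambda_{\rstarq}\,\text{Leb}^{d}(\diff x)-L_{\rstarq}(\diff x\mvert\Xi_{\rstarq})}_{\mathrm{TV}}$, so it remains to show $\E\int_{\mcx}\bigabs{\lambda_{\rstarq}-\lambda^{*}_{\rstarq}(x\mvert\Xi_{\rstarq})}\,\diff x\le 2\lambda_{\rstarq}\abs{\mcx}\,\tfrac{q}{1-q}$, where $\lambda^{*}_{\rstarq}$ is the Papangelou conditional intensity of $\Xi_{\rstarq}$. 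First, $\Xi_{\rstarq}$ is again a DPP on $\mcx$: an independent $p(r_*)$-thinning multiplies the correlation kernel by $p(r_*)$, and the push-forward under the contraction $\tau_q$ acts as a unitary conjugation on $L^{2}$, so $\Xi_{\rstarq}$ has constant intensity $\lambda_{\rstarq}$ and correlation operator $\mathcal K_{\rstarq}$ with $\|\mathcal K_{\rstarq}\|=p(r_*)\|\mathcal K\vert_{\mcx_{1/q}}\|\le q$ (using the spectral assumption $\|\mathcal K\|\le 1$ and $\lambda_{\rstarq}q=\mu p(r_*)$). Second, since the Janossy density of $\Xi_{\rstarq}$ on $\mcx$ is proportional to $\det(J_{\rstarq}(x_i,x_j))$, where $J_{\rstarq}$ is the kernel of $\mathcal J_{\rstarq}:=\mathcal K_{\rstarq}(\mathrm{Id}-\mathcal K_{\rstarq})^{-1}$, a Schur-complement identity for the ratio of consecutive Janossy densities gives $\lambda^{*}_{\rstarq}(x\mvert\xi)\le J_{\rstarq}(x,x)$ for every $\xi$, while $\E\lambda^{*}_{\rstarq}(x\mvert\Xi_{\rstarq})=\rho^{(1)}(x)=\lambda_{\rstarq}$ by the GNZ equation. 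Third, $J_{\rstarq}(x,x)-\lambda_{\rstarq}=\sum_{k\ge 2}\mathcal K_{\rstarq}^{k}(x,x)$ and $\int_{\mcx}\mathcal K_{\rstarq}^{k}(x,x)\,\diff x=\mathrm{tr}(\mathcal K_{\rstarq}^{k})\le\|\mathcal K_{\rstarq}\|^{k-1}\mathrm{tr}(\mathcal K_{\rstarq})=\|\mathcal K_{\rstarq}\|^{k-1}\lambda_{\rstarq}\abs{\mcx}$. Combining these and using that $\E\bigabs{Z-\lambda}=2\,\E(Z-\lambda)^{+}$ whenever $\E Z=\lambda$,
\begin{align*}
  \E\int_{\mcx}\bigabs{\lambda_{\rstarq}-\lambda^{*}_{\rstarq}(x\mvert\Xi_{\rstarq})}\,\diff x
  &=2\,\E\int_{\mcx}\bigl(\lambda^{*}_{\rstarq}(x\mvert\Xi_{\rstarq})-\lambda_{\rstarq}\bigr)^{+}\diff x
  \le 2\int_{\mcx}\bigl(J_{\rstarq}(x,x)-\lambda_{\rstarq}\bigr)\diff x\\
  &\le 2\lambda_{\rstarq}\abs{\mcx}\sum_{k\ge 2}\|\mathcal K_{\rstarq}\|^{k-1}
  =2\lambda_{\rstarq}\abs{\mcx}\,\frac{\|\mathcal K_{\rstarq}\|}{1-\|\mathcal K_{\rstarq}\|}
  \le 2\lambda_{\rstarq}\abs{\mcx}\,\frac{q}{1-q},
\end{align*}
which, multiplied by $c_V(\lambda_{\rstarq})$, is the first summand of the claimed bound.

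The main obstacle is this vertex estimate and, inside it, the determinantal bookkeeping: checking carefully that $p(r_*)$-thinning followed by the contraction $\tau_q$ keeps the process determinantal with the stated correlation operator, deriving the pointwise domination $\lambda^{*}_{\rstarq}(x\mvert\xi)\le J_{\rstarq}(x,x)$ from the Janossy-density (``$L$-ensemble'') representation — the one place where the determinantal nature is genuinely used — and bounding $\|\mathcal K_{\rstarq}\|$ by $q$. One should also record that $\Xi_{\rstarq}$ satisfies Condition~\eqref{eq:condsigma}, so that $L_{\rstarq}$ exists; this holds because a DPP on a bounded window with correlation eigenvalues $<1$ has a strictly positive density with respect to a Poisson process. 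The edge-term substitution and the passage to the ``$\mathrm{const}$'' bound are then routine.
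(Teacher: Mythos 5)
Your reduction to Theorem~\ref{thm: thinning application}, the edge-term substitution ($q\,\E\Xi(\mcx_{1/q})=\mu\abs{\mcx}$, $p(r_*)\E[\,\cdot\mvert R\ge r_*]=\E[\,\cdot\,\1\{R\ge r_*\}]$) and the passage to the final ``const'' bound are correct and coincide with the paper. The problem lies in the vertex term, where you take a genuinely different route: you treat $\Xi_{\rstarq}$ itself as a DPP and dominate its Papangelou intensity by the diagonal of its own interaction operator. That bookkeeping is fine up to one decisive step, namely the claim $\bignorm{\mathcal{K}_{\rstarq}}=p(r_*)\bignorm{\mathcal{K}\vert_{\mcx_{1/q}}}\le q$. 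The thinned-and-rescaled process has kernel $\tfrac{p(r_*)}{q}K_q(x,y)$ with $K_q(x,y)=K(\tau_q^{-1}x,\tau_q^{-1}y)$, and its eigenvalues are $p(r_*)\kappa_n$: the factor $q$ picked up by the rescaled eigenfunctions cancels against the Jacobian $1/q$. The spectral hypothesis only gives $\kappa_n<1$, so all you get is $\bignorm{\mathcal{K}_{\rstarq}}\le p(r_*)$; the identity $\lambda_{\rstarq}q=\mu\, p(r_*)$ relates intensities, not operator norms, and nothing in the corollary forces $p(r_*)\le q$. Indeed, the regime highlighted as most interesting in Example~\ref{ex: boolean} (scaling of higher order than thinning, $b>a$) has $q\ll p(r_*)$. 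Moreover, this is not merely a missing justification within your chain: after the step
\begin{equation*}
\E\int_{\mcx}\bigabs{\lambda_{\rstarq}-\lambda^{*}_{\rstarq}(x\mvert\Xi_{\rstarq})}\,\diff x\;\le\;2\int_{\mcx}\bigl(J_{\rstarq}(x,x)-\lambda_{\rstarq}\bigr)\diff x\;=\;2\sum_{n}\frac{(p(r_*)\kappa_n)^2}{1-p(r_*)\kappa_n},
\end{equation*}
the right-hand side can already exceed the target $2\tfrac{q}{1-q}\lambda_{\rstarq}\abs{\mcx}=2\tfrac{q}{1-q}\sum_n p(r_*)\kappa_n$ (take one dominant eigenvalue close to $1$, $p(r_*)$ close to $1$, $q$ small), so the domination $\lambda^{*}_{\rstarq}(x\mvert\xi)\le J_{\rstarq}(x,x)$ alone cannot deliver the stated inequality; your route yields at best the factor $p(r_*)\sup_n\kappa_n\big/\bigl(1-p(r_*)\sup_n\kappa_n\bigr)$, which matches the claim only under the default choice $q=p(r_*)$ or, more generally, when $p(r_*)\sup_n\kappa_n\le q$.

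The paper's argument avoids this by never passing to the interaction operator of the \emph{thinned} process. Via Lemma~\ref{le: kernel of thinned and rescaled process} the Papangelou kernel of $\Xi_{\rstarq}$ is $\tfrac{p(r_*)}{q}\,\E\bigl(\lambda(\tau_q^{-1}x\mvert\Xi)\bigm|\Xi_{\rstarq}\bigr)\diff x$ with $\lambda$ the conditional intensity of the \emph{original} DPP on $\mcx_{1/q}$; repulsiveness ($\lambda(\cdot\mvert\xi)\le\lambda(\cdot\mvert\emptyset)$, \textcite[Theorem~3.1]{georgii2005}) together with $\E\lambda(\tau_q^{-1}x\mvert\Xi)=\mu$ from the GNZ equation reduces the vertex term to $2\tfrac{p(r_*)}{q}\int_{\mcx}\bigl(\lambda(\tau_q^{-1}x\mvert\emptyset)-\mu\bigr)\diff x$, which is then evaluated through the interaction operator associated with the rescaled kernel $K_q$, whose eigenvalues $\kappa_{n,q}=q\kappa_n\le q$ are what produce the factor $q/(1-q)$ (the rescaled eigenfunctions $\varphi_n\circ\tau_q^{-1}$ have $L^2(\mcx)$-norm $\sqrt{q}$). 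In short: in the paper the contraction, not the thinning probability, controls the relevant eigenvalues, whereas in your argument the eigenvalues are controlled by $p(r_*)$, which is the wrong parameter unless $p(r_*)\le q$. Your side remarks (uniqueness of the Papangelou kernel, so $L_{\rstarq}$ agrees with the intrinsic conditional intensity of the thinned DPP, and the verification of Condition~\eqref{eq:condsigma}) are fine; the operator-norm step is the one genuine gap, and it is exactly where the $q/(1-q)$ rate has to come from.
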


\begin{proof}
By Theorem~\ref{thm: thinning application}, we have
\begin{align} \label{eq: proof thinning with weibull upper bound}
    W_{\mathbb{G},i} \bigl(P_{\rstarq} \otimes Q_{\rstarq} \hbit , \, P^{\lambda} \otimes Q^{\kappa_{t}}\bigr)
    &\leq c_V(\lambda)\, \E\bigl(\bignorm{\lambda\,\text{Leb}^{d}(\diff x) - p(r_*) \hbit \E \bigl( L(\tau_{q}^{-1}(\cdot) \mvert \Xi\tau_q^{-1}) \bigm| \Xi_{\rstarq} \bigr) }_{\text{TV}} \bigr) \notag\\[0.5mm]
    &\hspace{6mm}  {}+c_E(\lambda) \, \lambda\, c_d\, p(r_*) \, q \, \E\Xi(\mcx_{1/q}) \,
    d \, 2^d \E\bigl[(\hat{R}-r_*) \hat{R}^{d-1} \bigm| R \geq r_* \bigr].
\end{align}

Evaluating the first term, we obtain
\begin{align*}
     \E\bigl(&\bignorm{\lambda\,\text{Leb}^{d}(\diff x)- p(r_*) \,\E(L(\tau_{q}^{-1}(\diff x)\mvert \Xi\tau_{q}^{-1})\mvert \Xi_{\rstarq})}_{\text{TV}}\bigr) \\
     &= \E\bigl(\bignorm{\lambda\,\text{Leb}^{d}(\diff x)- \tfrac{p(r_*)}{q} \,\E(\lambda(\tau_{q}^{-1}(x)\mvert \Xi\tau_{q}^{-1})\mvert \Xi_{\rstarq})\,\text{Leb}^{d}(\diff x) }_{\text{TV}}\bigr) \\
     &= \tfrac{p(r_*)}{q} \int_{\mcx} \E\bigl(\bigabs{\mu -\E(\lambda(\tau_{q}^{-1}(x)\mvert \Xi)\mvert \Xi_{\rstarq})} \bigr) \, \,\text{Leb}^{d}(\diff x)\\
     &\leq \tfrac{p(r_*)}{q} \int_{\mcx} \E\bigl(\bigabs{ \lambda(\tau_q^{-1}(x)\mvert \emptyset)-\mu } + \bigabs{\lambda(\tau_q^{-1}(x)\mvert \emptyset)-\E(\lambda(\tau_{q}^{-1}(x)\mvert \Xi)\mvert \Xi_{\rstarq})} \bigr) \, \text{Leb}^{d}(\diff x).
\end{align*}

By \textcite[Theorem~3.1]{georgii2005} the process $\Xi$ on $\mcx_{1/q}$ is repulsive, i.e.\ its corresponding conditional intensity $\lambda$ fulfils
$ \lambda(x\mvert \eta)\leq \lambda(x\mvert \xi) $ for any $\xi\subset \eta$ and $x\in \mcx_{1/q}$ and thus $ \E(\lambda(\tau_{q}^{-1}(x)\mvert \Xi\tau_q^{-1})\mvert \Xi_{\rstarq}) \leq \lambda(\tau_q^{-1}(x)\mvert \emptyset)$. Furthermore, the stationarity of $K$ implies 
\begin{align}\label{eq: proof DPP intensity}
 \mu = K(x,x) =
K(\tau_{q}^{-1}(x),\tau_{q}^{-1}(x)) = \rho(\tau_{q}^{-1}(x)) = \E(\lambda(\tau_{q}^{-1}(x)\mvert \Xi)) \leq \lambda(\tau_q^{-1}(x)\mvert \emptyset),   
\end{align}
where $\rho = \rho^{(1)}$ is the correlation function of order~$1$ and the last equality follows from the GNZ equation. Thus, we can omit the absolute values in the integrand above and, using $\mu = \E(\lambda(\tau_{q}^{-1}(x)\mvert \Xi))$ from Equation~\eqref{eq: proof DPP intensity}, obtain for the whole expression 
\begin{align*}
    2 \tfrac{p(r_*)}{q} \int_{\mcx} \lambda(\tau_q^{-1}(x)\mvert \emptyset) - \mu \,\text{Leb}^{d}(\diff x)
    &= 2 \tfrac{p(r_*)}{q} \int_{\mcx} \lambda(x\mvert \emptyset) - \mu \,\text{Leb}^{d}(\diff x).
\end{align*}
Here we also used that $\lambda(\cdot \mvert \emptyset) $ is constant as the stationarity of $K$ implies (strong) stationarity of the corresponding DPP, see \textcite[Chapter~3]{lavancier2015}.

We investigate the correlation kernel $K$ and its interaction operator $J := K(I-K)^{-1}$, where $I$ is the identity operator, in more detail. Define $K_q(x,y) := K(\tau_q^{-1}(x),\tau_q^{-1}(y))$ and note that by stationarity $\mu = K(x,x) = K_q(x,x)$ for any $x\in\mcx$. In particular, $J(x,x) = J_q(x,x)$, where $J_q:= K_q(I-K_q)^{-1}$ is the interaction operator corresponding to $K_q$. Furthermore, by \textcite[Theorem~3.1]{georgii2005}, the conditional intensity fulfils $\lambda(x\mvert \emptyset) = J(x,x)$ for all $x\in\mcx_{1/q}$. This implies
\begin{align*}
    2 \tfrac{p(r*)}{q} \int_{\mcx} \lambda(x\mvert \emptyset) - \mu \,\text{Leb}^{d}(\diff x) 
    &= 2 \tfrac{p(r*)}{q} \int_{\mcx} J(x,x) - K(x,x) \,\text{Leb}^{d}(\diff x)\\
    &= 2 \tfrac{p(r*)}{q} \int_{\mcx} J_q(x,x) - K_q(x,x) \,\text{Leb}^{d}(\diff x).
\end{align*}

To bound the right hand-side of this expression, we consider suitable representations of $J_q$ and $K_q$ that can be constructed from a representation of $K$. More precisely, by Mercer's theorem, 
we find a spectral representation for $K$ (on $\mcx_{1/q}$), i.e.
$$K(x,y) = \sum_{k=1}^{\infty} \kappa_n \varphi_n(x) \overline{\varphi_n(y)},$$
where $\kappa_n$ are the eigenvalues of the operator $K$ (on $\mcx_{1/q}$) and $\varphi_n$ form an orthonormal basis (in $L^2(\mcx_{1/q})$) of corresponding eigenfunctions.  
Using this representation we can deduce a representation for $K_q$ (on $\mcx$) by 
\begin{align*}
    K_q(x,y) = K(\tau_q^{-1}(x),\tau_q^{-1}(y)) 
    &= \sum_{k=1}^{\infty} \kappa_n\norm{\varphi_n\circ \tau_q^{-1}}^2 \, \frac{\varphi_n\circ\tau_q^{-1}(x)}{\norm{\varphi_n\circ \tau_q^{-1}}} \,\overline{\frac{\varphi_n\circ\tau_q^{-1}(y)}{\norm{\varphi_n\circ \tau_q^{-1}}}}\\
    &= \sum_{k=1}^{\infty} \kappa_{n,q}  \varphi_{n,q}(x) \overline{\varphi_{n,q}(y)},
\end{align*}
where $\varphi_{n,q}(x) := \frac{\varphi_n\circ\tau_q^{-1}(x)}{\sqrt{q}}$ and $\kappa_{n,q} := \kappa_n q$. The last equation follows as $(\varphi_n)_n$ is an orthonormal basis and thus 
$$\norm{\varphi_n\circ \tau_q^{-1}}^2 = \int_{\mcx} \abs{\varphi_n\circ \tau_q^{-1}(x)}^2 \diff x = q \int_{\mcx_{1/q}} \abs{\varphi_n(x)}^2 \diff x = q. $$

Then, as $\mathcal{K}$ is of local trace class with spectrum contained in $[0,1)$, we can represent the corresponding interaction operator $J_q$ by 
$$J_q(x,y) = \sum_{k=1}^{\infty} \frac{\kappa_{n,q}}{1-\kappa_{n,q}}  \varphi_{n,q}(x) \overline{\varphi_{n,q}(y)},$$
see \textcite[Chapter~3]{georgii2005}. 

An application of these representations to the above integral expression yields
\begin{align*}
    2 \tfrac{p(r_*)}{q} \int_{\mcx} J_q(x,x) - K_q(x,x) \,\text{Leb}^{d}(\diff x)
    &= 2 \tfrac{p(r_*)}{q} \int_{\mcx} \sum_{k=1}^{\infty} \frac{\kappa_{n,q}^2}{1-\kappa_{n,q}}  \abs{\varphi_{n,q}(x)}^2 \, \text{Leb}^{d}(\diff x)\\
    &\leq 2 \tfrac{p(r_*)}{1-q} \int_{\mcx} \sum_{k=1}^{\infty} \kappa_{n,q}  \abs{\varphi_{n,q}(x)}^2 \, \text{Leb}^{d}(\diff x)\\
    &= 2 \tfrac{p(r_*)}{1-q} \int_{\mcx} K_q(x,x) \, \text{Leb}^{d}(\diff x)\\
    &= 2 \tfrac{q}{1-q} \lambda_{\rstarq}\, \abs{\mcx},
\end{align*}
where the inequality follows as by assumption on the spectrum of $\mathcal{K}$ we have $\kappa_n\leq 1$ and thus $\kappa_{n,q} = q \kappa_n\leq q$. We remark that a similar result for $p=q$ is obtained in \textcite[Theorem~5.12]{decreusefond2018} and \textcite[Chapter~8]{torrisi2017}. Finally, an application of the bounds in Theorem~\ref{thm: Poisson approx in Wasserstein} with $\Lambda = \lambda_{\rstarq} $ yields the claim for the vertex terms. 

For the second term of the upper bound in \eqref{eq: proof thinning with weibull upper bound}, we note that $q \, \E(\Xi(\mcx_{1/q})) = \mu \abs{\mcx}$. Furthermore, 
\begin{align*}
  p(r_*) \, \E\bigl[(\hat{R}-r_*) \hat{R}^{d-1} \bigm| R \geq r_{*} \bigr] 
  &= \E \bigl[ \psi(R-r_*) \bigl(r_* + \psi(R-r_*) \bigr)^{d-1} \, \1\{R \geq r_*\} \bigr].
\end{align*}
Thus, the second term is bounded by
$$c_E(\lambda_{\rstarq}) \, \lambda_{\rstarq}\, c_d\, \mu \abs{\mcx}
    d \, 2^{d} \E \bigl[ \psi(R-r_*) \bigl(r_* + \psi(R-r_*) \bigr)^{d-1} \, \1\{R \geq r_*\} \bigr].$$

\end{proof}

\begin{example} \label{ex: boolean}
  We consider a special case within the setting of Corollary~\ref{cor: boolean_vs_grg}, writing now $r$ rather then $r_*$ for the cutoff (and lower bound) of the radii. By $f(r) \asymp g(r)$ as $r \to \infty$ for functions $f,g \colon (0,\infty) \to (0,\infty)$, we mean that there is an $r_0>0$ and positive constants $\kappa_1,\kappa_2$ such that $\kappa_1 \leq f(r)/g(r) \leq \kappa_2$ for all $r \geq r_0$.\\
  Suppose that the radius distribution satisfies $p(r) = \Prob(R \geq r) \asymp r^{-a}$ and the contraction rate is $q = q(r) \asymp r^{-b}$ for some $a,b\in\R_+$. Let furthermore $\psi(s) = \psi_r(s) = s^{\gamma}/r^{\delta}$ for some $\gamma \in (0,1]$ and $\delta \in [0,\infty)$. Then, using $(x+y)^{d-1} \leq 2^{d-1} (x^{d-1}+y^{d-1})$ for $x,y\in\R_+$, we have
  \begin{align*}
    2^{-d+1} \E \bigl[ &\psi(R-r) \bigl(r + \psi(R-r) \bigr)^{d-1} \, \1\{R \geq r\} \bigr] \\
    &\leq \E \bigl[ \psi(R-r) r^{d-1} \, \1\{R \geq r\} \bigr] + \E \bigl[ \psi(R-r)^{d} \, \1\{R \geq r\} \bigr].
  \end{align*}
  For $k \in \N$, we obtain 
  \begin{align*}
    \E \bigl[ \psi(R-r)^{k} \, \1\{R \geq r\} \bigr] &= \E \bigl[ (R-r)^{\gamma k} r^{-\delta k} \, \1\{R \geq r\} \bigr] \\
    &= \int_{0}^{\infty} \Prob \bigl( (R-r)^{\gamma k} r^{-\delta k} \, \1\{R \geq r\} > s \bigr) \, ds \\
     &= \int_{0}^{\infty} \Prob \bigl( R > r^{\delta/\gamma} s^{1/(\gamma k)} + r \bigr) \, ds \\
     &\asymp \int_{0}^{\infty} (r^{\delta k} s + r^{\gamma k})^{-a/(\gamma k)} \, ds \\
      &= \frac{r^{-\delta k}}{a/(\gamma k) - 1} \Bigl[ - (r^{\delta k} s + r^{\gamma k})^{-a/(\gamma k)+1} \Bigr]_{s=0}^{\infty} \\
      &= \frac{\gamma k}{a-\gamma k} \, r^{-a-\delta k + \gamma k}
  \end{align*}
  if $a > \gamma k$.\\ 
  In total, we thus have
  \begin{align*}
W_{\mathbb{G},i} \bigl(P_{r,q} \otimes Q_{\rstarq} \hbit , \, P^{\lambda} \otimes Q^{\kappa_{t}}\bigr) \,=\, \mathcal{O} \Bigl( &\min\bigl\{r^{-a+b} \abs{\mcx}, 1+\log^+(r^{-a+b}\abs{\mcx})\bigr\} \, r^{-b} \\ 
&{}+ \min\bigl\{r^{-a+b}\abs{\mcx},1\} \, r^{-a + \max\{\gamma-\delta+d-1, (\gamma-\delta)d \}} \Bigr)
   \end{align*}
if $\gamma < a/d$. \\
Since distances between points in the domain of the contraction are of the order of magnitude~$r^{a/d}$, our edge construction based on comparing distances with a cutoff of order of magnitude $r$ converges to a graph with empty edge set in the limit as $r \to \infty$ if $a > d$ and to a complete graph if $a < d$. \\
The most interesting case is therefore $a=d$, which requires $\gamma < 1$ for a finite upper bound. In this case, the bound simplifies to 
\begin{align*}
W_{\mathbb{G},i} \bigl(P_{r,q} \otimes Q_{\rstarq} \hbit , \, P^{\lambda} \otimes Q^{\kappa_{t}}\bigr) \,=\, \mathcal{O} \Bigl( &\min\bigl\{r^{-d+b} \abs{\mcx}, 1+\log^+(r^{-d+b}\abs{\mcx})\bigr\} \, r^{-b} \\ 
&{}+ \min\bigl\{r^{-d+b}\abs{\mcx},1\} \, r^{-(1-\gamma+\delta)} \Bigr).
\end{align*}
It is worthwhile noting that the improved Stein factors obtained in Subsection~\ref{ssec: bounding stein factors} make a considerable difference if $\abs{\mcx}$ is large or the scaling is of higher order than the thinning, i.e.\ $b > a = d$.
\end{example}
\begin{remark}
  If any of the Wasserstein distances in Corollary~\ref{cor: boolean_vs_grg} and Example~\ref{ex: boolean} go towards~$0$ as $r \to \infty$ while the approximating $P^{\lambda} \otimes Q^{\kappa_t}$ distribution is constant in $r_*=r$ (or converges itself weakly to a fixed distribution), we obtain weak convergence of $P_{\rstarq} \otimes Q_{\rstarq}$. This is due to the fact that weak convergence of point process distributions is determined locally, in the sense that $\Xi_n \inlawto \Xi$ for point processes on $\R^d$ (say) if and only if $\Xi_n\vert_{\mcx} \inlawto \Xi\vert_{\mcx}$ for every compact $\mcx \subset \R^d$, as can be seen from any of the characterisations in Theorem~4.11 of \textcite{Kallenberg2017rm}.\\
  While we cannot define a natural (i.e.\ translation invariant) GOSPA metric for infinite random graphs on the whole of $\R^d$, we can resort to the standard construction of using the metric $\overline{d}_{\G,i}$ given by
  \begin{equation*}
      \overline{d}_{\G,i}((\xi,\sigma), (\eta,\tau)) = \int_{\R_{+}} d_{\G,i}\bigl((\xi,\sigma)\vert_{\B(0,t) \times \B(0,t)^{\atwo}}, (\eta,\tau)\vert_{\B(0,t) \times \B(0,t)^{\atwo}}\bigr) \, e^{-t} \, dt
  \end{equation*}
  for graphs $(\xi,\sigma)$, $(\eta,\tau)$ on $\R^d$ with only finite degrees. In the Wasserstein metric $\overline{W}_{\G,i}$ based on $\overline{d}_{\G,i}$, we obtain the same rates as above without the $\abs{\mcx}$ terms, which go into constants of the form $\pi \int_0^{\infty} t^d e^{-t} \, dt$ or $\pi d \int_0^{\infty} \log(t) e^{-t} \, dt$.\\
  Thus, for example, if $a=b=d$ and $\delta=0$ in Example~\ref{ex: boolean}, we obtain
  \begin{equation*}
     P_{r,q} \otimes Q_{r,q} \longrightarrow P^{\lambda} \otimes Q^{\kappa_{t}} \quad \text{ weakly as $r \to \infty$}
  \end{equation*}
  at rate
  \begin{equation*}
    \overline{W}_{\G,i}\bigl(P_{r,q} \otimes Q_{r,q} \hbit , \, P^{\lambda} \otimes Q^{\kappa_{t}}\bigr) = \mathcal{O}\bigl(r^{-(1-\gamma)}\bigr).
  \end{equation*}
  Note that Wasserstein convergence \emph{characterises} weak convergence since $d_{\G,i}$ and hence $\overline{d}_{\G,i}$ is bounded.
\end{remark}

\subsection{Discrete Graphs}

In the following, we compare a generalised random geometric graph on the compact set $\mcx \subset \mathbb{R}^d$ to a discretised graph whose vertices lie in a finite subset $\Lambda = \Lambda_n = \{y_i\}_{i\in [n]}\subset \mcx$.
Consider a partition $\X = \X_{\Lambda} = (\mcx_i)_{i \in [n]}$ of the space $\mcx$ that satisfies $y_i\in \mcx_i$ for every $i\in[n]$ and let $r_V = \max_{i\in [n]} \sup_{x\in \mcx_i} d_V(x,y_i)$ be the maximal radius of all cells in $\X$. Furthermore, denote by $\mfn_{\Lambda}$ the space of finite simple counting measures on $\Lambda$ (equipped with the natural $\sigma$-algebra given by its power set).

Given a random geometric graph $\mathrm{RGG}(\lambda,\kappa)$, we aim to construct an approximating discrete random graph. Starting with the vertices, we define a Gibbs process on $\Lambda$ as a point process that has a hereditary density $u_{\Lambda}:\mfn_{\Lambda}\to \R_+$ with respect to the distribution of a Poisson point process on $\Lambda$ with intensity measure $\alpha_{\Lambda}$ given by $\alpha_{\Lambda}(y_i) = \alpha(\mcx_i)$ for $i\in [n]$. By setting $\lambda_{\Lambda}(\cdot\mvert \xi) := u_{\Lambda}(\xi + \cdot)/u_{\Lambda}(\xi)$ we obtain the corresponding conditional intensity. For a Gibbs process on $\mcx$ with density $u$ and conditional intensity~$\lambda$, we use $u_{\Lambda} = u\vert_{\Lambda} \big/ \sum_{i \in [n]} u(y_i)$ for its discretised version, which is thus a Gibbs process on $\Lambda$ having conditional intensity~$\lambda_{\Lambda}$ given by $\lambda_{\Lambda} = \lambda\vert_{\mfn_{\Lambda}\times \Lambda}$. This corresponds to what \textcite[Chapter~4.3]{stucki2014} refer to as ``discrete analogon'' of a Gibbs process. To avoid problems with possible multi-points, we assume that the original (and thus the discretised) Gibbs process has zero density on point patterns with multi-points. Finally, we let the edges of the corresponding discretised graph be distributed according to $Q^{\kappa_{\Lambda}}$ for a given matrix $\kappa_{\Lambda}^n = (\kappa_{\Lambda}(y_i,y_j))_{i,j,\in [n]}$ of connection probabilities based on the centres of the cells $\mcx_i$.
Given a random geometric graph $(\Xi,\Sigma)\sim \mathrm{RGG}(\lambda,\kappa)$ on $\mcx$, this construction yields a corresponding discretised graph $(\Xi_{\Lambda},\Sigma_{\Lambda})\sim P^{\lambda_{\Lambda}}\otimes Q^{\kappa_{\Lambda}}$ on $\Lambda$.

Before we can state the result for the approximation of a random geometric graph by its discretised graph, we need to introduce an intermediate graph that is constructed from the discretised graph but has vertices that can lie anywhere in $\mcx$. 
More precisely, given the vertex process~$\Xi_{\Lambda}\sim P^{\lambda_{\Lambda}}$ on $\Lambda$, we construct the intermediate vertex process $\Xi_I$ by resampling each point of $\Xi_{\Lambda}$ uniformly in the corresponding cell~$\mcx_i$, i.e.\ 
$\Xi_I = \sum_{i=1}^n N_i \delta_{U_i}$ for $N_i = \Xi_{\Lambda}(\{y_i\}) \in \{0,1\}$ a.s.\ and independent $U_i\sim\alpha\vert_{\mcx_i}(\cdot)/\alpha(\mcx_i)$.  
By \textcite[Lemma~28]{stucki2014}, $\Xi_I$ is again a Gibbs process with conditional intensity $\lambda_I(x\mvert\xi) = \lambda_{\Lambda}(t(x)\mvert t(\xi)) = \lambda(t(x)\mvert t(\xi))$, where $t:\mcx\to\Lambda$ is given by $t(x) = y_i$ for the $i\in[n]$ with $x\in \mcx_i$ and $t(\xi):= \sum_{y\in\xi} \delta_{t(y)}$.
Finally, we let the distribution of the intermediate edge process be given by $Q^{\kappa_I}$ with $\kappa_I(x,x') := \kappa_{\Lambda}(t(x),t(x'))$ and thus obtain an intermediate graph $(\Xi_I,\Sigma_I)\sim P^{\lambda_I}\otimes Q^{\kappa_I}$ on $\mcx$.

We can now state the approximation result of a random geometric graph by its discretised graphs.  

\begin{theorem}[Approximation of RGG by discretised graphs] \label{thm: discretization application}
Let $(\Xi,\Sigma)$ be an $\mathrm{RGG}(\lambda,\kappa)$ such that $\Xi\sim P^{\lambda}$ fulfills \eqref{eq: stability cond}. Construct the corresponding discretised graph $(\Xi_{\Lambda},\Sigma_{\Lambda})\sim P^{\lambda_{\Lambda}}\otimes Q^{\kappa_{\Lambda}}$ and intermediate graph $(\Xi_I,\Sigma_I)\sim P^{\lambda_I}\otimes Q^{\kappa_I}$ as described above. Then
    \begin{align*}
        &W_{\G,i}(P^{\lambda}\otimes Q^{\kappa},P^{\lambda_{\Lambda}}\otimes Q^{\kappa_{\Lambda}}) \\
        &\leq r_V + C_i B^*\, \E\biggl(\int_{\mcx}\bigl\vert\lambda(x\mvert \Xi_I) -\lambda(t(x) \mvert t(\Xi_I))\bigr\vert \, \alpha(\diff x)\biggr)\\
    &\quad + \frac14 C_E\,\E\biggl(\int_{\mcx^2} \bigabs{\kappa(x,y) - \kappa_{\Lambda}(t(x),t(y))}\,  \lambda(x \mvert \Xi_I + \delta_y) \hbit \lambda(t(y)\vert t(\Xi_I)) \, \alpha^2(\diff (x,y))\biggr)
    \end{align*}
\end{theorem}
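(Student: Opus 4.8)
The plan is to exploit the intermediate graph $(\Xi_I,\Sigma_I)\sim P^{\lambda_I}\otimes Q^{\kappa_I}$ introduced before the theorem and split the distance by the triangle inequality for $W_{\G,i}$,
\begin{equation*}
W_{\G,i}(P^{\lambda}\otimes Q^{\kappa},P^{\lambda_{\Lambda}}\otimes Q^{\kappa_{\Lambda}})\;\leq\;W_{\G,i}(P^{\lambda}\otimes Q^{\kappa},P^{\lambda_{I}}\otimes Q^{\kappa_{I}})\;+\;W_{\G,i}(P^{\lambda_{I}}\otimes Q^{\kappa_{I}},P^{\lambda_{\Lambda}}\otimes Q^{\kappa_{\Lambda}}).
\end{equation*}
The second summand is handled by an explicit coupling and produces the leading term $r_V$; the first summand is handled by Theorem~\ref{thm: general approx result} (in its Wasserstein incarnation, $\mcf=\mcf_i$), applied with the $\mathrm{RGG}(\lambda,\kappa)$ as target and the intermediate graph as source.

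\textbf{The $r_V$ term.} For the second summand I would couple $(\Xi_I,\Sigma_I)$ and $(\Xi_\Lambda,\Sigma_\Lambda)$ directly: sample $(\Xi_\Lambda,\Sigma_\Lambda)\sim P^{\lambda_\Lambda}\otimes Q^{\kappa_\Lambda}$, replace each surviving vertex $y_i$ by an independent $U_i\sim\alpha\vert_{\mcx_i}/\alpha(\mcx_i)$, and let $\{U_i,U_j\}$ be an edge of $\Sigma_I$ exactly when $\{y_i,y_j\}$ is an edge of $\Sigma_\Lambda$. Since $\kappa_I(U_i,U_j)=\kappa_\Lambda(t(U_i),t(U_j))=\kappa_\Lambda(y_i,y_j)$, the conditional law of $\Sigma_I$ given $\Xi_I$ is $Q^{\kappa_I}(\Xi_I,\cdot)$, and $\Xi_I\sim P^{\lambda_I}$ by \textcite[Lemma~28]{stucki2014}; hence $(\Xi_I,\Sigma_I)\sim P^{\lambda_I}\otimes Q^{\kappa_I}$ as required. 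Under this coupling the two graphs have the same cardinality and the same combinatorial edge set, so formula~\eqref{eq: dgi same cardinality} with the identity matching applies: each vertex moves by at most $r_V=\max_{i\in[n]}\sup_{x\in\mcx_i}d_V(x,y_i)$, no edges are created or destroyed, and the edge contribution is controlled by the same displacement, giving $d_{\G,i}\bigl((\Xi_I,\Sigma_I),(\Xi_\Lambda,\Sigma_\Lambda)\bigr)\leq r_V$. Taking expectations and infimising over couplings yields $W_{\G,i}(P^{\lambda_I}\otimes Q^{\kappa_I},P^{\lambda_\Lambda}\otimes Q^{\kappa_\Lambda})\leq r_V$.

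\textbf{Reducing the general bound.} For the first summand, $\Xi_I$ is a finite Gibbs process, hence fulfils Condition~\eqref{eq:condsigma}, and since $\Xi_I\sim\gibbs(\lambda_I)$ its Papangelou kernel is $L_I(\diff x\mvert\xi)=\lambda_I(x\mvert\xi)\,\alpha(\diff x)=\lambda(t(x)\mvert t(\xi))\,\alpha(\diff x)$; the target's vertex process fulfils \eqref{eq: stability cond} by assumption, so Theorem~\ref{thm: general approx result} is applicable. Three observations then cut the general bound~\eqref{eq: general approx result} down to the two stated terms. First, because $(\Xi_I,\Sigma_I)$ is itself a generalised random geometric graph, it satisfies the influence assumption (Example~\ref{ex: kappa-geom rg fulfils influence condition}), so $\tilde Q_1(\Xi_I,x;\cdot)=Q^{\kappa_I}(\Xi_I,\cdot)$ and the third term of~\eqref{eq: general approx result} vanishes. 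Second, by Remark~\ref{re: Stein factor for general process with GOSPA} we may use the Stein factors $\triangle_V h_f\le C_i B^*$ and $\triangle_E h_f\le C_E/4$ for $f\in\mcf_i$. Third, since the edges of $\Sigma_I$ are conditionally independent with $\Prob\bigl(\tilde\Sigma_{\xi,\sigma,x}(\{x,x_i\})=1\bigr)=\kappa_I(x_i,x)=\kappa_\Lambda(t(x_i),t(x))$, the conditional probabilities in the definition of $\hat v$ coincide with these marginals, whence $\norm{\hat v(\tilde\Sigma_{\xi,\sigma,x})}_1=\sum_{i=1}^{\abs\xi}\bigabs{\kappa(x_i,x)-\kappa_\Lambda(t(x_i),t(x))}$, a deterministic function of $(\xi,x)$. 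With these, the vertex term of~\eqref{eq: general approx result} is precisely $C_i B^*\,\E\bigl(\int_{\mcx}\bigabs{\lambda(x\mvert\Xi_I)-\lambda(t(x)\mvert t(\Xi_I))}\,\alpha(\diff x)\bigr)$, the second term of the claim.

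\textbf{The edge term via GNZ.} It remains to rewrite the edge contribution $\tfrac{C_E}{4}\,\E\bigl(\int_{\mcx}\sum_{y\in\Xi_I}\bigabs{\kappa(y,x)-\kappa_\Lambda(t(y),t(x))}\,\lambda(x\mvert\Xi_I)\,\alpha(\diff x)\bigr)$. Writing the inner sum as $\int_{\mcx}(\,\cdot\,)\,\Xi_I(\diff y)$ and noting that for $y\in\Xi_I$ one has $\lambda(x\mvert\Xi_I)=\lambda(x\mvert(\Xi_I-\delta_y)+\delta_y)$, I would apply the ordinary GNZ equation~\eqref{eq:gnz} to the point process $\Xi_I$ and the function $h(\eta,y)=\int_{\mcx}\bigabs{\kappa(y,x)-\kappa_\Lambda(t(y),t(x))}\,\lambda(x\mvert\eta+\delta_y)\,\alpha(\diff x)$; since the Papangelou kernel of $\Xi_I$ is $\lambda(t(y)\mvert t(\Xi_I))\,\alpha(\diff y)$, this produces exactly $\tfrac{C_E}{4}\,\E\bigl(\int_{\mcx^2}\bigabs{\kappa(x,y)-\kappa_\Lambda(t(x),t(y))}\,\lambda(x\mvert\Xi_I+\delta_y)\,\lambda(t(y)\mvert t(\Xi_I))\,\alpha^2(\diff(x,y))\bigr)$, the third term of the claim (after using symmetry of $\kappa$ and $\kappa_\Lambda$ and Tonelli). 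The two genuinely delicate points are: checking the GOSPA bookkeeping for $W_{\G,i}(P^{\lambda_I}\otimes Q^{\kappa_I},P^{\lambda_\Lambda}\otimes Q^{\kappa_\Lambda})\le r_V$ so that the edge component of the metric really stays absorbed into $r_V$ under the chosen $d_E$; and getting the conditioning in the GNZ step right, i.e.\ that it is $\lambda(x\mvert\Xi_I+\delta_y)$, not $\lambda(x\mvert\Xi_I)$, that appears after transforming the sum into an $\alpha$-integral. Neither is deep, but both are easy to get wrong.
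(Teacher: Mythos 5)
Your proposal is correct and follows essentially the same route as the paper's proof: the triangle inequality through the intermediate graph $(\Xi_I,\Sigma_I)$, the cell-resampling coupling giving the $r_V$ term via \eqref{eq: dgi same cardinality}, the application of the general Stein bound with the improved factors $C_iB^*$ and $C_E/4$ from Remark~\ref{re: Stein factor for general process with GOSPA} (with the third term vanishing since both graphs are generalised random geometric graphs and $\hat v$ reducing to $\sum_{y\in\Xi_I}\bigabs{\kappa(x,y)-\kappa_\Lambda(t(x),t(y))}$), and the final rewriting of the edge term by the GNZ equation with Papangelou kernel $\lambda(t(y)\mvert t(\Xi_I))\,\alpha(\diff y)$, which indeed produces $\lambda(x\mvert\Xi_I+\delta_y)$ exactly as you note. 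No substantive differences from the paper's argument.
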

\begin{proof}
By the triangle inequality we have 
\begin{align}\label{eq: proof of discretised approx triangle ineq}
    W_{\G,i}(P^{\lambda}\otimes Q^{\kappa},P^{\lambda_{\Lambda}}\otimes Q^{\kappa_{\Lambda}}) \leq W_{\G,i}(P^{\lambda}\otimes Q^{\kappa},P^{\lambda_I}\otimes Q^{\kappa_I}) + W_{\G,i}(P^{\lambda_I}\otimes Q^{\kappa_I},P^{\lambda_{\Lambda}}\otimes Q^{\kappa_{\Lambda}}).
\end{align}
For the first summand, we apply Theorem~\ref{thm: Poisson approx in Wasserstein} in the general case of Remark~\ref{re: Stein factor for general process with GOSPA}. Since both graphs are generalised random geometric graphs, the third term vanishes, which yields 
\begin{align*}
    &W_{\G,i}(P^{\lambda}\otimes Q^{\kappa},P^{\lambda_I}\otimes Q^{\kappa_I})\\
    &\leq C_i B^*\,\E\biggl(\int_{\mcx}\,\big\vert\lambda(x\mvert \Xi_I) -\lambda(t(x)\vert t(\Xi_I))\big\vert\, \alpha(\diff x)\biggr)\\
    &\quad + \frac14 C_E\E\biggl(\int_{\mcx}  \sum_{y\in \Xi_I} \abs{\kappa_{\Lambda}(t(x),t(y)) - \kappa(x,y)} \lambda(x\vert \Xi_I)\,\alpha(\diff x)
    \biggr)\\
    &= C_i B^*\,\E\biggl(\int_{\mcx}\,\big\vert\lambda(x\mvert \Xi_I) -\lambda(t(x)\vert t(\Xi_I))\big\vert\, \alpha(\diff x)\biggr)\\
    &\quad + \frac14 C_E\,\E\biggl(\int_{\mcx}\int_{\mcx} \abs{\kappa_{\Lambda}(t(x),t(y)) - \kappa(x,y)}\,  \lambda(x\vert \Xi_I + \delta_y)\,\lambda(t(y)\vert t(\Xi_I))\,\alpha(\diff x)\,\alpha(\diff y)  \biggr),
\end{align*}
by the GNZ equation~\eqref{eq:gnz}.

It remains to consider the second summand in \eqref{eq: proof of discretised approx triangle ineq}. 
By construction of the intermediate graph, writing $M = \abs{\Xi_{\Lambda}} = \abs{\Xi_I}$, the intermediate vertex process $\Xi_{I} = \sum_{i=1}^M \delta_{\tilde{U}_i}$ is obtained by resampling every vertex $\tilde{Y}_i$ of the discretised process $\Xi_{\Lambda} = \sum_{i=1}^M \delta_{\tilde{Y}_i}$ from the cell $\mcx_j$ whose centre $y_j$ is equal to $\tilde{Y}_i$. This implies $d_V(\tilde{Y}_i, \tilde{U}_i) \leq r_V$ and $\kappa_{\Lambda}(\tilde{Y}_i,\tilde{Y}_{i'}) = \kappa_{\Lambda}(t(\tilde{U}_i),t(\tilde{U}_{i'})) = \kappa_I(\tilde{U}_i,\tilde{U}_{i'})$. By the latter, we can couple the edge processes in such a way that $\Sigma_I(\{\tilde{U}_i,\tilde{U}_{i'}\}) = \Sigma_{\Lambda}(\{Y_i,Y_{i'}\})$. 

Thus we obtain
\begin{align*}
    W_{\G,i}(P^{\lambda_I}\otimes Q^{\kappa_I},P^{\lambda_{\Lambda}}\otimes Q^{\kappa_{\Lambda}}) 
    \leq \E[d_{\mathbb{G},i}((\Xi_I,\Sigma_I),(\Xi_{\Lambda},\Sigma_{\Lambda}))]
    \leq r_V,
\end{align*}
where the last inequality is immediately seen by \eqref{eq: dgi same cardinality}.
\end{proof}

We apply this approximation result to a generalised random geometric graph with vertices given by an inhibitory pairwise interaction processes as introduced in Section~\ref{ch: gibbs}.

\begin{corollary}
Let  $(\Xi,\Sigma)$ be an $\mathrm{RGG}(\lambda,\kappa)$ such that $\Xi$ is an inhibitory pairwise interaction process with constant activity function~$\beta$, i.e.\ $\lambda(x\mvert \xi) = \beta \prod_{y\in\xi} \varphi(x,y)$ for $\beta\in\R_+$ and $\varphi:\mcx^2\to \R_+$ with $\varphi\leq 1$. Then
\begin{align} \label{eq: discrete approx}
    W_{\G,i}(&P^\lambda\otimes Q^{\kappa},P^{\lambda_{\Lambda}}\otimes Q^{\kappa_{\Lambda}}) \leq r_V + C_iB^*\,\beta^2\, \norm{\varphi - \varphi \circ t}_1 
      + \frac14 C_E\,\beta^2\,\norm{\kappa - \kappa_{\Lambda}\circ t}_1,
\end{align}
where we define $(g \circ t) (x,y):=g(t(x),t(y))$ for any function $g \colon \mcx^2 \to \R_+$. \\
If furthermore $\varphi$ and $\kappa$ are Lipschitz continuous in both components with constants $L_V$ and $L_E$, respectively, and we have $\kappa_{\Lambda}=\kappa$, then
\begin{align} \label{eq: discrete approx if Lipschitz}
    W_{\G,i}(P^u\otimes Q^{\kappa},P^{u_{\Lambda}}\otimes Q^{\kappa}) 
    \leq \Bigl( 1 + \bigl(2 C_iB^*\,L_V + \tfrac12 C_E\,L_E \bigr) \beta^2 \alpha(\mcx)^2 \Bigr) \, r_V.
\end{align}
\end{corollary}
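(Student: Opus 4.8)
The plan is to apply Theorem~\ref{thm: discretization application} directly and then estimate its two integral terms by exploiting the product form $\lambda(x\mvert\xi)=\beta\prod_{y\in\xi}\varphi(x,y)$ together with $0\le\varphi\le1$ and $0\le\kappa\le1$. An inhibitory pairwise interaction process with constant (hence integrable) activity $\beta$ is locally stable, so it satisfies \eqref{eq: stability cond} and Theorem~\ref{thm: discretization application} applies; it then remains to bound the vertex and edge integrals appearing there.

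For the vertex term, the key tool is the elementary product-difference inequality $\bigabs{\tprod_i a_i - \tprod_i b_i}\le\tsum_i\abs{a_i-b_i}$, valid for $a_i,b_i\in[0,1]$, applied with $a_y=\varphi(x,y)$ and $b_y=\varphi(t(x),t(y))$, which gives
$$\bigabs{\lambda(x\mvert\xi) - \lambda(t(x)\mvert t(\xi))}\ \le\ \beta \sum_{y\in\xi}\bigabs{\varphi(x,y) - \varphi(t(x),t(y))}\qquad(\xi\in\mfn).$$
Integrating over $x$, taking expectations with $\xi=\Xi_I$, using Tonelli's theorem, and then applying the GNZ equation~\eqref{eq:gnz} to the \emph{intermediate} process $\Xi_I$ — whose conditional intensity is $\lambda_I(y\mvert\xi)=\lambda(t(y)\mvert t(\xi))=\beta\tprod_{z\in\xi}\varphi(t(y),t(z))\le\beta$, so that its Papangelou kernel is $L(\diff y\mvert\xi)=\lambda_I(y\mvert\xi)\,\alpha(\diff y)$ — turns the sum over $\Xi_I$ into $\int_{\mcx}\abs{\varphi(x,y)-\varphi(t(x),t(y))}\,\lambda_I(y\mvert\Xi_I)\,\alpha(\diff y)\le\beta\int_{\mcx}\abs{\varphi(x,y)-\varphi(t(x),t(y))}\,\alpha(\diff y)$. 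Hence the whole vertex integral is bounded by $\beta^2\norm{\varphi-\varphi\circ t}_1$. For the edge term, both $\lambda(x\mvert\Xi_I+\delta_y)$ and $\lambda(t(y)\mvert t(\Xi_I))$ equal $\beta$ times a product of factors in $[0,1]$, hence are $\le\beta$; pulling $\beta^2$ out of the double integral bounds it by $\beta^2\norm{\kappa-\kappa_{\Lambda}\circ t}_1$. Substituting both estimates into Theorem~\ref{thm: discretization application} yields~\eqref{eq: discrete approx}.

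To obtain~\eqref{eq: discrete approx if Lipschitz} I would specialise to $\kappa_{\Lambda}=\kappa$ and use that $d_V(x,t(x))\le r_V$ for every $x\in\mcx$ by the definition of $r_V$. Lipschitz continuity of $\varphi$ and of $\kappa$ in each argument then gives the pointwise bounds $\abs{\varphi(x,y)-\varphi(t(x),t(y))}\le 2L_V r_V$ and $\abs{\kappa(x,y)-\kappa(t(x),t(y))}\le 2L_E r_V$, whence $\norm{\varphi-\varphi\circ t}_1\le 2L_V r_V\,\alpha(\mcx)^2$ and $\norm{\kappa-\kappa\circ t}_1\le 2L_E r_V\,\alpha(\mcx)^2$. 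Inserting these into~\eqref{eq: discrete approx} and factoring out $r_V$ produces the stated bound with constant $1+\bigl(2C_i B^* L_V+\tfrac12 C_E L_E\bigr)\beta^2\alpha(\mcx)^2$.

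All of this is routine once Theorem~\ref{thm: discretization application} is available; the only points that require genuine care are invoking the GNZ equation for $\Xi_I$ rather than for $\Xi$, and keeping track of which of $\lambda$, $\lambda_I$, $\lambda_{\Lambda}$ appears in each of the three terms of Theorem~\ref{thm: discretization application}. Beyond this bookkeeping and the product-difference estimate I do not anticipate any substantial obstacle.
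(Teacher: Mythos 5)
Your proposal is correct and follows essentially the same route as the paper: apply Theorem~\ref{thm: discretization application}, bound the product difference of the conditional intensities by $\beta\sum_{y\in\Xi_I}\abs{\varphi(x,y)-\varphi(t(x),t(y))}$ (the paper obtains your product-difference inequality via a telescoping sum), convert the sum via the GNZ equation for $\Xi_I$ using $\lambda_I\leq\beta$, bound the edge-term intensities by $\beta^2$ via $\varphi\leq1$, and finish the Lipschitz case exactly as in the paper.
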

\begin{proof}
Since $\Xi$ is inhibitory with a constant and thus integrable activity function, it fulfils the stability condition~\eqref{eq: stability cond}. Hence, we can apply Theorem~\ref{thm: discretization application} and obtain by the specified form of the conditional intensity
    \begin{align}\label{eq: discretisation proof pip}
    \notag&W_{\G,i}(P^u\otimes Q^{\kappa},P^{u_{\Lambda}}\otimes Q^{\kappa_{\Lambda}}) \\
    \notag&\leq r_V + C_iB^*\, \E\bigg(\int_{\mcx}\beta\,\Bigl\vert \prod_{y\in\Xi_I}\varphi(x,y) -\prod_{y\in\Xi_I}\varphi(t(x),t(y))\Bigr\vert \, \alpha(\diff x)\biggr)\\
    &\quad + \frac14 C_E\,\E\biggl(\int_{\mcx^2} \bigabs{\kappa(x,y) - \kappa_{\Lambda}(t(x),t(y))}\,  \beta^2 \! \prod_{z\in \Xi_I+\delta_{y}}\varphi(x,z)\,\prod_{z'\in \Xi_I}\varphi(t(y),t(z'))\,\alpha^2(\diff (x,y))\biggr). 
    \end{align}
Consider the first expectation in \eqref{eq: discretisation proof pip}. For any $\xi_I = \sum_{i=1}^n \delta_{x_i}\in\mfn$ we can expand the absolute value into a telescopic sum and obtain
\begin{align*}
&\bigg\vert \prod_{y\in\xi_I}\varphi(x,y) -\prod_{y\in\xi_I}\varphi(t(x),t(y))\bigg\vert \\
&= \bigg\vert\sum_{j=1}^n\bigg(\prod_{i=1}^j\varphi(x,x_i)\prod_{i=j+1}^n\varphi(t(x),t(x_i)) -\prod_{i=1}^{j-1}\varphi(x,x_i)\prod_{i=j}^n\varphi(t(x),t(x_i))\bigg)\bigg\vert \\
&= \bigg\vert\sum_{j=1}^n \bigl( \varphi(x,x_j) - \varphi(t(x),t(x_j)) \bigr) \prod_{i=1}^{j-1}\varphi(x,x_i)\prod_{i=j+1}^n\varphi(t(x),t(x_i))\bigg\vert\\
&\leq \sum_{j=1}^n \big\vert\varphi(x,x_j) - \varphi(t(x),t(x_j))\big\vert,
\end{align*}
using $\varphi\leq 1$ for the last inequality. Applying this bound to the first expectation in \eqref{eq: discretisation proof pip} yields 
\begin{align*}
     \E\bigg(\int_{\mcx}\beta&\,\Bigl\vert \prod_{y\in\Xi_I}\varphi(x,y) -\prod_{y\in\Xi_I}\varphi(t(x),t(y))\Bigr\vert \, \alpha(\diff x)\bigg) \\
     &\leq  \E\bigg(\beta\sum_{y\in\Xi_I}\int_{\mcx} \bigl\vert\varphi(x,y) - \varphi(t(x),t(y))\bigr\vert \, \alpha(\diff x)\biggr)\\
     &\leq \beta^2 \, \int_{\mcx} \int_{\mcx} \bigl\vert\varphi(x,y) - \varphi(t(x),t(y))\bigr\vert \, \alpha(\diff x) \, \alpha(\diff y),
\end{align*}
where the last inequality follows from the GNZ equation~\eqref{eq:gnz} and $\lambda_I(x \mvert \xi) = \lambda(t(x) \mvert t(\xi)) \leq \beta$.

For the second expectation in \eqref{eq: discretisation proof pip}, we apply $\varphi\leq 1$ to obtain
\begin{align*}
    \E\biggl(\int_{\mcx^2} \bigabs{\kappa(x,y) - \kappa_{\Lambda}(t(x),t(y))}\,  \beta^2\,\prod_{z\in \Xi_I+\delta_{y}}&\varphi(x,z)\,\prod_{z'\in \Xi_I}\varphi(t(y),t(z'))\,\alpha^2(\diff (x,y))\biggr)\\
    &\leq \int_{\mcx^2} \bigabs{\kappa(x,y) - \kappa_{\Lambda}(t(x),t(y))} \, \beta^2\,\alpha^2(\diff (x,y)).
\end{align*}
This implies \eqref{eq: discrete approx} for general $\varphi \leq 1$, $\kappa$ and $\kappa_{\Lambda}$.

If in addition we have that $\varphi$ and $\kappa_{\Lambda} = \kappa$ are Lipschitz in both components with constants $L_V$ and $L_E$, respectively, we obtain
\begin{align*}
\bigabs{\varphi(x,y) - \varphi(t(x),t(y))}
&\leq \bigabs{\varphi(x,y) - \varphi(t(x),y)} + \bigabs{\varphi(t(x),y) - \varphi(t(x),t(y))}\\
&\leq L_V\big( d_V(x,t(x)) + d_V(y,t(y))\big)\leq 2 \hbit L_V \hbit r_V     
\end{align*}
and similarly
\begin{align*}
    \abs{\kappa(x,y)-\kappa(t(x),t(y))} \leq L_E \big(d_V(x,t(x)) + d_V(y,t(y)) \big) \leq 2 \hbit L_E \hbit r_V
\end{align*}
for all $x,y\in \mcx$. Applying these bounds to the general bound yields the claim.
\end{proof}

\printbibliography
\end{document}